\documentclass[12pt]{amsart}
\usepackage{fancybox}
\usepackage{shuffle}
\usepackage{amsmath}
\usepackage{amsfonts}
\usepackage{amssymb}
\usepackage[all]{xy}           
\usepackage{bm}
\usepackage{bbding}
\usepackage{txfonts}
\usepackage{amscd}
\usepackage{appendix}

\usepackage{xspace}
\usepackage[shortlabels]{enumitem}
\usepackage{ifpdf}

\ifpdf
  \usepackage[colorlinks,final,backref=page,hyperindex]{hyperref}
\else
  \usepackage[colorlinks,final,backref=page,hyperindex,hypertex]{hyperref}
\fi
\usepackage{tikz}
\usepackage[active]{srcltx}

\topmargin -.8cm \textheight 22.8cm \oddsidemargin 0cm \evensidemargin -0cm \textwidth 16.3cm

\makeatletter

\newtheorem{thm}{Theorem}[section]
\newtheorem{lem}[thm]{Lemma}
\newtheorem{cor}[thm]{Corollary}
\newtheorem{pro}[thm]{Proposition}
\newtheorem{ex}[thm]{Example}
\theoremstyle{definition}
\newtheorem{rmk}[thm]{Remark}
\newtheorem{defi}[thm]{Definition}

\newcommand{\nc}{\newcommand}
\newcommand{\delete}[1]{}

\nc{\mlabel}[1]{\label{#1}}  
\nc{\mcite}[1]{\cite{#1}}  
\nc{\mref}[1]{\ref{#1}}  
\nc{\mbibitem}[1]{\bibitem{#1}} 

\delete{
\nc{\mlabel}[1]{\label{#1}{\hfill \hspace{1cm}{\bf{{\ }\hfill(#1)}}}}
\nc{\mcite}[1]{\cite{#1}{{\em{{\ }(#1)}}}}  
\nc{\mref}[1]{\ref{#1}{{\em{{\ }(#1)}}}}  
\nc{\mbibitem}[1]{\bibitem[\em #1]{#1}} 
}

\setlength{\baselineskip}{1.8\baselineskip}

\newcommand {\emptycomment}[1]{}

\nc{\oprn}{\theta}
\nc{\Oprn}{\Theta}

\nc{\calo}{\mathcal{O}}
\nc{\oop}{$\mathcal{O}$-operator\xspace}
\nc{\oops}{$\mathcal{O}$-operators\xspace}
\nc{\mrho}{{\bm{\varrho}}}
\nc{\emk}{\mathbf{K}}
\nc{\invlim}{\displaystyle{\lim_{\longleftarrow}}\,}
\nc{\ot}{\otimes}

\newcommand{\lon }{\,\rightarrow\,}
\newcommand{\be }{\begin{equation}}
\newcommand{\ee }{\end{equation}}

\newcommand{\g}{\mathfrak g}
\newcommand{\h}{\mathfrak h}

\newcommand{\huaB}{\mathcal{B}}

\newcommand{\huaG}{\mathcal{G}}

\newcommand{\huaC}{{\mathcal{C}}}
\newcommand{\huaD}{\mathcal{D}}

\newcommand{\huaH}{\mathcal{H}}

\newcommand{\huaO}{{\mathcal{O}}}

\newcommand{\huaZ}{\mathcal{Z}}

\newcommand{\frkg}{\mathfrak g}

\newcommand{\frkL}{\mathfrak L}

\newcommand{\frkR}{\mathfrak R}

\newcommand{\frkX}{\mathfrak X}

\newcommand{\pair}[1]{\left\langle #1\right\rangle}

\newcommand{\Courant}[1]{\left\llbracket  #1\right\rrbracket }

\newcommand{\Id}{{\rm{Id}}}

\newcommand{\br}[1]{   [ \cdot,    \cdot  ]   }

\newcommand{\Hom}{\mathrm{Hom}}

\newcommand{\LP}{$\mathsf{LieRep}$~}
\newcommand{\LYP}{$\mathsf{LieYRep}$~}

\newcommand{\gl}{\mathfrak {gl}}

\newcommand{\ad}{\mathrm{ad}}

\nc{\CV}{\mathbf{C}}

\newcommand{\LYA}{Lie-Yamaguti algebra}
\newcommand{\ltp}{\Courant{\cdot,\cdot,\cdot}}

\begin{document}

\title{Cohomology and Relative Rota-Baxter-Nijenhuis structures on \LYP pairs}

\author{Jia Zhao}
\address{Jia Zhao, School of Sciences, Nantong University, Nantong 226019, Jiangsu, China}
\email{zhaojia@ntu.edu.cn}

\author{Yu Qiao*}
\address{Yu Qiao (corresponding author), School of Mathematics and Statistics, Shaanxi Normal University, Xi'an 710119, Shaanxi, China}
\email{yqiao@snnu.edu.cn}

\date{\today}

\begin{abstract}
A \LYP pair consists of a Lie-Yamaguti algebra and its representation.
In this paper, we establish the cohomology theory of \LYP pairs and characterize their linear deformations by the second cohomology group.
Then we introduce the notion of relative Rota-Baxter-Nijenhuis structures on \LYP pairs, investigate their properties, and prove that a relative Rota-Baxter-Nijenhuis structure gives rise to a pair of compatible relative Rota-Baxter operators under a certain condition.
Finally, we show the equivalence between $r$-matrix-Nijenhuis structures and Rota-Baxter-Nijenhuis structures on \LYA s.
\end{abstract}

\thanks{{\em Mathematics Subject Classification} (2020): Primary 17B38; Secondary 17B60,17A99}

\keywords{Lie-Yamaguti algebra, cohomology, relative Rota-Baxter-Nijenhuis structure, $r$-matrix-Nijenhuis structure}

\maketitle

\vspace{-1.1cm}

\tableofcontents

\allowdisplaybreaks

 \section{Introduction}

Poisson-Nijenhuis structures appeared in the work of Magri and Morosi \cite{Magri} in studying completely integrable systems. Such objects can also be found in \cite{KoMa,KoRu}. Later, Ravanpak, Rezaei-Aghdam, and Haghighatdoost exploited an $r-n$ structure on a Lie algebra, which is the infinitesimal right-invariant Poisson-Nijenhuis structure on the Lie group $G$ integrating the Lie algebra $\g$ \cite{RRH}. Namely, an $r-n$ structure on a Lie algebra $\g$ is a pair $(\pi,N)$, where $\pi\in \wedge^2\g$ is a classical $r$-matrix, and $N$ is a Nijenhuis operator on $\g$ such that some conditions are satisfied. Motivated by their works, Liu, Sheng and their collaborators studied Poisson-Nijenhuis structures on Lie algebras, associative algebras and pre-Lie algebras in \cite{HuLiuSheng,LW,LiuBaiSheng}, where they call this object the Kupershmidt-Nijenhuis structures or ${\huaO}N$-structures. In particular, they named an $r-n$ structure as an $r$-matrix-Nijenhuis structure and investigated its relation with the Rota-Baxter-Nijenhuis structures on a \LP pair in \cite{HuLiuSheng}. Based on their works, the first author, Liu, and Sheng explored the Poisson-Nijenhuis structures on $3$-$\mathsf{LieRep}$ pairs, in which Poisson-Nijenhuis structures were called relative Rota-Baxter-Nijenhuis structures \cite{Zhao-Liu}.

The notion of \LYA s, a generalization of Lie algebras and Lie triple systems, can be dated back to Nomizu's work on invariant affine connections on homogeneous spaces in 1950's (\cite{Nomizu}) and Yamaguti's work on general Lie triple systems and Lie triple algebras (\cite{Yamaguti1}). Then Yamaguti introduced its representation and established its cohomology theory in \cite{Yamaguti2,Yamaguti3} during 1950's to 1960's. Later until 21st century, Kinyon and Weinstein named this object as a \LYA ~when studying Courant algebroids in \cite{Weinstein}. \LYA s have attracted much attention in recent years. For instance, Benito, Draper, and Elduque investigated \LYA s related to simple Lie algebras of type $G_2$ \cite{B.D.E}. Afterwards, Benito, Elduque, and Mart$\acute{i}$n-Herce explored irreducible \LYA s in \cite{B.E.M1,B.E.M2}. More recently, Benito, Bremmer, and Madariaga examined orthogonal \LYA s in \cite{B.B.M}.

Baxter first introduced the notion of Rota-Baxter operators on associative algebras when studying fluctuation theory in 1960's \cite{Ba}.
A more generalized concept -- relative Rota-Baxter operators (also called $\huaO$-operators or Kupershmidt operators) on Lie algebras, was introduced by Kupershmidt in the study of classical Yang-Baxter equation, which has lots of applications in physics and mathematical physics \cite{Kupershmidt}. Kupershmidt found that a relative Rota-Baxter operator with respect to the coadjoint representation is a solution to the classical Yang-Baxter equation.
Note that a relative Rota-Baxter operator is associated to an arbitrary representation, whereas a Rota-Baxter operator is related with the adjoint representation. One can see \cite{AMM,Bai-Bellier-Guo-Ni,B.G.S,BaiRGuo} for Rota-Baxter operators on other type of algebras and see \cite{Gub} for more details about Rota-Baxter algebras.
 Note that a relative Rota-Baxter operator on a Lie algebra (an associative algebra) is a solution to the Maurer-Cartan equation of a certain graded Lie algebra (\cite{TBGS,Uchino1}), whereas a Poisson structure is also a solution to a Maurer-Cartan equation of a graded Lie algebra whose structure are precisely the Schouten-Nijenhuis bracket of multi-vector fields. Thus a relative Rota-Baxter operator on a Lie algebra (an associative algebra) are analogues of a Poisson structure.
Another kind of crucial operators--a Nijenhuis operator on a Lie algebra appeared in the study of integrable systems by Magri, Gelfand, and Dorfman \cite{Dorfman}, and then Fuchssteiner and Fokas rediscovered it and named this kind of operators as hereditary operators in \cite{FF}. In deformation theory of Lie algebras, a Nijenhuis operator can be obtained via a trivial deformation. One can see \cite{Dorfman,Liu-Sheng,NR} for more details about Nijenhuis operators on Lie algebras or even $n$-Lie algebras.

Recently, the first author and Sheng explored linear deformations, Nijenhuis operators, and relative Rota-Baxter operators on \LYA s in \cite{ShengZhao} and \cite{SZ1} respectively.
We studied cohomology and deformations of relative Rota-Baxter operators on \LYA s in \cite{ZhaoQiao}, and established bialgebra theory of \LYA s in \cite{ZQ2}. Motivated by the virtual of Poisson-Nijenhuis structures and our former works on \LYA s, we shall investigate relative Rota-Baxter-Nijenhuis structures on a \LYA ~with a representation. By a \LYP pair, denoted by $\Big((\g,\br,,\ltp),(V;\rho,\mu)\Big)$, we mean a \LYA ~$(\g,\br,,\ltp)$ with a representation $(V;\rho,\mu)$. Note that we still call a Poisson-Nijenhuis structure on a \LYP pair a relative Rota-Baxter-Nijenhuis structure in the present paper.

\medskip
{\bf Outline of the paper.} After recalling some basic definitions in Section 2, we establish the cohomology theory of \LYP pairs and utilize it to characterize linear deformations in Section 3. The corresponding complex is exactly a subcomplex of that of the associated semidirect product \LYA ~with coefficients in the adjoint representation. Then we investigate the linear deformations of \LYP pairs and introduce the notion of Nijenhuis structures $(N,S)$, where $N\in \gl(\g)$ and $S\in \gl(V)$.  A Nijenhuis structure gives rise to a Nijenhuis operator $N+S$ on the semidirect \LYA ~$(\g\oplus V,\br_{_\ltimes},\ltp_\ltimes)$ and
a trivial deformation of $(\g, \rho, \mu)$.

In Section 3, the notion of relative Rota-Baxter-Nijenhuis structures on \LYP pairs is introduced and some properties are investigated. Given linear maps $N\in \gl(\g)$ and $S\in \gl(V)$, and let $T:V\longrightarrow\g$ be a relative Rota-Baxter operator on a \LYA ~$(\g,\br,,\ltp)$ with respect to $(V;\rho,\mu)$. Then there exists a \LYA ~structure $(\br^{^T},\ltp^T)$ on $V$, which is called the sub-adjacent \LYA, and is denoted by $V^T$. Consequently, we obtain the deformed brackets $(\br^{^T_S},\ltp^T_S)$; meanwhile, we introduce linear maps $\hat\rho:\g\to \gl(V)$ and $\hat\mu:\otimes^2\g\to \gl(V)$ associated with linear maps $N$ and $S$ to obtain the brackets $(\br^{^T_{\hat\rho}},\ltp^T_{\hat\mu})$.
It is {\em not} true that $(\br^{^T_S},\ltp^T_S)$ or $(\br^{^T_{\hat\rho}},\ltp^T_{\hat\mu})$ forms a \LYA ~structure on $V$. In order to tackle this problem, we introduce the main object of this paper: {\em relative Rota-Baxter-Nijenhuis structures}. We find that deformed brackets $(\br^{^T_S},\ltp^T_S)$ and $(\br^{^T_{\hat\rho}},\ltp^T_{\hat\mu})$ are equal under the condition that the triple $(T, S, N)$ is a relative Rota-Baxter-Nijenhuis structure. Besides, the linear map $S$ is also a Nijenhuis operator on the sub-adjacent \LYA ~$(V,\br^{^T},\ltp^T)$, hence both $(\br^{^T_S},\ltp^T_S)$ and $(\br^{^T_{\hat\rho}},\ltp^T_{\hat\mu})$ are \LYA ~structures on $V$. Furthermore, we show that the composition $N\circ T$ is a relative Rota-Baxter operator on the \LYA ~$(\g,\br,,\ltp)$ with respect to $(V;\rho,\mu)$. These results are summarized in the following commutative diagram:
\[
 \xymatrix{
 (V,[\cdot,\cdot]_S^T,\Courant{\cdot,\cdot,\cdot}_S^T)\ar[d]^{S}\ar[dr]^{N \circ T} \ar[r]^{T} & (\g,[\cdot,\cdot]_N,\Courant{\cdot,\cdot,\cdot}_N)\ar[d]^{N}    \\
(V,[\cdot,\cdot]^T,\Courant{\cdot,\cdot,\cdot}^T) \ar[r]^{T} &(\g,\br,\ltp),
}
\]
where each arrows is a \LYA ~homomorphism, i.e., each $T$ is a relative Rota-Baxter operator, and both $S$ and $N$ are Nijenhuis operators.

Unlike those on Lie algebras or associative algebras, a relative Rota-Baxter-Nijenhuis structure on a \LYP pair does {\em not} give rise to a hierarchy of relative Rota-Baxter operators and what is worse,  it does {\em not} guarantee that $T$ and $N\circ T$ are compatible even though $N\circ T$ is a relative Rota-Baxter operator. 
They, however, are compatible if we impose an extra condition on a relative Rota-Baxter-Nijenhuis structure.
This phenomenon is similar to the context of $3$-$\mathsf{LieRep}$ pairs (\cite{Zhao-Liu}) and demonstrates that properties of a relative Rota-Baxter-Nijenhuis structure on ternary operations are distinguished from those on binary operations.

Finally in Section 5, we introduce the notion of relative Rota-Baxter-dual Nijenhuis structures by using dual-Nijenhuis structures, which is dual to that of Nijenhuis structures.
In fact, an $r$-matrix-Nijenhuis structure is indeed a special relative Rota-Baxter-dual Nijenhuis structure with respect to the coadjoint representation and  we show that $r$-matrix-Nijenhuis structures and Rota-Baxter-Nijenhuis structures are equivalent under certain conditions.
\emptycomment{
The paper is structured as follows.
In Section 2, we recall some notions such as \LYA s, representations, and cohomology theory.
In Section 3, we construct the cohomology theory of \LYP pairs using Yamaguti cohomology and utilize the cohomology theory to classify linear deformations. 
In Section 4, we introduce the notion of relative Rota-Baxter-Nijenhuis structures on \LYP pairs and study several properties.
Finally in Section 6, we introduce the concept of $r$-matrix-Nijenhuis structures and establish an equivalence between $r$-matrix-Nijenhuis structures and Rota-Baxter-Nijenhuis structures on \LYA s.}

\smallskip
{\bf Acknowledgements}: Qiao was partially supported by NSFC grant 11971282.

\smallskip
\section{Preliminaries}
All vector spaces in the article are assumed to be over a field $\mathbb{K}$ of characteristic $0$ and finite-dimensional.
In this section, we recall some basic notions such as Lie-Yamaguti algebras, representations and their cohomology theory.
\begin{defi}(\cite{Weinstein})\label{LY}
A {\bf Lie-Yamaguti algebra} is a vector space $\g$ equipped with a bilinear operation $[\cdot,\cdot]:\wedge^2  \mathfrak{g} \to \mathfrak{g} $ and a trilinear operation $\Courant{\cdot,\cdot,\cdot}:\wedge^2\g \otimes  \mathfrak{g} \to \mathfrak{g} $, such that the following conditions hold
\begin{eqnarray*}
~ &&\label{LY1}[[x,y],z]+[[y,z],x]+[[z,x],y]+\Courant{x,y,z}+\Courant{y,z,x}+\Courant{z,x,y}=0,\\
~ &&\Courant{[x,y],z,w}+\Courant{[y,z],x,w}+\Courant{[z,x],y,w}=0,\\
~ &&\label{LY3}\Courant{x,y,[z,w]}=[\Courant{x,y,z},w]+[z,\Courant{x,y,w}],\\
~ &&\Courant{x,y,\Courant{z,w,t}}=\Courant{\Courant{x,y,z},w,t}+\Courant{z,\Courant{x,y,w},t}+\Courant{z,w,\Courant{x,y,t}},\label{fundamental}
\end{eqnarray*}
for all $x,y,z,w,t \in \g$. We denote a Lie-Yamaguti algebra by $(\g,\br,,\ltp)$.
\end{defi}

Note that a Lie-Yamaguti algebra with $[x,y]=0$ for all $x,y\in \g$ reduces to a Lie triple system, while a Lie-Yamaguti algebra with $\Courant{x,y,z}=0$ for all $x,y,z\in \g$ reduces to a Lie algebra. 
\emptycomment{
\begin{ex}
 Let $(\frkg,[\cdot,\cdot])$ be a Lie algebra. We define $\Courant{\cdot,\cdot,\cdot
 }:\wedge^2\g\otimes \g\lon \g$ to be  $$\Courant{x,y,z}:=[[x,y],z],\quad \forall x,y, z \in \mathfrak{g}.$$  Then $(\g,[\cdot,\cdot],\Courant{\cdot,\cdot,\cdot})$ forms a Lie-Yamaguti algebra.
\end{ex}
}
The following example is taken from \cite{Nomizu}.
\begin{ex}
Let $M$ be a closed manifold \footnote{a smooth compact manifold without boundary} with an affine connection, and denote by $\frkX(M)$ the set of vector fields on $M$. For all $x,y,z\in \frkX(M)$, set
\begin{eqnarray*}
[x,y]&=&-T(x,y),\\
\Courant{x,y,z}&=&-R(x,y)z,
\end{eqnarray*}
where $T$ and $R$ are torsion tensor and curvature tensor respectively. It turns out that the triple
$ (\frkX(M),[\cdot,\cdot],\Courant{\cdot,\cdot,\cdot})$ forms an (infinite-dimensional) \LYA.
\end{ex}

\begin{defi}\cite{Takahashi}
Let $(\g,[\cdot,\cdot]_{\g},\Courant{\cdot,\cdot,\cdot}_{\g})$ and $(\h,[\cdot,\cdot]_{\h},\Courant{\cdot,\cdot,\cdot}_{\h})$ be two Lie-Yamaguti algebras. A {\bf homomorphism} from $(\g,[\cdot,\cdot]_{\g},\Courant{\cdot,\cdot,\cdot}_{\g})$ to $(\h,[\cdot,\cdot]_{\h},\Courant{\cdot,\cdot,\cdot}_{\h})$ is a linear map $\phi:\g \to \h$ preserving the Lie-Yamaguti algebra structure, i.e., for all $x,y,z\in \g,$ the following equalities hold
\begin{eqnarray*}
\phi([x,y]_{\g})&=&[\phi(x),\phi(y)]_{\h},\\
~ \phi(\Courant{x,y,z}_{\g})&=&\Courant{\phi(x),\phi(y),\phi(z)}_{\h}.
\end{eqnarray*}
\end{defi}

The notion of representations of Lie-Yamaguti algebras was given in \cite{Yamaguti2}.
\begin{defi}\label{defi:representation}
Let $(\g,[\cdot,\cdot],\Courant{\cdot,\cdot,\cdot})$ be a Lie-Yamaguti algebra. A {\bf representation} of $\g$ is a vector space $V$ endowed with a linear map $\rho:\g \to \gl(V)$ and a bilinear map $\mu:\otimes^2 \g \to \gl(V)$, which satisfies the following conditions for all $x,y,z,w \in \g$,
\begin{eqnarray*}
~&&\label{RLYb}\mu([x,y],z)-\mu(x,z)\rho(y)+\mu(y,z)\rho(x)=0,\\
~&&\label{RLYd}\mu(x,[y,z])-\rho(y)\mu(x,z)+\rho(z)\mu(x,y)=0,\\
~&&\label{RLYe}\rho(\Courant{x,y,z})=[D_{\rho,\mu}(x,y),\rho(z)],\\
~&&\label{RYT4}\mu(z,w)\mu(x,y)-\mu(y,w)\mu(x,z)-\mu(x,\Courant{y,z,w})+D_{\rho,\mu}(y,z)\mu(x,w)=0,\\
~&&\label{RLY5}\mu(\Courant{x,y,z},w)+\mu(z,\Courant{x,y,w})=[D_{\rho,\mu}(x,y),\mu(z,w)],
\end{eqnarray*}
where $D_{\rho,\mu}$ is given by
\begin{eqnarray}
~ &&\label{rep} D_{\rho,\mu}(x,y)=\mu(y,x)-\mu(x,y)+[\rho(x),\rho(y)]-\rho([x,y]),\quad \forall x,y\in \g.
\end{eqnarray}
It is easy to see that $D_{\rho,\mu}$ is skew-symmetric. We denote a representation of $\g$ by $(V;\rho,\mu)$. In the sequel, we write $D_{\rho,\mu}$ as $D$ for short without confusion.
\end{defi}

The notion of representations of \LYA s is also a natural generalization of Lie algebras and Lie triple systems.
\emptycomment{
\begin{rmk}\label{rmk:rep}
Let $(\g,[\cdot,\cdot],\Courant{\cdot,\cdot,\cdot})$ be a Lie-Yamaguti algebra and $(V;\rho,\mu)$ its representation. If $\rho=0$ and the Lie-Yamaguti algebra $\g$ reduces to a Lie tripe system $(\g,\Courant{\cdot,\cdot,\cdot})$,  then $(V;\mu)$  is a representation of the Lie triple systems $(\g,\Courant{\cdot,\cdot,\cdot})$; If $\mu=0$, $D_{\rho,\mu}=0$ and the Lie-Yamaguti algebra $\g$ reduces to a Lie algebra $(\g,[\cdot,\cdot])$, then $(V;\rho)$ is a representation  of the Lie algebra $(\g,[\cdot,\cdot])$.
\end{rmk}}
By a direct computation, we have the following proposition.

\begin{pro}
If $(V;\rho,\mu)$ is a representation of a Lie-Yamaguti algebra $(\g,[\cdot,\cdot],\Courant{\cdot,\cdot,\cdot})$. Then we have the following equalities
\begin{eqnarray*}
\label{RLYc}&&D([x,y],z)+D([y,z],x)+D([z,x],y)=0;\\
\label{RLY5a}&&D(\Courant{x,y,z},w)+D(z,\Courant{x,y,w})=[D(x,y),D(z,w)];\\
~ &&\mu(\Courant{x,y,z},w)=\mu(x,w)\mu(z,y)-\mu(y,w)\mu(z,x)-\mu(z,w)D(x,y),\label{RLY6}
\end{eqnarray*}
for all $x,y,z,w\in \g$.
\end{pro}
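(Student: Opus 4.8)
The plan is to establish all three identities by direct computation, substituting the definition \eqref{rep} of $D$ and repeatedly invoking the five defining relations of a representation together with the Lie-Yamaguti axioms of Definition \ref{LY}; no idea beyond careful bookkeeping is required. Throughout I write cyclic sums over $(x,y,z)$ as $\sum_{\mathrm{cyc}}$ and freely use the Jacobi identity and the Leibniz (derivation) property of inner commutators in $\gl(V)$.

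For the first identity, I expand each of $D([x,y],z)$, $D([y,z],x)$, $D([z,x],y)$ via $D(u,v)=\mu(v,u)-\mu(u,v)+[\rho(u),\rho(v)]-\rho([u,v])$. The terms $\mu([x,y],z)$ are rewritten using $\mu([x,y],z)=\mu(x,z)\rho(y)-\mu(y,z)\rho(x)$, and the terms $\mu(z,[x,y])$ using $\mu(x,[y,z])=\rho(y)\mu(x,z)-\rho(z)\mu(x,y)$; after collecting, these $\mu$-contributions assemble into $\sum_{\mathrm{cyc}}[\rho(x),\mu(z,y)-\mu(y,z)]$. For the purely $\rho$-valued part, I use the first Lie-Yamaguti axiom to replace $\sum_{\mathrm{cyc}}[[x,y],z]$ by $-\sum_{\mathrm{cyc}}\Courant{x,y,z}$, then apply $\rho(\Courant{x,y,z})=[D(x,y),\rho(z)]$ and expand $D(x,y)$ once more; the triple commutator $\sum_{\mathrm{cyc}}[[\rho(x),\rho(y)],\rho(z)]$ vanishes by Jacobi in $\gl(V)$, and the remaining commutators cancel exactly against the $\mu$-contributions above, giving $0$.

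For the second identity I expand the left-hand side by the definition of $D$ and sort the result into three groups. The pure-$\mu$ terms $\mu(w,\Courant{x,y,z})+\mu(\Courant{x,y,w},z)$ and $-\mu(\Courant{x,y,z},w)-\mu(z,\Courant{x,y,w})$ are treated by the relation $\mu(\Courant{x,y,z},w)+\mu(z,\Courant{x,y,w})=[D(x,y),\mu(z,w)]$ applied twice, once with $z$ and $w$ interchanged, yielding $[D(x,y),\mu(w,z)-\mu(z,w)]$. The commutator terms $[\rho(\Courant{x,y,z}),\rho(w)]+[\rho(z),\rho(\Courant{x,y,w})]$ collapse to $[D(x,y),[\rho(z),\rho(w)]]$ after applying $\rho(\Courant{x,y,z})=[D(x,y),\rho(z)]$ and using that $[D(x,y),\cdot]$ is a derivation of the commutator. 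Finally $-\rho([\Courant{x,y,z},w])-\rho([z,\Courant{x,y,w}])$ reduces, via the Lie-Yamaguti axiom $\Courant{x,y,[z,w]}=[\Courant{x,y,z},w]+[z,\Courant{x,y,w}]$ followed by $\rho(\Courant{x,y,[z,w]})=[D(x,y),\rho([z,w])]$, to $-[D(x,y),\rho([z,w])]$. Summing the three groups and recognizing the definition of $D(z,w)$ inside the bracket produces exactly $[D(x,y),D(z,w)]$.

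The third identity is the quickest: I first use $\mu(\Courant{x,y,z},w)=[D(x,y),\mu(z,w)]-\mu(z,\Courant{x,y,w})$, and then eliminate $\mu(z,\Courant{x,y,w})$ by the relation $\mu(x,\Courant{y,z,w})=\mu(z,w)\mu(x,y)-\mu(y,w)\mu(x,z)+D(y,z)\mu(x,w)$ under the relabeling that moves the ternary bracket into the correct slot; the two $D(x,y)\mu(z,w)$ contributions cancel and what remains is precisely $\mu(x,w)\mu(z,y)-\mu(y,w)\mu(z,x)-\mu(z,w)D(x,y)$. I expect the only genuine obstacle to be the first identity, where the cyclic bookkeeping and the correct pairing of the $\mu$- and $\rho$-contributions must be tracked with care; the second and third are essentially forced once one observes that the swapped form of $\mu(\Courant{x,y,z},w)+\mu(z,\Courant{x,y,w})=[D(x,y),\mu(z,w)]$ and the Leibniz property of $[D(x,y),\cdot]$ are at one's disposal.
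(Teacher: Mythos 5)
Your proposal is correct: each of the three identities follows exactly as you describe, using the definition of $D$, the five defining relations of a representation, and the Lie-Yamaguti axioms (the cancellations you claim in the cyclic sum, the swapped form of the fifth relation, and the Leibniz property of $[D(x,y),\cdot]$ all check out). The paper offers no details beyond asserting the result "by a direct computation," and your argument is precisely that computation, so the approaches coincide.
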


\begin{ex}\label{ad}
Let $(\g,[\cdot,\cdot],\Courant{\cdot,\cdot,\cdot})$ be a Lie-Yamaguti algebra. We define linear maps $\ad:\g \to \gl(\g)$ and $\frkR :\otimes^2\g \to \gl(\g)$ to be $x \mapsto \ad_x$ and $(x,y) \mapsto \mathfrak{R}_{x,y}$ respectively, where $\ad_xz=[x,z]$ and $\mathfrak{R}_{x,y}z=\Courant{z,x,y}$ for all $z \in \g$. Then $(\ad,\mathfrak{R})$ forms a representation of $\g$ on itself, where $\frkL:= D_{\ad,\frkR}$ is given by
$$\frkL_{x,y}z=\Courant{x,y,z}, \quad \forall z\in \g.$$
The representation $(\g;\ad,\frkR)$ is called the {\bf adjoint representation}.
\end{ex}
\emptycomment{
In the following, we give the notion of homomorphisms between representations of a Lie-Yamaguti algebra.
\begin{defi}
Let $(\g,[\cdot,\cdot],\Courant{\cdot,\cdot,\cdot})$ be a Lie-Yamaguti algebra and $(V;\rho_V,\mu_V)$ and $(W;\rho_W,\mu_W)$ be its two representations. A {\bf homomorphism} from $(V;\rho_V,\mu_V)$ and $(W;\rho_W,\mu_W)$ is a pair $(\phi_\g,\psi)$ consisting of a Lie-Yamaguti isomorphism $\phi_\g:\g \to \g$ and a linear map $\psi:V\to W$ such that for all $x,y\in \g,~v\in V,$
}

Representations of a Lie-Yamaguti algebra can be characterized by the semidirect product Lie-Yamaguti algebras.

\begin{pro}{\rm(\cite{Zhang1})}
Let $(\g,[\cdot,\cdot],\Courant{\cdot,\cdot,\cdot})$ be a Lie-Yamaguti algebra and $V$ a vector space. Suppose that $\rho:\g \to \gl(V)$ and $\mu:\otimes^2 \g \to \gl(V)$ are linear maps. Then $(V;\rho,\mu)$ is a representation of $(\g,[\cdot,\cdot],\Courant{\cdot,\cdot,\cdot})$ if and only if there is a Lie-Yamaguti algebra structure $([\cdot,\cdot]_{\ltimes},\Courant{\cdot,\cdot,\cdot}_{\ltimes})$ on the direct sum $\g \oplus V$ which is defined to be
\begin{eqnarray*}
\label{semi1}[x+u,y+v]_{\ltimes}&=&[x,y]+\rho(x)v-\rho(y)u,\\
\label{semi2}~\Courant{x+u,y+v,z+w}_{\ltimes}&=&\Courant{x,y,z}+D(x,y)w+\mu(y,z)u-\mu(x,z)v,
\end{eqnarray*}
for all $x,y,z \in \g, ~u,v,w \in V$. This Lie-Yamaguti algebra $(\g \oplus V,[\cdot,\cdot]_{\ltimes},\Courant{\cdot,\cdot,\cdot}_{\ltimes})$ is called the {\bf semidirect product Lie-Yamaguti algebra}, and is denoted by $\g \ltimes V$.
\end{pro}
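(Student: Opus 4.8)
The plan is to verify by direct substitution that the prescribed operations $[\cdot,\cdot]_{\ltimes}$ and $\Courant{\cdot,\cdot,\cdot}_{\ltimes}$ satisfy the four defining identities of Definition~\ref{LY} if and only if the five conditions of Definition~\ref{defi:representation} hold. First I would record that both operations are well defined on the relevant (anti)symmetric tensor spaces: $[\cdot,\cdot]_{\ltimes}$ is skew-symmetric on $\wedge^2(\g\oplus V)$ and $\Courant{\cdot,\cdot,\cdot}_{\ltimes}$ is skew-symmetric in its first two slots. Both facts follow at once from $[\cdot,\cdot]$ being a bracket on $\wedge^2\g$, from $\Courant{\cdot,\cdot,\cdot}$ being skew in its first two arguments, and from the skew-symmetry of $D$ noted after~\eqref{rep}.

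The core of the argument is that, since all structure maps are multilinear, it suffices to test each identity of Definition~\ref{LY} on homogeneous arguments, each lying in $\g$ or in $V$. When every argument lies in $\g$, each identity reduces to the corresponding identity of the given Lie-Yamaguti algebra $(\g,[\cdot,\cdot],\Courant{\cdot,\cdot,\cdot})$ and so holds automatically. When at least two arguments lie in $V$, every term vanishes, since $[u,v]_{\ltimes}=0$, $\Courant{u,v,\cdot}_{\ltimes}=0$, and $\Courant{u,\cdot,w}_{\ltimes}=0$ for $u,v,w\in V$; thus these cases are trivially satisfied. Hence the whole content is carried by the arguments with exactly one entry in $V$, and for those the identity holds if and only if its $V$-component does.

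I would then compute the $V$-component of each identity by sliding the single $V$-entry through all available slots, using the elementary evaluations $[u,x]_{\ltimes}=-\rho(x)u$, $\Courant{u,x,y}_{\ltimes}=\mu(x,y)u$, $\Courant{x,u,y}_{\ltimes}=-\mu(x,y)u$, and $\Courant{x,y,u}_{\ltimes}=D(x,y)u$ for $u\in V$ and $x,y\in\g$. The outcome is a clean dictionary: the first identity of Definition~\ref{LY} collapses to the very definition~\eqref{rep} of $D$ and imposes nothing new; the second identity yields the first of the five conditions; the third identity yields the second and third conditions (the two $\mu$-$\rho$ relations and the relation $\rho(\Courant{x,y,z})=[D(x,y),\rho(z)]$); and the fundamental identity yields the fourth and fifth conditions. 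Reading each of these computations backwards establishes the converse, so the two statements are equivalent.

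The only genuinely laborious step is the fundamental identity, which involves five arguments: one must place the $V$-entry in each of the five positions and reassemble the resulting operator identities, repeatedly invoking the definition~\eqref{rep} of $D$ and its skew-symmetry to convert expressions such as $D(x,y)\mu(z,w)-\mu(z,w)D(x,y)$ into the bracketed form $[D(x,y),\mu(z,w)]$ appearing in Definition~\ref{defi:representation}. I expect this bookkeeping---in particular tracking the sign from the second-slot evaluation $\Courant{x,u,y}_{\ltimes}=-\mu(x,y)u$ and from the skew-symmetry of $D$---to be the main source of error, whereas the binary and cubic identities are short. No conceptual difficulty arises beyond this; the equivalence is ultimately a term-by-term matching of $V$-components with the representation axioms.
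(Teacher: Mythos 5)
The paper itself offers no argument for this proposition: it is quoted from \cite{Zhang1} without proof, so your proposal can only be judged on its own terms. Your overall strategy is the standard and correct one: reduce by multilinearity to homogeneous arguments, observe that every term with two or more entries from $V$ vanishes, note that the all-$\g$ case is just the Lie-Yamaguti structure of $\g$, and match the one-$V$-entry cases against the axioms of Definition~\ref{defi:representation}. Your elementary evaluations $[u,x]_{\ltimes}=-\rho(x)u$, $\Courant{u,x,y}_{\ltimes}=\mu(x,y)u$, $\Courant{x,u,y}_{\ltimes}=-\mu(x,y)u$, $\Courant{x,y,u}_{\ltimes}=D(x,y)u$ are all correct, as are the assignments: the first identity with one $V$-entry is exactly \eqref{rep}, the second identity (with the $V$-entry in one of the first three slots) gives the first axiom, the third identity gives the second and third axioms, and the fundamental identity (with the $V$-entry in one of the first four slots) gives the fourth and fifth axioms.

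However, your dictionary is incomplete, and the omission is precisely where the ``if'' direction needs an extra step. Two of the one-$V$-entry cases do \emph{not} reduce to any of the five axioms: placing the $V$-entry in the last slot of the second identity yields
\[
D([x,y],z)+D([y,z],x)+D([z,x],y)=0,
\]
and placing it in the last slot of the fundamental identity yields
\[
D(\Courant{x,y,z},w)+D(z,\Courant{x,y,w})=[D(x,y),D(z,w)].
\]
Neither of these appears among the five conditions of Definition~\ref{defi:representation}, so ``reading the computations backwards'' does not dispose of them: to prove that a representation makes $\g\oplus V$ a Lie-Yamaguti algebra, you must additionally show that these two $D$-identities are consequences of the five axioms, the definition \eqref{rep} of $D$, and the Lie-Yamaguti identities of $\g$. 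That derivation is exactly the content of the unlabeled proposition the paper records ``by a direct computation'' immediately after Definition~\ref{defi:representation}, and it is not a triviality --- it is presumably why that proposition is stated at all. The converse direction of your argument (the semidirect product being Lie-Yamaguti forces the five axioms) is unaffected, since there one merely selects the cases producing the axioms. So the gap is real but entirely fixable: prove, or invoke, the two derived $D$-identities before asserting the forward implication.
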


Cohomology theory on Lie-Yamaguti algebras was founded in \cite{Yamaguti2}. Let $(V;\rho,\mu)$ be a representation of a Lie-Yamaguti algebra $(\g,[\cdot,\cdot],\Courant{\cdot,\cdot,\cdot})$, and denote the set of $p$-cochains by $C^p_{\rm LieY}(\g,V)~(p \geqslant 1)$, where
\begin{eqnarray*}
C^{n+1}_{\rm LieY}(\g,V)\triangleq
\begin{cases}
\Hom(\underbrace{\wedge^2\g\otimes \cdots \otimes \wedge^2\g}_n,V)\times \Hom(\underbrace{\wedge^2\g\otimes\cdots\otimes\wedge^2\g}_{n}\otimes\g,V), & \forall n\geqslant 1,\\
\Hom(\g,V), &n=0.
\end{cases}
\end{eqnarray*}

The coboundary map $\delta:C^p_{\rm LieY}(\g,V)\longrightarrow C^{p+1}_{\rm LieY}(\g,V)$ is given as follows:
\begin{itemize}
\item If $n\geqslant 1$, for any $(f,g)\in C^{n+1}_{\rm LieY}(\g,V)$, the coboundary map
$$\delta=(\delta_{\rm I},\delta_{\rm II}):C^{n+1}_{\rm LieY}(\g,V)\to C^{n+2}_{\rm LieY}(\g,V),$$
$$\qquad \qquad\qquad \qquad\qquad \quad (f,g)\mapsto\Big(\delta_{\rm I}(f,g),\delta_{\rm II}(f,g)\Big),$$
 is given by
\begin{eqnarray}
~\nonumber &&\Big(\delta_{\rm I}(f,g)\Big)(\frkX_1,\cdots,\frkX_{n+1})\\
~\label{cohomo1} &=&(-1)^n\Big(\rho(x_{n+1})g(\frkX_1,\cdots,\frkX_n,y_{n+1})-\rho(y_{n+1})g(\frkX_1,\cdots,\frkX_n,x_{n+1})\\
~\nonumber &&-g(\frkX_1,\cdots,\frkX_n,[x_{n+1},y_{n+1}])\Big)\\
~\nonumber &&+\sum_{k=1}^{n}(-1)^{k+1}D_{\rho,\mu}(\frkX_k)f(\frkX_1,\cdots,\hat{\frkX_k},\cdots,\frkX_{n+1})\\
~\nonumber &&+\sum_{1\leqslant k<l\leqslant n+1}(-1)^{k}f(\frkX_1,\cdots,\hat{\frkX_k},\cdots,\frkX_k\circ\frkX_l,\cdots,\frkX_{n+1}),\\
~ \nonumber&&\\
~\nonumber &&\Big(\delta_{\rm II}(f,g)\Big)(\frkX_1,\cdots,\frkX_{n+1},z)\\
~\label{cohomo2}&=&(-1)^n\Big(\mu(y_{n+1},z)g(\frkX_1,\cdots,\frkX_n,x_{n+1})-\mu(x_{n+1},z)g(\frkX_1,\cdots,\frkX_n,y_{n+1})\Big)\\
~\nonumber &&+\sum_{k=1}^{n+1}(-1)^{k+1}D_{\rho,\mu}(\frkX_k)g(\frkX_1,\cdots,\hat{\frkX_k},\cdots,\frkX_{n+1},z)\\
~\nonumber &&+\sum_{1\leqslant k<l\leqslant n+1}(-1)^kg(\frkX_1,\cdots,\hat{\frkX_k},\cdots,\frkX_k\circ\frkX_l,\cdots,\frkX_{n+1},z)\\
~\nonumber &&+\sum_{k=1}^{n+1}(-1)^kg(\frkX_1,\cdots,\hat{\frkX_k},\cdots,\frkX_{n+1},\Courant{x_k,y_k,z}),
\end{eqnarray}
where $\frkX_i=x_i\wedge y_i\in\wedge^2\g~(i=1,\cdots,n+1),~z\in \g$, and the notation $\frkX_k\circ\frkX_l$ means that
$$\frkX_k\circ\frkX_l:=\Courant{x_k,y_k,x_l}\wedge y_l+x_l\wedge\Courant{x_k,y_k,y_l}.$$
\item If $n=0$, for any element $f \in C^1_{\rm LieY}(\g,V)$, the coboundary map
$$\delta:C^1_{\rm LieY}(\g,V)\to C^2_{\rm LieY}(\g,V),$$
$$\qquad \qquad \qquad f\mapsto \Big(\delta_I(f),\delta_{II}(f)\Big),$$
is given by
\begin{eqnarray}
\label{1cochain}\Big(\delta_{\rm I}(f)\Big)(x,y)&=&\rho(x)f(y)-\rho(y)f(x)-f([x,y]),\\
~ \label{2cochain}\Big(\delta_{\rm II}(f)\Big)(x,y,z)&=&D_{\rho,\mu}(x,y)f(z)+\mu(y,z)f(x)-\mu(x,z)f(y)-f(\Courant{x,y,z}),\quad \forall x,y, z\in \g.
\end{eqnarray}
\end{itemize}

\begin{pro}{\rm (\cite{Yamaguti2})}
 With the notations above, for any $f\in C^1_{\rm LieY}(\g,V)$, we have
 \begin{eqnarray*}
 \delta_{\rm I}\Big(\delta_{\rm I}(f)),\delta_{\rm II}(f)\Big)=0\quad {\rm and} \quad\delta_{\rm II}\Big(\delta_{\rm I}(f)),\delta_{\rm II}(f)\Big)=0.
 \end{eqnarray*}
 Moreover, for all $(f,g)\in C^p_{\rm LieY}(\g,V)~(p\geqslant 2)$, we have
  \begin{eqnarray*}
  \delta_{\rm I}\Big(\delta_{\rm I}(f,g)),\delta_{\rm II}(f,g)\Big)=0\quad{\rm and} \quad \delta_{\rm II}\Big(\delta_{\rm I}(f,g)),\delta_{\rm II}(f,g)\Big) =0.
  \end{eqnarray*}
 In some literature, this conclusion is denote by $\delta\circ\delta=0.$
  \end{pro}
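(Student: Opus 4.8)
The plan is to verify $\delta\circ\delta=0$ by direct computation, treating the two components $\delta_{\rm I}$ and $\delta_{\rm II}$ separately and exploiting the structural identities already recorded. First I would dispose of the base case, i.e.\ show $\delta_{\rm I}\big(\delta_{\rm I}(f),\delta_{\rm II}(f)\big)=0$ and $\delta_{\rm II}\big(\delta_{\rm I}(f),\delta_{\rm II}(f)\big)=0$ for $f\in C^1_{\rm LieY}(\g,V)$. Here I substitute the $1$-cochain formulas \eqref{1cochain} and \eqref{2cochain} into the $n=1$ instances of \eqref{cohomo1} and \eqref{cohomo2}, expand, and collect terms. The resulting expressions are built entirely from $\rho$, $\mu$, $D$, and the two brackets applied to $f$; every monomial should cancel against exactly one other once I invoke the five defining relations of a representation in Definition \ref{defi:representation}, together with the skew-symmetry of $D$ and the three derived identities of the preceding Proposition. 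This base case serves as the template for the general step.

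For $p\geq 2$ I would apply $\delta$ to $(h_1,h_2):=\big(\delta_{\rm I}(f,g),\delta_{\rm II}(f,g)\big)$ and organize the many resulting terms by their \emph{type} rather than expanding blindly. They fall into three families. Family (a) consists of terms in which two coefficient operators act in succession; these are controlled by the homomorphism property $D(\frkX_k\circ\frkX_l)=[D(\frkX_k),D(\frkX_l)]$ recorded in the preceding Proposition, which intertwines the operation $\frkX_k\circ\frkX_l$ on $\wedge^2\g$ with the commutator on $\gl(V)$. Family (b) consists of the terms produced by the substitutions $\frkX_k\circ\frkX_l$, where two nested $\circ$-operations must recombine; this is governed by the fundamental identity of Definition \ref{LY}, which makes $\Courant{x,y,\cdot}$ a derivation of the ternary bracket and hence yields the Leibniz relation $\frkX\circ(\frkY\circ\frkZ)=(\frkX\circ\frkY)\circ\frkZ+\frkY\circ(\frkX\circ\frkZ)$ on $\wedge^2\g$. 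Family (c) gathers the degree-lowering terms $g(\dots,[x,y])$ and $g(\dots,\Courant{x_k,y_k,z})$, controlled by the remaining representation axioms. In each family I would show that the signs $(-1)^{k+1}$, $(-1)^k$ and the index ranges $1\leq k<l\leq n+1$ arrange themselves exactly as in the Chevalley--Eilenberg/Leibniz pattern, so that each term has a unique cancelling partner.

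The conceptual reason behind these cancellations, which I would use as a guide and may invoke as an alternative, is that the operation $\frkX\circ\frkY$ endows $\wedge^2\g$ with a Leibniz algebra structure for which $\g$ and $V$ are modules (via $\frkL$ and $D$ respectively); the Yamaguti complex is then a variant of the associated Chevalley--Eilenberg complex carrying one extra $\g$-slot, and $\delta^2=0$ is the manifestation of the Jacobi/Leibniz identity at the cochain level. Equivalently, one may organize the proof through the semidirect product Proposition: since $(V;\rho,\mu)$ is a representation precisely when $\g\ltimes V$ is a Lie-Yamaguti algebra, the complex $C^\bullet_{\rm LieY}(\g,V)$ realizes as a subcomplex of the adjoint complex of $\g\ltimes V$, so that the two assertions for a general representation follow from the single case of an adjoint representation.

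The main obstacle I anticipate lies in family (b): verifying that the doubly-nested $\circ$-terms together with the cross-terms linking $\delta_{\rm I}$ with $\delta_{\rm II}$ cancel in the correct pairs, the point being that the first component of $\delta(h_1,h_2)$ receives contributions from $h_2$ and vice versa. Matching signs and index ranges consistently across the two stages is where an error is most likely to creep in, and it is precisely here that the fundamental identity of Definition \ref{LY}, rather than the simpler Lie-Yamaguti axioms, is indispensable.
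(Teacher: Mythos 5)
The first thing to note is that the paper contains no proof of this proposition to compare you against: it is imported from Yamaguti's work with the citation \cite{Yamaguti2} and used as a black box. Your proposal therefore has to be judged on its own merits. Your main route --- expanding $\delta\circ\delta$ and cancelling terms in pairs --- is the standard (and essentially the only) proof, and you have correctly isolated the identities that drive the cancellation: the five axioms of Definition \ref{defi:representation}, the skew-symmetry of $D$, the derived identity $D(\Courant{x,y,z},w)+D(z,\Courant{x,y,w})=[D(x,y),D(z,w)]$, which in your notation is exactly $D(\frkX\circ\frkY)=[D(\frkX),D(\frkY)]$, and the Leibniz property $\frkX\circ(\frkY\circ\frkZ)=(\frkX\circ\frkY)\circ\frkZ+\frkY\circ(\frkX\circ\frkZ)$ on $\wedge^2\g$. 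The latter does indeed follow from the fundamental identity of Definition \ref{LY} (one also needs the third axiom, that $\Courant{x,y,\cdot}$ is a derivation of $[\cdot,\cdot]$, to handle the terms $g(\cdots,[x_{n+1},y_{n+1}])$), so your three-family organization is a sound plan, with the usual caveat that for a computation of this type the sign and index bookkeeping \emph{is} the remaining content.

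One caution about the route you offer as an ``equivalent'' organization: as a standalone proof it is circular. The claim $\delta\circ\delta=0$ for the adjoint complex $C^\bullet_{\rm LieY}(\g\ltimes V,\g\ltimes V)$ is itself an instance of the proposition being proved (take the representation to be the adjoint representation $(\g\oplus V;\ad,\frkR)$ of the semidirect product), so embedding $C^\bullet_{\rm LieY}(\g,V)$ as a subcomplex only reduces general coefficients to adjoint coefficients; the direct computation of your families (a)--(c) must still be carried out in the adjoint case. It is worth observing that the paper runs this very reduction in the \emph{opposite} direction: in the theorem of Section 3.1, $\Delta\circ\Delta=0$ for the complex $(\huaC^\bullet(\g,\rho,\mu),\Delta)$ is deduced by realizing it as a subcomplex of the adjoint complex of $\g\ltimes V$, with the present proposition of Yamaguti taken as input. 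So your reduction is legitimate and mirrors the paper's own technique, but it localizes the work rather than eliminating it; the hands-on verification in your first two paragraphs is what actually proves the statement.
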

\emptycomment{
\begin{defi}
With the above notations, let $(f,g)$ in $C^p_{\rm LieY}(\g,V))$ (resp. $f\in C^1_{\rm LieY}(\g,V)$ for $p=1$) be a $p$-cochain. If it satisfies $\delta(f,g)=0$ (resp. $\delta(f)=0$), then it is called a $p$-cocycle. If there exists $(h,s)\in C^{p-1}_{\rm LieY}(\g,V)$,~(resp. $t\in C^1(\g,V)$, if $p=2$) such that $(f,g)=\delta(h,s)$~(resp. $(f,g)=\delta(t)$), then it is called a $p$-coboundary ($p\geqslant 2$). The set of $p$-cocycles and that of $p$-coboundaries is denoted by $Z^p_{\rm LieY}(\g,V)$ and $B^p_{\rm LieY}(\g,V)$ respectively. The resulting $p$-cohomology group is defined to be the factor space
$$H^p_{\rm LieY}(\g,V)=Z^p_{\rm LieY}(\g,V)/B^p_{\rm LieY}(\g,V)~(p\geqslant2).$$ In particular, we have
$$H^1_{\rm LieY}(\g,V)=\{f\in C^1_{\rm LieY}(\g,V):\delta (f)=0\}.$$
\end{defi}}

\section{Cohomology and Linear deformations of \LYP pairs}
In this section, we establish the cohomology theory of \LYP pairs, and use this type of cohomology to classify linear deformations of \LYP pairs.

\medskip
\subsection{Cohomology of \LYP pairs}

Recall that in this paper, we use a terminology \LYP pair to denote a pair $\Big((\g,\br,,\ltp),(V;\rho,\mu)\Big)$, where $(\g,\br,,\ltp)$ is a \LYA, and $(V;\rho,\mu)$ is a representation of $\g$. We also denote a \LYP pair by $(\g,\rho,\mu)$ without ambiguity.

\medskip
For the moment, let $\g$ and $V$ be vector spaces, where elements in $\g$ are denoted by $x,y,z,x_i,y_i$ and those in $V$ by $u,v,w,u_i,v_i$. Denote by $\huaC^\bullet(\g,\rho,\mu)=\oplus_{n=0}^\infty \huaC^n(\g,\rho,\mu)$, where $\huaC^0(\g,\rho,\mu)$ is defined to be $0$, $\huaC^1(\g,\rho,\mu)$ is defined to be $\gl(\g)\times\gl(V)$, and for $n\geqslant1$,
{\footnotesize
\begin{eqnarray*}
~ &&\huaC^{n+1}(\g,\rho,\mu)\\
~&:=&\Big(\Hom(\underbrace{\wedge^2\g\otimes\cdots\otimes \wedge^2\g}_n,\g)\times\Hom(\underbrace{\wedge^2\g\otimes\cdots\otimes \wedge^2\g}_n\otimes\g,\g)\Big)\\
~ &&\oplus\Big(\Hom(\underbrace{\wedge^2\g\otimes\cdots\otimes (\g\wedge V)}_n,V)\times\Hom(\underbrace{\wedge^2\g\otimes\cdots\otimes \wedge^2\g}_n\otimes V,V)\Big)\\
~ &&\oplus\Big(\oplus_{i=1}^{n-1}\Hom(\underbrace{\wedge^2\g\otimes\cdots\otimes(\g\wedge V)\otimes\cdots\wedge^2\g }_n,V)\times\Big(\oplus_{j=1}^n\Hom(\underbrace{\wedge^2\g\otimes\cdots\otimes(\g\wedge V)\otimes\cdots \wedge^2\g}_n\otimes \g,V)\Big)\Big),
\end{eqnarray*}}
where $\g\wedge V$ occurring in the third term is at the $i$-th position.

Assume that $n\geqslant1$ now, and $(f,g)\in \huaC^{n+1}(\g,\rho,\mu)$.
\begin{itemize}
\item[\rm(a)] For a pair of given maps
$$(f,g)\in \Hom(\underbrace{\wedge^2\g\otimes\cdots\otimes \wedge^2\g}_n,\g)\times\Hom(\underbrace{\wedge^2\g\otimes\cdots\otimes \wedge^2\g}_n\otimes\g,\g),$$
 we define a pair of linear maps $(\hat f,\hat g)\in C_{\rm LieY}^{n+1}(\g\oplus V,\g\oplus V)$ to be
\begin{eqnarray*}
\hat f\Big((x_1,u_1)\wedge(y_1,v_1),\cdots,(x_n,u_n)\wedge(y_n,v_n)\Big)&=&\Big(f(x_1\wedge y_1,\cdots,x_n\wedge y_n),0\Big),\\
\hat g\Big((x_1,u_1)\wedge(y_1,v_1),\cdots,(x_n,u_n)\wedge(y_n,v_n),(x_{n+1},u_{n+1})\Big)&=&\Big(g(x_1\wedge y_1,\cdots,x_n\wedge y_n,x_{n+1}),0\Big).
\end{eqnarray*}
\item[(b)] For a pair of given maps
 $$(f,g)\in \Hom(\underbrace{\wedge^2\g\otimes\cdots\otimes (\g\wedge V)}_n,V)\times\Hom(\underbrace{\wedge^2\g\otimes\cdots\otimes \wedge^2\g}_n\otimes V,V),$$
we define a pair of linear maps $(\hat f,\hat g)\in C_{\rm LieY}^{n+1}(\g\oplus V,\g\oplus V)$ to be
\begin{eqnarray*}
~ &&\hat f\Big((x_1,u_1)\wedge(y_1,v_1),\cdots,(x_n,u_n)\wedge(y_n,v_n)\Big)\\
&=&\Big(0,f(x_1\wedge y_1,\cdots,x_{n-1}\wedge y_{n-1},(x_n,v_n))-f(x_1\wedge y_1,\cdots,x_{n-1}\wedge y_{n-1},(y_n,u_n))\Big),\\
~ &&\hat g\Big((x_1,u_1)\wedge(y_1,v_1),\cdots,(x_n,u_n)\wedge(y_n,v_n),(z,w)\Big)\\
~&=&\Big(0,g(x_1\wedge y_1,\cdots,x_n\wedge y_n,w)-g(x_1\wedge y_1,\cdots,z\wedge y_n,u_n)+g(x_1\wedge y_1,\cdots,z\wedge x_n,v_n)\Big).\\
\end{eqnarray*}
\item[(c)] For a pair of given maps
 $$(f,g)\in \Hom(\underbrace{\wedge^2\g\otimes\cdots\otimes (\g\wedge V)\otimes\cdots\otimes\wedge^2\g}_n,V)\times\Hom(\underbrace{\wedge^2\g\otimes\cdots\otimes(\g\wedge V)\otimes\cdots \wedge^2\g}_n\otimes \g,V),$$
we define a pair of linear maps $(\hat f,\hat g)\in C_{\rm LieY}^{n+1}(\g\oplus V,\g\oplus V)$ to be
\begin{eqnarray*}
~ &&\hat f\Big((x_1,u_1)\wedge(y_1,v_1),\cdots,(x_n,u_n)\wedge(y_n,v_n)\Big)\\
&=&\Big(0,f(x_1\wedge y_1,\cdots,(x_i,v_i)\cdots,(x_n,u_n)\wedge(y_n,v_n))-f(x_1\wedge y_1,\cdots,(y_i,u_i),\cdots,(x_n,u_n)\wedge(y_n,v_n))\Big),\\
~ &&\hat g\Big((x_1,u_1)\wedge(y_1,v_1),\cdots,(x_n,u_n)\wedge(y_n,v_n),(z,w)\Big)\\
~&=&\Big(0,g(x_1\wedge y_1,\cdots, (x_i,v_i),\cdots,x_n\wedge y_n,z)-g(x_1\wedge y_1,\cdots, (y_i,u_i),\cdots,x_n\wedge y_n,z)\Big).
\end{eqnarray*}
\end{itemize}
Moreover, for a given linear map $f=(f_1,f_2)\in\huaC^1(\g,\rho,\mu)=\Hom(\g,\g)\times\Hom(V,V)$, we define a linear map $\hat f\in C_{\rm LieY}^1(\g\oplus V,\g\oplus V)$ to be
\begin{eqnarray*}
\hat f(x,u)=(f_1(x),f_2(u)),\quad \forall (x,u)\in \g\oplus V.
\end{eqnarray*}

A pair of linear maps $(\hat f,\hat g)$ (resp. $\hat f$) defined above is called the {\bf lift} of $(f,g)$ (resp. $f$). It is obvious that $\huaC^n(\g,\rho,\mu)$ is a subspace of $C^n_{\rm LieY}(\g\oplus V,\g\oplus V)$ via the lift map.

Let $(\g,\rho,\mu)$ be a \LYP pair. For $(f,g)=\Big((f_1,g_1),(f_2,g_2),(f_3,g_3)\Big)\in \huaC^{n+1}(\g,\rho,\mu)~(n\geqslant 1)$, write the coboundary map
$$\Delta:\huaC^{n+1}(\g,\rho,\mu)\to \huaC^{n+2}(\g,\rho,\mu)$$
as
$$\Delta(f,g)=\Big(\big(\Delta(f,g)\big)_1,\big(\Delta(f,g)\big)_2,\big(\Delta(f,g)\big)_3\Big),$$
where
\begin{itemize}
\item $\Big(\Delta(f,g)\Big)_1=\delta_{\mathsf{LieY}}(f_1,g_1)$ is the coboundary map given in \eqref{cohomo1} and \eqref{cohomo2} with coefficients in the adjoint representation.
\item $\Big(\Delta(f,g)\Big)_2$ is defined to be
\begin{eqnarray*}
~ &&\Big(\Delta_{\rm I}(f,g)\Big)_2(\frkX_1,\cdots,\frkX_n,(x,v))\\
~ &=&(-1)^{n}\Big(\rho(x)g_2(\frkX_1,\cdots,\frkX_n,v)+\rho(g_1(\frkX_1,\cdots,\frkX_n,x))v-g_2(\frkX_1,\cdots,\frkX_n,\rho(x)v)\Big)\\
~ &&+\sum_{k=1}^n(-1)^{k+1}D(x_k,y_k)f_2(\frkX_1,\cdots,\hat\frkX_k,\cdots,\frkX_n,(x,v))\\
~ &&+\sum_{1\leqslant k<l\leqslant n}(-1)^kf_2(\frkX_1,\cdots,\hat\frkX_k,\cdots,\frkX_k\circ \frkX_l,\cdots,\frkX_n,(x,v))\\
~ &&+\sum_{1\leqslant k<n+1}(-1)^kf_2(\frkX_1,\cdots,\hat\frkX_k,\cdots,\frkX_n,(\Courant{x_k,y_k,x},v)+(x,D(x_k,y_k)v)),\\
~ &&\Big(\Delta_{\rm II}(f,g)\Big)_2(\frkX_1,\cdots,\frkX_{n+1},v)\\
~ &=&(-1)^n\Big(D(g_1(\frkX_1,\cdots,\frkX_n,x_{n+1}),y_{n+1})v-D(g_1(\frkX_1,\cdots,\frkX_n,y_{n+1}),x_{n+1})v\Big)\\
~ &&+\sum_{k=1}^{n+1}(-1)^{k+1}D(x_k,y_k)g_2(\frkX_1,\cdots,\hat\frkX_k,\cdots,\frkX_{n+1},v)\\
~ &&+\sum_{1\leqslant k<l\leqslant n+1}(-1)^kg_2(\frkX_1,\cdots,\hat\frkX_k,\cdots,\frkX_k\circ \frkX_l,\cdots,\frkX_{n+1},v)\\
~ &&+\sum_{k=1}^{n+1}(-1)^kg_2(\frkX_1,\cdots,\hat\frkX_k,\cdots,\frkX_{n+1},D(x_k,y_k)v).
\end{eqnarray*}
\item $\Big(\Delta(f,g)\Big)_3$ is defined to be
\begin{eqnarray*}
~ &&\Big(\Delta_{\rm I}(f,g)\Big)_3(\frkX_1,\cdots,(x_i,v),\cdots,\frkX_{n+1})\\
~ &=&(-1)^{n}\Big(\rho(x_{n+1})g_3(\frkX_1,\cdots,(x_i,v),\cdots,\frkX_n,y_{n+1})-\rho(y_{n+1})g_3(\frkX_1,\cdots,(x_i,v),\cdots,\frkX_n,x_{n+1})\\
~ &&-g_3(\frkX_1,\cdots,(x_i,v),\cdots,[x_{n+1},y_{n+1}]\Big)\\
~ &&+\sum_{k=1,\atop k\neq i}^n(-1)^{k+1}D(x_k,y_k)f_3(\frkX_1,\cdots,\hat\frkX_k,\cdots,(x_i,v),\cdots,\frkX_{n+1})\\
~ &&+\sum_{k<l\leqslant n+1,\atop k,l\neq i}(-1)^kf_3(\frkX_1,\cdots,(x_i,v),\cdots,\hat\frkX_k,\cdots,\frkX_k\circ \frkX_l,\cdots,\frkX_{n+1})\\
~ &&+(-1)^i\mu(x_i,f_1(\frkX_1,\cdots,\hat{(x_i,v)},\cdots,\frkX_{n+1}))v\\
~ &&+\sum_{k> i}(-1)^if_3(\frkX_1,\cdots,\hat{(x_i,v)},\cdots,(x_k,\mu(x_i,y_k)v)-(y_k,\mu(x_i,x_k)v),\cdots,\frkX_{n+1})\\
~ &&+\sum_{k<i}(-1)^kf_3(\frkX_1,\cdots,\hat\frkX_k,\cdots,\frkX_{i-1},(\Courant{x_k,y_k,x_i},v)+(x_i,D(x_k,y_k)v),\frkX_{i+1},\cdots,\frkX_{n+1}),\\
~ &&\Big(\Delta_{\rm II}(f,g)\Big)_3(\frkX_1,\cdots,(x_i,v),\cdots,\frkX_{n+1},x)\\
~ &=&(-1)^{n}\Big(\mu(y_{n+1},x)g_3(\frkX_1,\cdots,(x_i,v),\cdots,\frkX_{n+1},x_{n+1})
-\mu(x_{n+1},x)g_3(\frkX_1,\cdots,(x_i,v),\cdots,\frkX_{n+1},y_{n+1})\Big)\\
~ &&+\sum_{k=1,\atop k\neq i}^{n+1}(-1)^{k+1}D(x_k,y_k)g_3(\frkX_1,\cdots,(x_i,v),\cdots,\hat\frkX_k,\cdots,\frkX_{n+1},x)\\
~ &&+\sum_{k<l\leqslant n+1,\atop k,l\neq i}(-1)^kg_3(\frkX_1,\cdots,(x_i,v),\cdots,\hat\frkX_k,\cdots,\frkX_k\circ \frkX_l,\cdots,\frkX_{n+1},x)\\
~ &&+\sum_{k=1}^{n+1}g_3(\frkX_1,\cdots,(x_i,v),\cdots,\hat\frkX_k,\cdots,\frkX_{n+1},\Courant{x_k,y_k,x})\\
~ &&+(-1)^i\Big(\mu(x_i,g_1(\frkX_1,\cdots,\hat{(x_i,v)},\cdots,\frkX_{n+1},x))v-g_2(\frkX_1,\cdots,\hat{(x_i,v)},\cdots,\frkX_{n+1},\mu(x_i,x)v)\\
~ &&\quad+g_3(\frkX_1,\cdots,\hat{(x_i,v)},\cdots,(y_l,\mu(x_i,x_l)v)-(x_l,\mu(x_i,y_l)v),\cdots,\frkX_{n+1},x)\Big).
\end{eqnarray*}
\end{itemize}

For a pair of given maps $f=\Big(f_1,f_2\Big)\in \huaC^1(\g,\rho,\mu)=\Hom(\g,\g)\times\Hom(V,V)$, the coboundary map
$$\Delta:\huaC^1(\g,\rho,\mu)\to \huaC^2(\g,\rho,\mu)$$
is written as $\Big(\Delta(f)_1,\Delta(f)_2,\Delta(f)_3\Big)$, where
\begin{itemize}
\item $\Big(\Delta (f)\Big)_1=\delta_{\mathsf{LieY}} f_1$ is the coboundary map given in \eqref{1cochain} and \eqref{2cochain} with coefficients in the adjoint representation.
\item $\Big(\Delta (f)\Big)_2$ is given as follows
\begin{eqnarray*}
\Big(\Delta_{\rm I} (f)\Big)_2(x,v)&=&\rho(x)f_2(v)+\rho(f_1(x))v-f_2(\rho(x)v),\\
\Big(\Delta_{\rm II} (f)\Big)_2(x,y,v)&=&D(x,y)f_2(v)+D(f_1(x),y)v+D(x,f_1(y))v-f_2(D(x,y)v).
\end{eqnarray*}
\item $\Big(\Delta (f)\Big)_3$ is given as follows
\begin{eqnarray*}
\Big(\Delta_{\rm I} (f)\Big)_3(x,v)&=&\rho(x)f_2(v)+\rho(f_1(x))v-f_2(\rho(x)v),\\
\Big(\Delta_{\rm II} (f)\Big)_3(v,x,y)&=&\mu(x,y)f_2(v)+\mu(f_1(x),y)v+\mu(x,f_1(y))v-f_2(\mu(x,y)v).
\end{eqnarray*}
\end{itemize}

\begin{thm}
With the above notations, we have $\Delta\circ \Delta=0$. Thus we obtain a well-defined complex $(\huaC^\bullet(\g,\rho,\mu),\Delta)$.
\end{thm}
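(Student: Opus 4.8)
The plan is to deduce $\Delta\circ\Delta=0$ from the already established identity $\delta_{\mathsf{LieY}}\circ\delta_{\mathsf{LieY}}=0$ for the Yamaguti coboundary operator of the semidirect product Lie-Yamaguti algebra $\g\ltimes V$ with coefficients in its adjoint representation. Concretely, I would show that the lift is a morphism of cochain complexes, that is,
\[
\widehat{\Delta(f,g)}=\delta_{\mathsf{LieY}}(\hat f,\hat g)
\]
for every $(f,g)\in\huaC^{\bullet}(\g,\rho,\mu)$, together with the degree-one version $\widehat{\Delta(f)}=\delta_{\mathsf{LieY}}(\hat f)$. Granting this identity, the injectivity of the lift (already recorded, since $\huaC^\bullet(\g,\rho,\mu)$ embeds into $C^\bullet_{\rm LieY}(\g\oplus V,\g\oplus V)$) gives $\widehat{\Delta(\Delta(f,g))}=\delta_{\mathsf{LieY}}\big(\delta_{\mathsf{LieY}}(\hat f,\hat g)\big)=0$, whence $\Delta(\Delta(f,g))=0$ and the complex is well defined.

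To see that $\delta_{\mathsf{LieY}}$ genuinely restricts to the subspace $\huaC^\bullet(\g,\rho,\mu)$, I would first record the grading underlying the construction. Assign weight $0$ to $\g$ and weight $1$ to $V$; then the explicit formulas for $[\cdot,\cdot]_{\ltimes}$ and $\Courant{\cdot,\cdot,\cdot}_{\ltimes}$ exhibit $\g\ltimes V$ as a weight-graded Lie-Yamaguti algebra, since $[V,V]_{\ltimes}=0$ and each $V$-valued summand of the two brackets is linear in exactly one $V$-entry. Consequently the adjoint cochain complex $C^\bullet_{\rm LieY}(\g\oplus V,\g\oplus V)$ inherits a weight grading for which $\delta_{\mathsf{LieY}}$ is weight-preserving. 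A cochain of weight $0$ either reads no $V$-argument and is $\g$-valued, or reads exactly one $V$-argument and is $V$-valued, while higher $V$-degrees are forced to vanish because $\g\oplus V$ is concentrated in weights $0$ and $1$. Tracking the possible positions of the single $V$-entry, one checks that the weight-$0$ part of $C^{\bullet}_{\rm LieY}(\g\oplus V,\g\oplus V)$ is precisely the image of the lift, namely the three families (a), (b), (c). Hence $\huaC^\bullet(\g,\rho,\mu)$ is exactly the weight-$0$ subcomplex, and $\delta_{\mathsf{LieY}}$ preserves it automatically.

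The substantive step is the identity $\widehat{\Delta(f,g)}=\delta_{\mathsf{LieY}}(\hat f,\hat g)$ itself, which I would verify by expanding the right-hand side through the defining formulas \eqref{cohomo1}--\eqref{cohomo2} (with $\rho=\ad_{\ltimes}$, $\mu=\frkR_{\ltimes}$ and $D=\frkL_{\ltimes}$ the adjoint representation of $\g\ltimes V$), substituting the semidirect operations, and projecting the output onto $\g$ and onto $V$. The $\g$-projection only sees the $\g$-components of the inputs and reproduces $\delta_{\mathsf{LieY}}$ for the adjoint representation of $\g$, giving $\big(\Delta(f,g)\big)_1$. The $V$-projection is linear in the unique $V$-entry: collecting the contributions in which that entry sits in the last, distinguished slot yields $\big(\Delta(f,g)\big)_2$, while those in which it sits in one of the remaining wedge slots yield $\big(\Delta(f,g)\big)_3$. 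Here one uses $\ad_{\ltimes}\big((x,u)\big)(y,v)=[x,y]+\rho(x)v-\rho(y)u$, the analogous expansions of $\frkR_{\ltimes}$ and $\frkL_{\ltimes}=D_{\ltimes}$, and the decomposition of $\frkX_k\circ\frkX_l$ in $\g\oplus V$ into its $\g$- and $V$-parts; terms such as $D\big(g_1(\cdots),y_{n+1}\big)v$ arise exactly from the $V$-valued parts of $\frkR_{\ltimes}$ and $\frkL_{\ltimes}$ when one argument is the $\g$-valued output of $g_1$ and another carries the $V$-entry.

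I expect the main obstacle to be this last term-by-term matching: it is a lengthy but mechanical bookkeeping, and the delicate points are (i) keeping careful track of the signs and of the antisymmetrization implicit in each wedge slot $(x_i,u_i)\wedge(y_i,v_i)$, and (ii) confirming that no term of $V$-degree $\geqslant 2$ survives, so that $\delta_{\mathsf{LieY}}(\hat f,\hat g)$ indeed lands in $\huaC^\bullet(\g,\rho,\mu)$ rather than in a larger space. Both points are guaranteed in advance by the weight argument above, which is why I would present the grading observation first: it reduces the theorem to the bare chain-map identity together with the cited fact $\delta_{\mathsf{LieY}}\circ\delta_{\mathsf{LieY}}=0$, with the degree-one case handled identically using \eqref{1cochain}--\eqref{2cochain}.
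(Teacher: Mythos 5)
Your proposal is correct and follows essentially the same route as the paper: both realize $(\huaC^\bullet(\g,\rho,\mu),\Delta)$ as a subcomplex of the Yamaguti complex $(C^\bullet_{\rm LieY}(\g\oplus V,\g\oplus V),\delta)$ of the semidirect product $\g\ltimes V$ with adjoint coefficients, and deduce $\Delta\circ\Delta=0$ from $\delta\circ\delta=0$. The only difference is one of detail: the paper asserts the subcomplex property as ``easy to see,'' while your weight-grading observation (weight $0$ on $\g$, weight $1$ on $V$) and the explicit chain-map identity $\widehat{\Delta(f,g)}=\delta_{\mathsf{LieY}}(\hat f,\hat g)$ supply the justification the paper omits.
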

\noindent{\emph{Sketch of Proof.}
 Consider the semidirect product \LYA ~$(\g\ltimes V,\br__{\ltimes},\ltp_{\ltimes})$, and take its corresponding cohomology $(C^\bullet_{\mathsf{LieY}}(\g\oplus V,\g\oplus V),\delta)$ with coefficients in the adjoint representation. It is easy to see that $(\huaC^\bullet(\g,\rho,\mu),\Delta)$ is a subcomplex of $(C^\bullet_{\mathsf{LieY}}(\g\oplus V,\g\oplus V),\delta)$. Thus for any $(f,g)\in\huaC^n(\g,\rho,\mu)$, we have that $(\Delta\circ \Delta)(f,g)_1=(\Delta\circ \Delta)(f,g)_2=(\Delta\circ \Delta)(f,g)_3=0$. The conclusion thus follows.}
\qed

\begin{defi}
Let $(\g,\rho,\mu)$ be a \LYP pair. We call the cochain complex $(\huaC^\bullet(\g,\rho,\mu),\Delta)$ the {\bf cohomology of the \LYP pair $(\g,\rho,\mu)$}. The corresponding $n$-th cohomology group is denoted by
$$\huaH^n(\g,\rho,\mu)=\huaZ^n(\g,\rho,\mu)/\huaB^n(\g,\rho,\mu),$$
where $\huaZ^n(\g,\rho,\mu)$ and $\huaB^n(\g,\rho,\mu)$ are sets of $n$-cocycles and $n$-coboundaries respectively.
\end{defi}

\subsection{Linear deformations of \LYP pairs}
In this subsection, we characterize linear deformations of \LYP pairs by the cohomology theory established in the previous subsection. The following definition is standard.

\begin{defi}\label{defi:homo}
Let $\Big((\g,\br_{_\g},\ltp_\g),(V;\rho_V,\mu_V)\Big)$ and $\Big((\h,\br_{_\h},\ltp_\h),(W;\rho_W,\mu_W)\Big)$ be two \LYP pairs.
A {\bf homomorphism} from $(\g,\rho_V,\mu_V)$ to $(\h,\rho_W,\mu_W)$ is a pair $(\phi,\varphi)$, where $\phi:\g \to \h$ is a Lie-Yamaguti algebra homomorphism and $\varphi:V\to W$ is a homomorphism of representations from $(V;\rho,\mu)$ to $(W;\varrho,\varpi)$, i.e.,
\begin{eqnarray}
\label{homo1}\varphi\Big(\rho_V(x)v\Big)&=&\rho_W\Big(\phi(x)\Big)\big(\varphi(v)\big),\\
\label{homo2}\varphi\Big(\mu_V(x,y)v\Big)&=&\mu_W\Big(\phi(x),\phi(y)\Big)\big(\varphi(v)\big), \quad \forall x,y\in \g,~v\in V.
\end{eqnarray}
\end{defi}
By Eqs. \eqref{rep}, \eqref{homo1}, and \eqref{homo2}, we deduce that
\begin{eqnarray}
\label{homo3}\varphi\Big(D_V(x,y)v\Big)&=&D_W\Big(\phi(x),\phi(y)\Big)\big(\varphi(v)\big),\quad \forall x,y\in \g,~v\in V,
\end{eqnarray}
where $D_V=D_{\rho_{V},\mu_V}$ and $D_W=D_{\rho_W,\mu_W}$.

Let $\Big((\g,[\cdot,\cdot],\Courant{\cdot,\cdot,\cdot}),(V;\rho,\mu)\Big)$ be a \LYP pair. Let $\phi:\wedge^2\g\to \g,~\varphi_i:\wedge^2\g\otimes\g \to \g$ and $\varrho:\g\to \gl(V),~\varpi_i:\otimes^2\g \to \gl(V)~(i=1,2)$ be linear maps. Consider a $t$-parameterized family of bracket operations and linear maps:
\begin{eqnarray*}
[x,y]_t&=&[x,y]+t\phi(x,y),\\
~\Courant{x,y,z}_t&=&\Courant{x,y,z}+t\varphi_1(x,y,z)+t^2\varphi_2(x,y,z),\\
\rho_t(x)&=&\rho(x)+t\varrho(x),\\
\mu_t(x,y)&=&\mu(x,y)+t\varpi_1(x,y)+t^2\varpi_2(x,y)
\end{eqnarray*}
By \eqref{rep}, we have
$$D_t:= D_{\rho_t,\mu_t}=D+t\huaD_1+t^2\huaD_2$$
where
\begin{eqnarray*}
\huaD_1(x,y)&=&\varpi_1(y,x)-\varpi_1(x,y)+[\rho(x),\varrho(y)]+[\varrho(x),\rho(y)]-\varrho([x,y]),\\
~\huaD_2(x,y)&=&\varpi_2(y,x)-\varpi_2(x,y)+[\varrho(x),\varrho(y)].
\end{eqnarray*}
It is easy to see that both $\huaD_1$ and $\huaD_2$ are skew-symmetric, and so is $D_t$.

\begin{defi}
Let $\Big((\g,\br,,\ltp),(V;\rho,\mu)\Big)$ be a \LYP pair. With the above notations, if $\Big((\g,[\cdot,\cdot]_t,\Courant{\cdot,\cdot,\cdot}_t),(V,\rho_t,\mu_t)\Big)$ are a family of \LYP pairs for all $t$, we say that $(\phi,\varphi_i;\varrho,\varpi_i)~(i=1,2)$ generates a {\bf linear deformation} of the \LYP pair $\Big((\g,[\cdot,\cdot],\Courant{\cdot,\cdot,\cdot}),(V;\rho,\mu)\Big)$.
\end{defi}

\begin{pro}\label{fund:pro}
Let $\Big((\g,[\cdot,\cdot],\Courant{\cdot,\cdot,\cdot}),(V;\rho,\mu)\Big)$ be a \LYP pair. If a collection of linear maps $(\phi,\varphi_i;\varrho,\varpi_i)~(i=1,2)$ generates a linear deformation of the \LYP pair $(\g,\rho,\mu)$, then we have
\begin{itemize}
\item [\rm (i)] $\Big((\phi,\varphi_1),(\varrho,\huaD_1),(0,0)\Big)\in \huaC^2(\g,\rho,\mu)$ is a $2$-cocycle of $\Delta$, i.e., $$\Delta\Big((\phi,\varphi_1),(\varrho,\huaD_1),(0,0)\Big)=0;$$
\item [\rm (ii)] $\Big((\g,\phi,\varphi_2),(V;\varrho,\varpi_2)\Big)$ is a \LYP pair.
\emptycomment{
\item [\rm (iii)] the following equations are satisfied:
{\footnotesize
\begin{eqnarray}
~&&[\phi(x,y),z]+\phi([x,y],z)+\varphi_1(x,y,z)+c.p.=0,\\
~ &&\Courant{\phi(x,y),z,w}+\varphi_1([x,y],z,w)+c.p.(x,y,z)=0,\\
~ &&\varphi_1(\phi(x,y),z,w)+\varphi_2([x,y],z,w)+c.p.(x,y,z)=0,\\
~ &&\varphi_1(x,y,\phi(z,w))+\varphi_2(x,y,[z,w])=[\varphi_2(x,y,z),w]+\phi(\varphi_1(x,y,z),w)+[z,\varphi_2(x,y,w)]+\phi(z,\varphi_1(x,y,w)),\\
~ &&\Courant{x,y,\varphi_2(z,w,t)}+\varphi_1(x,y,\varphi_1(z,w,t))+\varphi_2(x,y,\Courant{z,w,t})\\
~ &&\nonumber~~=\Courant{\varphi_2(x,y,z),w,t}+\varphi_1(\varphi_1(x,y,z),w,t)+\varphi_2(\Courant{x,y,z},w,t)\\
~ &&\nonumber~~+\Courant{z,\varphi_2(x,y,w),t}+\varphi_1(z,\varphi_1(x,y,w),t)+\varphi_2(z,\Courant{x,y,w},t)\\
~ &&\nonumber~~+\Courant{z,w,\varphi_2(x,y,t)}+\varphi_1(z,w,\varphi_1(x,y,t))+\varphi_2(z,w,\Courant{x,y,t}),\\
~ &&\varphi_1(x,y,\varphi_2(z,w,t))+\varphi_2(x,y,\varphi_1(z,w,t))=\varphi_1(\varphi_2(x,y,z),w,t)+\varphi_2(\varphi_1(x,y,z),w,t)\\
~ &&\nonumber~~+\varphi_1(z,\varphi_2(x,y,w),t)+\varphi_2(z,\varphi_1(x,y,w),t)+\varphi_1(z,w,\varphi_2(x,y,t))+\varphi_2(z,w,\varphi_1(x,y,t)),\\
~ &&\mu(\phi(x,y),z)+\varpi_1([x,y],z)-\mu(x,z)\varrho(y)-\varpi_1(x,z)\rho(y)+\mu(y,z)\varrho(x)+\varpi_1(y,z)\rho(x)=0,\\
~ &&\varpi_1(\phi(x,y),z)+\varpi_2([x,y],z)-\varpi_1(x,z)\varrho(y)-\varpi_2(x,z)\rho(y)+\varpi_1(y,z)\varrho(x)+\varpi_2(y,z)\rho(x)=0,\\
~ &&\mu(x,\phi(y,z))+\varpi_1(x,[y,z])-\rho(y)\varpi_1(x,z)-\varrho(y)\mu(x,z)+\rho(z)\varpi_1(x,y)+\varrho(z)\mu(x,y)=0,\\
~ &&\varpi_1(x,\phi(y,z))+\varpi_2(x,[y,z])-\rho(y)\varpi_2(x,z)-\varrho(y)\varpi_1(x,z)+\rho(z)\varpi_2(x,y)+\varrho(z)\varpi_1(x,y)=0,\\
~ &&\rho(\varphi_2(x,y,z))+\varrho(\varphi_1(x,y,z))=[\huaD_1(x,y),\varrho(z)]+[\huaD_2(x,y),\rho(z)],\\
~ &&\mu(x,\varphi_1(y,z,w))+\varpi_1(x,\Courant{y,z,w})=D(y,z)\varpi_1(x,w)+\huaD_1(y,z)\mu(x,w)\\
~ \nonumber&&~~+\mu(z,w)\varpi_1(x,y)+\varpi_1(z,w)\mu(x,y)-\mu(y,w)\varpi_1(x,z)-\varpi_1(y,w)\mu(x,z),\\
~ &&\mu(x,\varphi_2(y,z,w))+\varpi_1(x,\varphi_1(y,z,w))+\varpi_2(x,\Courant{y,z,w})=D(y,z)\varpi_2(x,w)\\
~ &&\nonumber~~+\huaD_1(y,z)\varpi_1(x,w)+\huaD_2(y,z)\mu(x,w)+\mu(z,w)\varpi_2(x,y)+\varpi_1(z,w)\varpi_1(x,y)\\
~ &&\nonumber~~+\varpi_2(z,w)\mu(x,y)-\mu(y,w)\varpi_2(x,z)-\varpi_1(y,w)\varpi_1(x,z)-\varpi_2(y,w)\mu(x,z),\\
~ &&\varpi_1(x,\varphi_2(y,z,w))+\varpi_2(x,\varphi_1(y,z,w))=\huaD_1(y,z)\varpi_2(x,w)+\huaD_2(y,z)\varpi_1(x,w)\\
~ \nonumber&&~~+\varpi_1(z,w)\varpi_2(x,y)+\varpi_2(z,w)\varpi_1(x,y)-\varpi_1(y,w)\varpi_2(x,z)-\varpi_2(y,w)\varpi_1(x,z),\\
~ &&\mu(\varphi_1(x,y,z),w)+\varpi_1(\Courant{x,y,z},w)+\mu(z,\varphi_1(x,y,w))+\varpi_1(z,\Courant{x,y,w})\\
~ \nonumber&&~~=[D(x,y),\varpi_1(z,w)]+[\huaD_1(x,y),\mu(z,w)],\\
~ &&\mu(\varphi_2(x,y,z),w)+\mu(z,\varphi_2(x,y,w))+\varpi_1(\varphi_1(x,y,z),w)+\varpi_1(z,\varphi_1(x,y,w))\\
~ \nonumber&&~~+\varpi_2(\Courant{x,y,z},w))+\varpi_2(z,\Courant{x,y,w})=[D(x,y),\varpi_2(z,w)]+[\huaD_1(x,y),\varpi_1(z,w)]+[\huaD_2(x,y),\mu(z,w)],\\
~ &&\varpi_1(\varphi_2(x,y,z),w)+\varpi_2(\varpi_1(x,y,z),w)+\varpi_1(z,\varphi_2(x,y,w))+\varpi_2(z,\varpi_1(x,y,w))\\
~ \nonumber&&~~=[\huaD_1(x,y),\varpi_2(z,w)]+[\huaD_2(x,y),\varpi_1(z,w)].
\end{eqnarray}}}
\end{itemize}
\end{pro}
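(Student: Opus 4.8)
The plan is to exploit the defining property of the deformation directly. By hypothesis, for every $t\in\mathbb{K}$ the quadruple $([\cdot,\cdot]_t,\Courant{\cdot,\cdot,\cdot}_t,\rho_t,\mu_t)$ satisfies all four axioms of Definition~\ref{LY} together with the five representation axioms of Definition~\ref{defi:representation}. After substituting the polynomial-in-$t$ expressions for the four operations, each axiom becomes an identity between polynomials in $t$ whose coefficients are multilinear maps valued in $\g$ or in $\gl(V)$. Since $\mathbb{K}$ has characteristic $0$ and the identities hold for all $t$, each coefficient of $t^k$ must vanish separately. The coefficients of $t^0$ merely recover the axioms of the undeformed pair $(\g,\rho,\mu)$; the whole content of the proposition is carried by the remaining coefficients, and the two items amount to reading off, respectively, the coefficient of $t^1$ and the coefficient of the top power of $t$.

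For part (i) I would pass to the semidirect product. By the proposition of \cite{Zhang1}, $(V;\rho_t,\mu_t)$ is a representation of $(\g,[\cdot,\cdot]_t,\Courant{\cdot,\cdot,\cdot}_t)$ for all $t$ if and only if the induced operations $([\cdot,\cdot]_{\ltimes,t},\Courant{\cdot,\cdot,\cdot}_{\ltimes,t})$ on $\g\oplus V$ form a \LYA\ for all $t$. A short computation shows this family is polynomial in $t$, with $t^0$-part the semidirect product $\g\ltimes V$ and first-order part equal to the lift of the $2$-cochain displayed in (i) (here one uses that the derived operator of $(\rho_t,\mu_t)$ expands as $D+t\huaD_1+t^2\huaD_2$). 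The first-order term of any polynomial family of \LYA\ structures with fixed zeroth part is a $2$-cocycle in the Yamaguti complex of the zeroth-part algebra with adjoint coefficients, a standard fact from the deformation theory of \LYA s (\cite{ShengZhao}). Since $\huaC^\bullet(\g,\rho,\mu)$ sits inside $C^\bullet_{\mathsf{LieY}}(\g\oplus V,\g\oplus V)$ as a subcomplex via the lift, this cocycle condition descends to $\Delta\big((\phi,\varphi_1),(\varrho,\huaD_1),(0,0)\big)=0$, which is (i). Alternatively, and more explicitly, one matches the $t^1$-coefficients of the deformed \LYP axioms term by term against the three components $\big(\Delta(\cdot)\big)_1,\big(\Delta(\cdot)\big)_2,\big(\Delta(\cdot)\big)_3$ written out before the statement.

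For part (ii) I would read off the coefficient of the top power of $t$ in each deformed axiom. Because $[\cdot,\cdot]_t$ has $t$-degree $1$ while $\Courant{\cdot,\cdot,\cdot}_t$ has $t$-degree $2$, the leading coefficient of every axiom involves only the top data $\phi$ and $\varphi_2$: the first \LYA\ axiom has $t$-degree $2$ and its $t^2$-coefficient is $\phi(\phi(x,y),z)+\varphi_2(x,y,z)+\text{(cyclic)}$, i.e.\ the first axiom for $(\g,\phi,\varphi_2)$; the second and third axioms have $t$-degree $3$ and the fundamental identity $t$-degree $4$, and in each case the leading coefficient is exactly the corresponding axiom for $(\g,\phi,\varphi_2)$. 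Performing the same extraction on the five representation axioms produces precisely the representation axioms for $(V;\varrho,\varpi_2)$, the key point being that the derived operator of $(\varrho,\varpi_2)$ taken relative to the bracket $\phi$ coincides with $\huaD_2$. Hence $\big((\g,\phi,\varphi_2),(V;\varrho,\varpi_2)\big)$ is a \LYP pair.

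The genuinely laborious step is the explicit coefficient matching for part (i): one must expand all nine deformed identities and check that the $t^1$-coefficients reassemble exactly into the three displayed components of $\Delta$. The one real subtlety — and the place where a careless computation goes wrong — is the bookkeeping of the derived operator $D$: throughout one must use $D_t=D_{\rho_t,\mu_t}$ computed with the deformed bracket $[\cdot,\cdot]_t$, and it is only with this convention that the first-order part reproduces $\huaD_1$ in the second slot and the top-order part of the representation axioms reproduces $\varpi_2$ together with the correct $\huaD_2$ in part (ii). Once this $D$-bookkeeping is fixed, both items follow by collecting coefficients.
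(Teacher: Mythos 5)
The paper offers no proof to compare against: its ``proof'' of this proposition is the single sentence ``The computation is straightforward, so we omit the details.'' Your overall strategy is certainly the intended computation --- substitute the polynomial families into the four \LYA~axioms and the five representation axioms, use characteristic $0$ to force each $t$-coefficient to vanish, read (i) from the $t^1$-coefficients and (ii) from the top coefficients --- and your degree bookkeeping for (ii) (degrees $2,3,3,4$ for the algebra axioms, $3,3,3,4,4$ for the representation axioms) is correct.

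However, two of your specific claims are false, and one of them breaks your preferred route to (i). First, the semidirect-product shortcut: the first-order term of the deformed family $(\g\oplus V,[\cdot,\cdot]_{\ltimes,t},\Courant{\cdot,\cdot,\cdot}_{\ltimes,t})$ is \emph{not} the lift of $\big((\phi,\varphi_1),(\varrho,\huaD_1),(0,0)\big)$. By the type-(b) lift formula with $n=1$, the lift of $(\varrho,\huaD_1)$ has ternary $V$-component $\huaD_1(x,y)w-\huaD_1(z,y)u+\huaD_1(z,x)v$, whereas the $t^1$-term of $\Courant{x+u,y+v,z+w}_{\ltimes,t}$ has $V$-component $\huaD_1(x,y)w+\varpi_1(y,z)u-\varpi_1(x,z)v$; these are different cochains in general (already when $\varpi_1=0$ and $\varrho\neq 0$), and the vanishing third slot $(0,0)$ contributes nothing that could absorb the difference. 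So ``standard fact plus subcomplex descent'' proves that a \emph{different} element of $\huaC^2(\g,\rho,\mu)$ is a cocycle, namely the lift whose third component is the nonzero map $(x,v,z)\mapsto -\varpi_1(x,z)v-\huaD_1(z,x)v$; it does not yield the statement (i). Consequently your ``alternative'' term-by-term matching against the displayed formulas for $\Delta$ is not optional --- it is the only actual proof. Second, your $D$-bookkeeping is resolved backwards. Computing $D_t=D_{\rho_t,\mu_t}$ with the deformed bracket gives
\[
D_t=D+t\big(\huaD_1-\rho\circ\phi\big)+t^2\big(\huaD_2-\varrho\circ\phi\big),
\]
where $(\rho\circ\phi)(x,y):=\rho(\phi(x,y))$, because $-\rho_t([x,y]_t)$ contributes $-t\rho(\phi(x,y))-t^2\varrho(\phi(x,y))$; it is the expansion with the \emph{undeformed} bracket that returns the paper's $\huaD_1,\huaD_2$ on the nose (the paper's own display $D_t=D+t\huaD_1+t^2\huaD_2$ silently makes this substitution). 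For the same reason, the derived operator of $(\varrho,\varpi_2)$ relative to the bracket $\phi$ is $\huaD_2-\varrho\circ\phi$, not $\huaD_2$; your argument for (ii) in fact goes through precisely \emph{because} the correctly computed $t^2$-part of $D_t$ equals that derived operator, but your stated ``key point'' asserts the opposite of what the computation gives. Until the convention is fixed and the $t^1$-matching for (i) is actually carried out with the correct first-order part of $D_t$, the proposal remains a plan rather than a proof.
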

\begin{proof}
The computation is straightforward, so we omit the details.
\end{proof}
\emptycomment{
\begin{rmk}
Note that if $\Delta\Big((\phi,\varphi_1),(\varrho,\huaD_1),(0,0)\Big)=0$, i.e., $\Big((\phi,\varphi_1),(\varrho,\huaD_1),(0,0)\Big)\in \huaZ^2(\g,\rho,\mu)$, then we have
\begin{eqnarray*}
\Big(\Delta_{\rm I}(\varrho,\huaD_1)\Big)_2(x,y,z,v)=0\quad {\rm and} \quad\Big(\Delta_{\rm II}(\varrho,\huaD_1)\Big)_2(x,y,z,v)=0.
\end{eqnarray*}
It is easy to see that $\Big(\Delta_{\rm I}(\varrho,\huaD_1)\Big)_2(x,y,z,v)=0$ is equivalent to $$\rho(\varphi_1(x,y,z))+\varrho(\Courant{x,y,z})=[D(x,y),\varrho(z)]+[\huaD_1(x,y),\rho(z)],$$
while $\Big(\Delta_{\rm II}(\varrho,\huaD_1)\Big)_2(x,y,z,v)=0$ is equivalent to
{\footnotesize
$$D(\varphi_1(x,y,z),w)+\huaD_1(\Courant{x,y,z},w)+D(z,\varphi_1(x,y,w))+\huaD_1(z,\Courant{x,y,w})=[D(x,y),\huaD_1(z,w)]+[\huaD_1(x,y),D(z,w)],$$
}
which is exactly the coefficients of $t$ in $D_t(\Courant{x,y,z},w)+D_t(z,\Courant{x,y,w})=[D_t(x,y),D_t(z,w)].$
\end{rmk}}

Let $\Big((\g,[\cdot,\cdot],\Courant{\cdot,\cdot,\cdot}),(V;\rho,\mu)\Big)$ be a \LYP pair, and suppose that $(\g,[\cdot,\cdot]_t,\Courant{\cdot,\cdot,\cdot}_t;\rho_t,\mu_t)$ and $(\g,[\cdot,\cdot]_t',\Courant{\cdot,\cdot,\cdot}_t';\rho_t',\mu_t')$ are two linear deformations of $\Big((\g,[\cdot,\cdot],\Courant{\cdot,\cdot,\cdot}),(V;\rho,\mu)\Big)$. In the following, we denote $D_{\rho_t,\mu_t}$ and $D_{\rho_t',\mu_t'}$ by $D_t$ and $D_t'$ respectively.

\begin{defi}\label{defi:deformation}
\begin{itemize}
\item [\rm (i)]Two linear deformations $(\g,[\cdot,\cdot]_t,\Courant{\cdot,\cdot,\cdot}_t;\rho_t,\mu_t)$ and $(\g,[\cdot,\cdot]_t',\Courant{\cdot,\cdot,\cdot}_t';\rho_t',\mu_t')$ of $(\g,\rho,\mu)$ are called {\bf equivalent} if there exists an isomorphism $({\Id}_\g+tN,{\Id}_V+tS)$ from $(\g,[\cdot,\cdot]_t',\Courant{\cdot,\cdot,\cdot}_t';\rho_t',\mu_t')$ to $(\g,[\cdot,\cdot]_t,\Courant{\cdot,\cdot,\cdot}_t;\rho_t,\mu_t)$.
\item [\rm (ii)] A linear deformation $(\g,[\cdot,\cdot]_t,\Courant{\cdot,\cdot,\cdot}_t;\rho_t,\mu_t)$ is said to be {\bf trivial} if it is equivalent to $(\g,[\cdot,\cdot],\Courant{\cdot,\cdot,\cdot};\rho,\mu)$ itself.
\end{itemize}
\end{defi}

Suppose that two deformations $(\g,\br__t,\ltp_t;\rho_t,\mu_t)$ and $(\g,\br__t',\ltp_t';\rho_t',\mu_t')$ are equivalent, for $(N,S)\in \huaC^1(\g,\rho,\mu)=\gl(\g)\times\gl(V)$, by Definition \ref{defi:homo} and Definition \ref{defi:deformation}, we obtain that
\begin{eqnarray}
    \label{ee1}({\Id}_\g+tN)[x,y]_t'&=&[({\Id}_\g+tN)x,({\Id}_\g+tN)y]_t,\\
    \label{ee2}({\Id}_\g+tN)\Courant{x,y,z}_t'&=&\Courant{({\Id}_\g+tN)x,({\Id}_\g+tN)y,({\Id}_\g+tN)z}_t,\\
    \label{eqv1}({\Id}_V+tS)\circ \rho_t'(x)&=&\rho_t\Big(({\Id}_\g+tN)x\Big)\circ ({\Id}_V+tS),\\
    \label{eqv2}({\Id}_V+tS)\circ \mu_t'(x,y)&=&\mu_t\Big(({\Id}_\g+tN)x,({\Id}_\g+tN)y\Big)\circ ({\Id}_V+tS),\\
    \label{eqv3}({\Id}_V+tS)\circ D_t'(x,y)&=&D_t\Big(({\Id}_\g+tN)x,({\Id}_\g+tN)y\Big)\circ ({\Id}_V+tS),
    \end{eqnarray}
which implies that
\begin{eqnarray*}
\phi'(x,y)-\phi(x,y)&=&[Nx,y]+[x,Ny]-N[x,y]=\Big(\Delta_{\rm I}(N,S)\Big)_1(x,y),\\
\varphi_1'(x,y,z)-\varphi_1(x,y,z)&=&\Courant{Nx,y,z}+\Courant{x,Ny,z}+\Courant{x,y,Nz}-N\Courant{x,y,z}=\Big(\Delta_{\rm II}(N,S)\Big)_1(x,y,z),\\
\varrho'(x)v-\varrho(x)v&=&\rho(x)(Sv)+\rho(Nx)v-S(\rho(x)v)=\Big(\Delta_{\rm I}(N,S)\Big)_2(x,v),\\
\varpi_1'(x,y)v-\varpi_1(x,y)v&=&\mu(x,Ny)v+\mu(Nx,y)v+\mu(x,y)(Sv)-S(\mu(x,y)v)=\Big(\Delta_{\rm II}(N,S)\Big)_2(x,y,v),\\
\huaD_1'(x,y)v-\huaD_1(x,y)v&=&D(x,Ny)v+D(Nx,y)v+D(x,y)(Sv)-S(D(x,y)v)=\Big(\Delta_{\rm II}(N,S)\Big)_3(v,x,y).
\end{eqnarray*}

Thus, we have the following theorem.
\begin{thm}
If two linear deformations $(\g,\br__t,\ltp_t;\rho_t,\mu_t)$ and $(\g,\br__t',\ltp_t';\rho_t',\mu_t')$ of a \LYP pair $\Big((\g,\br,,\ltp),(V;\rho,\mu)\Big)$ are equivalent, which are generated by $(\phi,\varphi_i;\varrho,\varpi_i)$ and $(\phi',\varphi_i';\varrho',\varpi_i')$ respectively. Then $(\phi+\varrho,\varphi_1+\varpi_1)$ and $(\phi'+\varrho',\varphi_1'+\varpi_1')$ are in the same cohomology class of $\huaH^2(\g,\rho,\mu)$.
\end{thm}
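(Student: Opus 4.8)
The plan is to show that the two deformations determine $2$-cocycles $\omega,\omega'\in\huaZ^2(\g,\rho,\mu)$ whose difference is exactly the coboundary of the $1$-cochain $(N,S)$ supplied by the equivalence, so that they represent one and the same class in $\huaH^2(\g,\rho,\mu)$. By the first-order analysis of Proposition \ref{fund:pro}(i), the leading terms of each deformation assemble into a $2$-cocycle whose nonzero components are $(\phi,\varphi_1)$ in the first slot, $(\varrho,\huaD_1)$ in the second, and the $\mu$-deformation $\varpi_1$ in the third, and likewise with primes for the other deformation; these are precisely the data packaged by the representatives $(\phi+\varrho,\varphi_1+\varpi_1)$ and $(\phi'+\varrho',\varphi_1'+\varpi_1')$. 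Thus $\omega,\omega'\in\huaZ^2(\g,\rho,\mu)$, and all that remains is to exhibit $\omega'-\omega$ as an element of $\huaB^2(\g,\rho,\mu)$.

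First I would unwind the hypothesis of equivalence. By Definition \ref{defi:deformation}(i) together with Definition \ref{defi:homo}, the pair $({\Id}_\g+tN,{\Id}_V+tS)$, with $(N,S)\in\gl(\g)\times\gl(V)=\huaC^1(\g,\rho,\mu)$, is an isomorphism of \LYP pairs from the primed deformation to the unprimed one; concretely this is Eqs.\ \eqref{ee1}--\eqref{eqv3}. I would then expand each of these five identities as a polynomial in $t$. The constant ($t^0$) terms merely reproduce the undeformed brackets and the original representation axioms, hence hold automatically and carry no information, while the higher-order ($t^2,t^3$) terms involve $\varphi_2,\varpi_2,\huaD_2$ and are irrelevant to a statement about $\huaH^2$. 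All the content therefore sits in the coefficient of $t^1$, and collecting these coefficients reproduces precisely the five identities displayed just before the statement, namely the expressions for $\phi'-\phi$, $\varphi_1'-\varphi_1$, $\varrho'-\varrho$, $\varpi_1'-\varpi_1$, and $\huaD_1'-\huaD_1$.

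It then remains to read these five identities, slot by slot, as the single assertion $\omega'-\omega=\Delta(N,S)$, using the coboundary formulas for $\Delta$ on a $1$-cochain. The first slot gives $(\phi'-\phi,\varphi_1'-\varphi_1)=\big((\Delta_{\rm I}(N,S))_1,(\Delta_{\rm II}(N,S))_1\big)=\delta_{\mathsf{LieY}}N$; the second slot gives $(\varrho'-\varrho,\huaD_1'-\huaD_1)=\big((\Delta_{\rm I}(N,S))_2,(\Delta_{\rm II}(N,S))_2\big)$; and the third slot gives $\varpi_1'-\varpi_1=(\Delta_{\rm II}(N,S))_3$. Since in degree two the third-slot $f$-position is trivial and $(\Delta_{\rm I}(N,S))_3$ coincides with $(\Delta_{\rm I}(N,S))_2$, no further condition is produced. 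Hence $\omega'-\omega=\Delta(N,S)\in\huaB^2(\g,\rho,\mu)$, so $[\omega]=[\omega']$ in $\huaH^2(\g,\rho,\mu)$, which is the claim.

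The hard part is the correct bookkeeping in this final matching: one must keep straight which first-order datum occupies which slot of $\huaC^2(\g,\rho,\mu)$, and in particular must not conflate the $\mu$-type datum $\varpi_1$, which lands in the third component through $(\Delta_{\rm II})_3$, with the induced $D$-type datum $\huaD_1$, which lands in the second component through $(\Delta_{\rm II})_2$. Both are $\gl(V)$-valued and superficially alike, yet $\huaD_1$ is not an independent parameter but is forced by $\varrho$ and $\varpi_1$ through its defining formula; misassigning them would spoil the identification. Once the slots are correctly allotted, the extraction of the $t^1$-coefficients is routine and the equality $\omega'-\omega=\Delta(N,S)$ is immediate.
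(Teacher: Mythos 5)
Your proposal is correct and follows essentially the same route as the paper: the paper's proof is precisely the computation displayed immediately before the theorem, where the equivalence conditions \eqref{ee1}--\eqref{eqv3} are expanded in $t$ and the coefficients of $t$ are identified with the components of $\Delta(N,S)$, so the difference of the two $2$-cocycles is the coboundary of the $1$-cochain $(N,S)$. Note also that your slot bookkeeping (matching $\huaD_1'-\huaD_1$ with $\big(\Delta_{\rm II}(N,S)\big)_2$ and $\varpi_1'-\varpi_1$ with $\big(\Delta_{\rm II}(N,S)\big)_3$) is the one consistent with the paper's definition of $\Delta$ on $1$-cochains; the paper's own display interchanges those two labels, which is a typographical slip there rather than a defect in your argument.
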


By a straightforward computation,  a linear deformation $(\g,[\cdot,\cdot]_t,\Courant{\cdot,\cdot,\cdot}_t;\rho_t,\mu_t)$ of a \LYP pair $\Big((\g,[\cdot,\cdot],\Courant{\cdot,\cdot,\cdot}),(V;\rho,\mu)\Big)$ is trivial if and only if the following equalities hold:
\begin{eqnarray}
\begin{cases}\label{Nij1}
\phi(x,y)&=[Nx,y]+[x,Ny]-N[x,y],\\
~N\phi(x,y)&=[Nx,Ny],\\
\varphi_1(x,y,z)&=\Courant{Nx,y,z}+\Courant{x,Ny,z}+\Courant{x,y,Nz}-N\Courant{x,y,z},\\
\varphi_2(x,y,z)&=\Courant{Nx,Ny,z}+\Courant{x,Ny,Nz}+\Courant{Nx,y,Nz}-N\varphi_1(x,y,z),\\
N\varphi_2(x,y,z)&=\Courant{Nx,Ny,Nz},\\
\end{cases}\\
\begin{cases}\label{Nij4}
\varrho(x)&=\rho(Nx)+\rho(x)\circ S-S\circ\rho(x),\\
\rho(Nx)\circ S&=S\circ \varrho(x),\\
\varpi_1(x,y)&=\mu(Nx,y)+\mu(x,Ny)+\mu(x,y)\circ S-S\circ \mu(x,y),\\
\varpi_2(x,y)&=\mu(Nx,y)\circ S+\mu(x,Ny)\circ S+\mu(Nx,Ny)-S\circ \varpi_1(x,y),\\
~\mu(Nx,Ny)\circ S&=S\circ \varpi_2(x,y).
\end{cases}
\end{eqnarray}

\begin{defi}
Let $\Big((\g,[\cdot,\cdot],\Courant{\cdot,\cdot,\cdot}),(V;\rho,\mu)\Big)$ be a \LYP pair, $N\in \gl(\g)$ and $S\in \gl(V)$ linear maps on $\g$ and $V$ respectively. If Eqs. \eqref{Nij1}-\eqref{Nij4} hold, then $(N,S)$ is called a {\bf Nijenhuis structure} on $(\g,\rho,\mu)$.
\end{defi}

\begin{rmk}
Note that \eqref{Nij1} means that $N$ is a Nijenhuis operator on the Lie-Yamaguti algebra $(\g,[\cdot,\cdot],\Courant{\cdot,\cdot,\cdot})$, where $\phi:=[\cdot,\cdot]_N$ and $\varphi_2:=\ltp_N$ are deformed brackets. Besides, $(\br_{_N},\ltp_N)$  is a \LYA ~structure on $\g$ and thus $N$ is a homomorphism from \LYA ~$(\g,\br_{_N},\ltp_N)$ to $(\g,\br,,\ltp)$. See \cite{ShengZhao} for more details of Nijenhuis operators on \LYA s.
\end{rmk}

Note that Eqs. \eqref{Nij4} is equivalent to the following equalities:
\begin{eqnarray}
\label{Nijstru1}\rho(Nx)(Sv)&=&S\Big(\rho(x)(Sv)-\rho(Nx)v\Big)-S^2(\rho(x)v),\\
\label{Nijstru2}\mu(Nx,Ny)(Sv)&=&S\Big(\mu(Nx,y)(Sv)+\mu(x,Ny)(Sv)+\mu(Nx,Ny)v\Big)\\
~ \nonumber&&-S^2\Big(\mu(Nx,y)v+\mu(x,Ny)v+\mu(x,y)(Sv)\Big)+S^3\Big(\mu(x,y)v\Big).
\end{eqnarray}

\begin{lem}\label{fundlem}
Let $(N,S)$ be a Nijenhuis structure on a \LYP pair $\Big((\g,[\cdot,\cdot],\Courant{\cdot,\cdot,\cdot}),(V;\rho,\mu)\Big)$. Then we have
\begin{eqnarray}
\label{Nijstru3}D(Nx,Ny)(Sv)&=&S\Big(D(Nx,y)(Sv)+D(x,Ny)(Sv)+D(Nx,Ny)v\Big)\\
~ \nonumber&&-S^2\Big(D(Nx,y)v+D(x,Ny)v+D(x,y)(Sv)\Big)+S^3\Big(D(x,y)v\Big).
\end{eqnarray}
\end{lem}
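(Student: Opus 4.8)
The plan is to exploit the fact that $D$ is not independent data but is assembled from $\rho$, $\mu$ and the binary bracket through \eqref{rep}, namely $D(x,y)=\mu(y,x)-\mu(x,y)+[\rho(x),\rho(y)]-\rho([x,y])$. A Nijenhuis structure already forces the single-operator relation \eqref{Nijstru1} for $\rho$, the relation \eqref{Nijstru2} for $\mu$, and (through \eqref{Nij1}) the compatibility of $N$ with both brackets. Since \eqref{Nijstru3} has exactly the same shape as \eqref{Nijstru2} with $\mu$ replaced by $D$, the strategy is to substitute \eqref{rep} into both sides of \eqref{Nijstru3} and reduce every resulting term to these known relations.

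To organize the reduction I would first attach to an arbitrary $\gl(V)$-valued bilinear map $\nu$ on $\g$ the cubic obstruction
$$\mathcal{P}_\nu(x,y,v):=\nu(Nx,Ny)(Sv)-S\big(\nu(Nx,y)(Sv)+\nu(x,Ny)(Sv)+\nu(Nx,Ny)v\big)+S^2\big(\nu(Nx,y)v+\nu(x,Ny)v+\nu(x,y)(Sv)\big)-S^3\big(\nu(x,y)v\big).$$
With this notation \eqref{Nijstru2} reads $\mathcal{P}_\mu=0$ and the assertion is precisely $\mathcal{P}_D=0$. Because $\mathcal{P}_\nu$ is linear in $\nu$, the decomposition \eqref{rep} gives $\mathcal{P}_D=\mathcal{P}_{\mu^{\mathrm{op}}}-\mathcal{P}_{\mu}+\mathcal{P}_{[\rho,\rho]}-\mathcal{P}_{\rho([\cdot,\cdot])}$, where $\mu^{\mathrm{op}}(x,y)=\mu(y,x)$. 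The first two summands vanish immediately by \eqref{Nijstru2}: the first because $\mathcal{P}_{\mu^{\mathrm{op}}}(x,y,v)=\mathcal{P}_{\mu}(y,x,v)$, as the middle pair of terms in $\mathcal{P}$ is symmetric in $x,y$. Thus everything reduces to showing that the contribution of $[\rho(x),\rho(y)]-\rho([x,y])$ to $\mathcal{P}$ vanishes.

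For the remaining two terms I would rewrite \eqref{Nijstru1} as an operator identity expressing $\rho(Nx)\circ S$ in terms of $S\circ\rho(Nx)$, $S\circ\rho(x)\circ S$ and $S^2\circ\rho(x)$, and use it to push every factor of $S$ through the two $\rho$'s occurring in $\rho(x)\rho(y)$; simultaneously I would use the bracket part of \eqref{Nij1}, in particular $[Nx,Ny]=N\big([Nx,y]+[x,Ny]-N[x,y]\big)$ together with its first line, to control the arguments $[Nx,Ny]$, $[Nx,y]$, $[x,Ny]$ appearing inside $\rho(-)$ in $\mathcal{P}_{\rho([\cdot,\cdot])}$. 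The mechanism is that the $S$-transport relation for $\rho$ and the $N$-compatibility of the bracket fit together exactly so that $\mathcal{P}_{[\rho,\rho]}$ and $\mathcal{P}_{\rho([\cdot,\cdot])}$ cancel.

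I expect $\mathcal{P}_{[\rho,\rho]}$ to be the main obstacle: it is quadratic in $\rho$, so the three powers of $S$ in $\mathcal{P}$ must be transported across two separate $\rho$-factors, and each transport introduces lower-order $S$-terms via \eqref{Nijstru1} that must be matched against the corresponding expansion of $\mathcal{P}_{\rho([\cdot,\cdot])}$. An alternative route that avoids this bookkeeping is to establish directly the $D$-analogues of \eqref{Nij4}, obtained by matching the coefficients of $t^0,t^1,t^2$ in the trivial-deformation conjugation identity $(\Id_V+tS)\circ D_t'(x,y)=D_t\big((\Id_\g+tN)x,(\Id_\g+tN)y\big)\circ(\Id_V+tS)$ with $D_t'=D$; these three relations express $\huaD_1$, $\huaD_2$ and the constraint $D(Nx,Ny)\circ S=S\circ\huaD_2(x,y)$ in terms of $D$, $N$, $S$, and then \eqref{Nijstru3} follows by exactly the elimination of $\huaD_1,\huaD_2$ that produces \eqref{Nijstru2} from \eqref{Nij4}. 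In this second route the work is shifted to verifying those three coefficient relations, which in turn reduces to \eqref{Nij1} and \eqref{Nij4} through the formulas defining $\huaD_1$ and $\huaD_2$ from $\varrho,\varpi_1,\varpi_2$.
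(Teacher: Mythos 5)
Your primary route is correct and is essentially the paper's own computation (carried out in the paper's source but omitted from the printed proof): expand $D$ through \eqref{rep}, dispose of the two $\mu$-summands by \eqref{Nijstru2} and the $x\leftrightarrow y$ symmetry of your $\mathcal{P}$, and reduce the remaining terms to \eqref{Nijstru1} and the Nijenhuis identity for $N$ from \eqref{Nij1}. The one simplification worth noting is that the cross-cancellation you brace for never needs to be organized: $\mathcal{P}_{[\rho(\cdot),\rho(\cdot)]}=0$ follows from the operator form of \eqref{Nijstru1} alone by pushing $S$ through both $\rho$-factors, and $\mathcal{P}_{\rho([\cdot,\cdot])}=0$ follows from \eqref{Nijstru1} applied to $[x,y]$ and to $[x,y]_N:=[Nx,y]+[x,Ny]-N[x,y]$ together with $[Nx,Ny]=N[x,y]_N$, so the two pieces vanish separately rather than against each other.
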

\begin{proof}
The proof only involves computation, so we omit the details.
\end{proof}
\begin{ex}
Let $N$ be a Nijenhuis operator on a \LYA ~$(\g,\br,,\ltp)$. Then the pair $(N,N)$ is a Nijenhuis structure on the \LYP pair $\Big((\g,\br,,\ltp),(\g;\ad,\frkR)\Big)$.
\end{ex}

Obviously, a trivial deformation of a \LYP pair gives rise to a Nijenhuis structure. Furthermore, the converse is also valid. First, we need a lemma.
\begin{lem}\label{prelem}
Let $\Big((\g,[\cdot,\cdot],\Courant{\cdot,\cdot,\cdot}),(V;\rho,\mu)\Big)$ be a \LYP pair, and $\h$ and $W$ vector spaces, where $\h$ is endowed with a pair of two operations $([\cdot,\cdot]',\Courant{\cdot,\cdot,\cdot}')$ and $W$ is endowed with two linear maps $\varrho:W\to \gl(\h),~\varpi:\otimes^2W\to \gl(\h)$. If there exist two isomorphisms between vector spaces $f:\h\to\g$ and $g:W\to V$, such that
\begin{eqnarray*}
f([x,y]')&=&[f(x),f(y)],\\
~f(\Courant{x,y,z}')&=&\Courant{f(x),f(y),f(z)},\\
~g(\varrho(x)w)&=&\rho(f(x))(g(w)),\\
~g(\varpi(x,y)w)&=&\mu(f(x),f(y))(g(w)),\quad \forall x,y,z\in \h,w\in W.
\end{eqnarray*}
then $\Big((\h,[\cdot,\cdot]',\Courant{\cdot,\cdot,\cdot}'),(W;\varrho,\varpi)\Big)$ is a \LYP pair
\end{lem}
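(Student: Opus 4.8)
The plan is to transport all the structure from the given \LYP pair $(\g,\rho,\mu)$ back along the two vector-space isomorphisms $f$ and $g$. The guiding principle is that, since $f$ and $g$ are injective, any multilinear identity on $\h$ (resp.\ on $W$) holds as soon as its image under $f$ (resp.\ under $g$) vanishes; and the four compatibility hypotheses let us push $f$ (resp.\ $g$) through each operation, so that the image identity becomes exactly one of the defining identities of $(\g,\rho,\mu)$, which holds by assumption.

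First I would check that $(\h,[\cdot,\cdot]',\Courant{\cdot,\cdot,\cdot}')$ is a \LYA. The first two hypotheses say $f([x,y]')=[f(x),f(y)]$ and $f(\Courant{x,y,z}')=\Courant{f(x),f(y),f(z)}$, i.e.\ $[\cdot,\cdot]'=f^{-1}\circ[\cdot,\cdot]\circ(f\otimes f)$ and similarly for the ternary bracket. Applying $f$ to the left-hand side of each of the four axioms in Definition \ref{LY} and repeatedly using these two compatibilities, every term such as $f([[x,y]',z]')$ becomes $[[f(x),f(y)],f(z)]$, every $f(\Courant{x,y,z}')$ becomes $\Courant{f(x),f(y),f(z)}$, and so on; the resulting expression is precisely the corresponding axiom of Definition \ref{LY} evaluated at $f(x),f(y),f(z),\dots\in\g$, hence vanishes. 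Since $f$ is injective, the original expression vanishes too, so all four Lie-Yamaguti identities hold on $\h$.

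Next I would verify that $(W;\varrho,\varpi)$ is a representation of $\h$. The last two hypotheses read $\varrho(x)=g^{-1}\circ\rho(f(x))\circ g$ and $\varpi(x,y)=g^{-1}\circ\mu(f(x),f(y))\circ g$. The key preliminary step is to intertwine the derived operators: using the formula \eqref{rep} together with these relations and $f([x,y]')=[f(x),f(y)]$, a short computation entirely parallel to the derivation of \eqref{homo3} gives $g\big(D_{\varrho,\varpi}(x,y)w\big)=D_{\rho,\mu}\big(f(x),f(y)\big)\big(g(w)\big)$ for all $x,y\in\h$ and $w\in W$. With the three intertwining relations for $\varrho$, $\varpi$, and $D$ in hand, each of the five axioms of Definition \ref{defi:representation} for $(W;\varrho,\varpi)$ is obtained by applying $g$ to its left-hand side, pushing $g$ through every $\varrho$, $\varpi$, and $D$, replacing $f([x,y]')$ by $[f(x),f(y)]$ and $f(\Courant{x,y,z}')$ by $\Courant{f(x),f(y),f(z)}$, and recognising the outcome as the same axiom for $(V;\rho,\mu)$ at the arguments $f(x),f(y),\dots$ applied to $g(w)$; injectivity of $g$ then yields the axiom on $W$.

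The computations are all routine transports, so I would not grind through them; the only steps demanding genuine care are the intertwining relation for $D$ and the bookkeeping of operator orderings in those axioms of Definition \ref{defi:representation} that involve a composition of two of the operators $\rho,\mu,D$ (for instance the fourth and fifth identities), where one must let $g$ pass through the inner operator before the outer one. Once the $D$-intertwining is established, however, these identities collapse term by term to their counterparts in $(\g,\rho,\mu)$, completing the proof that $\big((\h,[\cdot,\cdot]',\Courant{\cdot,\cdot,\cdot}'),(W;\varrho,\varpi)\big)$ is a \LYP pair.
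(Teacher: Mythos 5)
Your proposal is correct and is precisely the ``direct computation'' that the paper's proof of Lemma \ref{prelem} omits: transporting the structure along the bijections $f$ and $g$, establishing the intertwining relation $g\big(D_{\varrho,\varpi}(x,y)w\big)=D_{\rho,\mu}\big(f(x),f(y)\big)\big(g(w)\big)$ from \eqref{rep}, and using injectivity to pull each axiom of Definitions \ref{LY} and \ref{defi:representation} back from $(\g,\rho,\mu)$ to $(\h,\varrho,\varpi)$. No gap; your identification of the $D$-intertwining step as the only point needing care is exactly right.
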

\begin{proof}
It is a direct computation, so we omit the details.
\end{proof}

\begin{thm}\label{fund:thm}
Let $(N,S)$ be a Nijenhuis structure on a \LYP pair $\Big((\g,[\cdot,\cdot],\Courant{\cdot,\cdot,\cdot}),(V;\rho,\mu)\Big)$. Then  a linear deformation can be obtained by setting for any $x,y,z\in \g$
\begin{eqnarray}
\begin{cases}\label{trv1}
\phi(x,y)&=[Nx,y]+[x,Ny]-N[x,y],\\
\varphi_1(x,y,z)&=\Courant{Nx,y,z}+\Courant{x,Ny,z}+\Courant{x,y,Nz}-N\Courant{x,y,z},\\
\varphi_2(x,y,z)&=\Courant{Nx,Ny,z}+\Courant{x,Ny,Nz}+\Courant{Nx,y,Nz}-N\varphi_1(x,y,z),\\
\varrho(x)&=\rho(Nx)+\rho(x)\circ S-S\circ\rho(x),\\
\varpi_1(x,y)&=\mu(Nx,y)+\mu(x,Ny)+\mu(x,y)\circ S-S\circ \mu(x,y),\\
\varpi_2(x,y)&=\mu(Nx,y)\circ S+\mu(x,Ny)\circ S+\mu(Nx,Ny)-S\circ \varpi_1(x,y).
\end{cases}
\end{eqnarray}
Moreover, this deformation  is trivial.
\end{thm}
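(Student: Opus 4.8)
The plan is to realize $f_t:=\Id_\g+tN\in\gl(\g)$ and $g_t:=\Id_V+tS\in\gl(V)$ as an isomorphism intertwining the candidate deformation \eqref{trv1} with the undeformed pair, and then to feed this into Lemma~\ref{prelem}; the bulk of the work reduces to matching coefficients of $t$. First I would record what \eqref{trv1} and the Nijenhuis hypothesis amount to. Expanding $f_t([x,y]_t)$ and $[f_tx,f_ty]$ (the latter in the \emph{undeformed} bracket) and comparing powers of $t$, the coefficient of $t^0$ is automatic, that of $t^1$ is exactly the defining formula for $\phi$ in \eqref{trv1}, and that of $t^2$ is the condition $N\phi(x,y)=[Nx,Ny]$ from \eqref{Nij1}; hence $f_t([x,y]_t)=[f_tx,f_ty]$ holds as an identity of polynomials in $t$. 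In the same manner the formulas for $\varphi_1,\varphi_2$ in \eqref{trv1} account for the coefficients of $t^1,t^2$ and the condition $N\varphi_2(x,y,z)=\Courant{Nx,Ny,Nz}$ for that of $t^3$, giving $f_t(\Courant{x,y,z}_t)=\Courant{f_tx,f_ty,f_tz}$; while the formulas for $\varrho,\varpi_1,\varpi_2$, together with the two $S$-conditions $\rho(Nx)\circ S=S\circ\varrho(x)$ and $\mu(Nx,Ny)\circ S=S\circ\varpi_2(x,y)$ from \eqref{Nij4}, give $g_t(\rho_t(x)v)=\rho(f_tx)(g_tv)$ and $g_t(\mu_t(x,y)v)=\mu(f_tx,f_ty)(g_tv)$. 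In short, the hypothesis that $(N,S)$ is a Nijenhuis structure is precisely the assertion that $(f_t,g_t)$ satisfies the homomorphism relations \eqref{ee1}--\eqref{eqv3} relative to the undeformed pair, as polynomial identities in $t$.

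The substantive point is then that \eqref{trv1} really defines a \LYP pair for every $t$, i.e. generates a linear deformation. Here I would use that $\g$ and $V$ are finite-dimensional over the infinite field $\mathbb{K}$: the map $f_t$ fails to be invertible only when $\det(\Id_\g+tN)=0$, and similarly for $g_t$, so $f_t$ and $g_t$ are simultaneously invertible for all but finitely many $t$. For each such $t$ the displayed identities are exactly the hypotheses of Lemma~\ref{prelem}, taken with $\h=\g$, $W=V$, the operations $[\cdot,\cdot]_t,\Courant{\cdot,\cdot,\cdot}_t$ on $\g$, the maps $\rho_t,\mu_t$ on $V$, and the isomorphisms $f=f_t,\ g=g_t$; the lemma then yields that $\big((\g,[\cdot,\cdot]_t,\Courant{\cdot,\cdot,\cdot}_t),(V;\rho_t,\mu_t)\big)$ is a \LYP pair. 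Finally, after substituting \eqref{trv1} the four Lie--Yamaguti axioms for $(\g,[\cdot,\cdot]_t,\Courant{\cdot,\cdot,\cdot}_t)$ and the representation axioms for $(V;\rho_t,\mu_t)$ become polynomial identities in $t$ of bounded degree with values in $\g$ or in $\gl(V)$; since they hold on an infinite set of $t$, they hold identically, so the deformed data form a \LYP pair for \emph{all} $t$. This establishes that \eqref{trv1} generates a linear deformation.

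For triviality, observe that the associated $1$-cochain is $(N,S)\in\huaC^1(\g,\rho,\mu)$ and that \eqref{trv1} together with the Nijenhuis hypothesis is exactly Eqs.~\eqref{Nij1}--\eqref{Nij4}; hence, by the characterization recorded just before the definition of a Nijenhuis structure---a linear deformation is trivial if and only if \eqref{Nij1}--\eqref{Nij4} hold---the deformation is trivial. Equivalently, the polynomial identities of the first paragraph say that $(f_t,g_t)$ is an isomorphism from this deformation onto the constant deformation $(\g,[\cdot,\cdot],\Courant{\cdot,\cdot,\cdot};\rho,\mu)$, which is triviality in the sense of Definition~\ref{defi:deformation}.

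I expect the main obstacle to be precisely the verification that \eqref{trv1} yields genuine \LYA~and representation structures for every $t$: a frontal attack would require checking all four Lie--Yamaguti axioms for $(\g,[\cdot,\cdot]_t,\Courant{\cdot,\cdot,\cdot}_t)$ and all representation axioms for $(V;\rho_t,\mu_t)$ as polynomials in $t$. The isomorphism-and-polynomiality route through Lemma~\ref{prelem} sidesteps this, reducing everything to coefficient matching in $t$ plus the genericity of invertibility of $\Id_\g+tN$ and $\Id_V+tS$.
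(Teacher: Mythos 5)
Your proposal is correct and follows essentially the same route as the paper: the Nijenhuis conditions are read as the statement that $(\Id_\g+tN,\Id_V+tS)$ intertwines the candidate deformed structures with the undeformed ones, invertibility holds for generic $t$, and Lemma~\ref{prelem} then yields the \LYP pair structure, with triviality immediate from the intertwining identities. In fact you are slightly more careful than the paper, which only invokes ``$t$ sufficiently small'': your polynomial-identity argument extending the axioms from the cofinitely many invertible values of $t$ to \emph{all} $t$ (and working over an arbitrary infinite field) fills a step the paper leaves implicit.
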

\begin{proof}
Let $(N,S)$ be a Nijenhuis structure on a \LYP pair $(\g,\rho,\mu)$. Therefore, for any $t$, there holds that
\begin{eqnarray*}
({\Id}_\g+tN)[x,y]_t&=&[({\Id}_\g+tN)x,({\Id}_\g+tN)y],\\
 ({\Id}_\g+tN)\Courant{x,y,z}_t&=&\Courant{({\Id}_\g+tN)x,({\Id}_\g+tN)y,({\Id}_\g+tN)z},\\
  ({\Id}_V+tS)\circ \rho_t(x)&=&\rho\Big(({\Id}_\g+tN)x\Big)\circ ({\Id}_V+tS),\\
   ({\Id}_V+tS)\circ \mu_t(x,y)&=&\mu\Big(({\Id}_\g+tN)x,({\Id}_\g+tN)y\Big)\circ ({\Id}_V+tS),\quad \forall x,y,z\in \g.
\end{eqnarray*}
Then for $t$ sufficiently small, ${\Id}_\g+tN$ and ${\Id}_V+tS$ are isomorphisms between vector spaces. By Lemma \ref{prelem}, we have that $\Big((\g,\br__t,\ltp_t),(V,\rho_t,\mu_t)\Big)$ is a \LYP pair, which means that the linear maps $(\phi,\varphi_i;\varrho,\varpi_i)~(i=1,2)$ given by Eqs. \eqref{trv1} generate a linear deformation. It is obvious that this deformation is trivial. This completes the proof.
\end{proof}

The following proposition demonstrates that a Nijenhuis structure gives rise to a Nijenhuis operator on the semidirect product \LYA.
\begin{pro}\label{pro:Nij}
Let $(N,S)$ be a Nijenhuis structure on a \LYP pair $\Big((\g,[\cdot,\cdot],\Courant{\cdot,\cdot,\cdot}),(V;\rho,\mu)\Big)$. Then $N+S$ is a Nijenhuis operator on the semidirect product Lie-Yamaguti algebra $\g\ltimes V$.
\end{pro}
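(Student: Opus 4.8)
The plan is to verify directly the two defining identities of a Nijenhuis operator for the diagonal map $P:=N+S\in\gl(\g\oplus V)$, acting by $P(x+u)=Nx+Su$, on the semidirect product $\g\ltimes V$. Writing $X=x+u$, $Y=y+v$, $Z=z+w$ and setting
\[
[X,Y]_P:=[PX,Y]_\ltimes+[X,PY]_\ltimes-P[X,Y]_\ltimes,
\]
together with the first- and second-order ternary deformed operations $\varphi_1^{P}$ and $\varphi_2^{P}$ built from $\Courant{\cdot,\cdot,\cdot}_\ltimes$ exactly as $\varphi_1,\varphi_2$ in \eqref{Nij1} are built from $\Courant{\cdot,\cdot,\cdot}$, the task is to establish
\[
[PX,PY]_\ltimes=P[X,Y]_P \qquad\text{and}\qquad \Courant{PX,PY,PZ}_\ltimes=P\varphi_2^{P}(X,Y,Z).
\]
The crucial structural observation is that $P$ preserves the splitting $\g\oplus V$ and that each semidirect product operation decomposes into a $\g$-valued part (involving only $[\cdot,\cdot]$, $\Courant{\cdot,\cdot,\cdot}$) and a $V$-valued part (involving only $\rho,\mu,D$). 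Hence each identity may be checked componentwise, the $\g$-component being insensitive to $S$ and the $V$-component being insensitive to the $\g$-brackets.

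For the binary identity, the $\g$-component is literally the first Nijenhuis identity for $N$ on $\g$, namely $N[x,y]_N=[Nx,Ny]$, which is part of the hypothesis \eqref{Nij1}. Expanding the $V$-component and collecting separately the terms carrying $v$ and those carrying $u$, each group collapses to the single relation
\[
\rho(Nx)(Sv)=S\rho(Nx)v+S\rho(x)(Sv)-S^2\rho(x)v,
\]
which is exactly the content of the second line of \eqref{Nij4} (equivalently \eqref{Nijstru1}). Thus the binary identity follows once the $\rho$-terms are sorted by the slot from which they arise.

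The ternary identity is handled by the same componentwise strategy but is considerably more involved, because $\Courant{X,Y,Z}_\ltimes$ has four summands ($\Courant{x,y,z}$, the $D$-term $D(x,y)w$, and the two $\mu$-terms $\mu(y,z)u$, $-\mu(x,z)v$), each interacting with $P$ and with the nested deformation defining $\varphi_2^{P}$. The $\g$-component again reduces to the ternary Nijenhuis identity $\Courant{Nx,Ny,Nz}=N\varphi_2(x,y,z)$ of \eqref{Nij1}. In the $V$-component the terms fall into two families: those built from $\mu$ (coming from the $\mu(y,z)u$ and $-\mu(x,z)v$ summands), which are absorbed by \eqref{Nijstru2}, and those built from $D$ (coming from $D(x,y)w$), which are absorbed by the identity \eqref{Nijstru3} established in Lemma \ref{fundlem}. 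Matching each family against the appropriate equation completes the verification.

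The main obstacle is purely organizational: the ternary $V$-component produces a large number of terms, and the argument succeeds precisely because \eqref{Nijstru2} and Lemma \ref{fundlem} were arranged beforehand so that the $\mu$-family and the $D$-family each close up independently. A conceptual cross-check, which in principle could replace the brute-force computation, is to invoke Theorem \ref{fund:thm}: the Nijenhuis structure $(N,S)$ generates a trivial linear deformation of the \LYP pair, and under the identification of the pair with its semidirect product this is a trivial deformation of $\g\ltimes V$ realized by the equivalence $\Id_{\g\oplus V}+t(N+S)$; since a trivial deformation of a \LYA ~is generated by a Nijenhuis operator, $N+S$ must be one.
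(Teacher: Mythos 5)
Your proposal is correct and follows essentially the same route as the paper's own proof: a direct componentwise verification on $\g\ltimes V$, where the $\g$-component reduces to the Nijenhuis identities for $N$ in \eqref{Nij1}, the binary $V$-component reduces to the second line of \eqref{Nij4} (your displayed relation, which is the sign-correct form of \eqref{Nijstru1}), and the ternary $V$-component splits into the $\mu$-terms handled by \eqref{Nijstru2} and the $D$-terms handled by Lemma \ref{fundlem}. The paper simply carries out this expansion explicitly, grouping terms exactly as you describe, so no gap remains.
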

\begin{proof}
Let $(N,S)$ be a Nijenhuis structure on a \LYP pair $\Big((\g,[\cdot,\cdot],\Courant{\cdot,\cdot,\cdot}),(V;\rho,\mu)\Big)$. Then we have
\begin{eqnarray*}
~ &&[(N+S)(x+u),(N+S)(y+v)]_\ltimes-(N+S)\Big([(N+S)(x+u),y+v]_\ltimes+[x+u,(N+S)(y+v)]_\ltimes\Big)\\
~ &&~~+(N+S)^2[x+u,y+v]_\ltimes\\
~ &=&\Big([Nx,Ny]+\rho(Nx)(Sv)-\rho(Ny)(Su)\Big)-(N+S)\Big([Nx,y]+\rho(Nx)v-\rho(y)(Su)\\
~ &&~~+[x,Ny]+\rho(x)(Sv)-\rho(Ny)u\Big)+(N^2+S^2)\Big([x,y]+\rho(x)v-\rho(y)u\Big)\\
~ &=&\Big([Nx,Ny]-N\big([Nx,y]+[x,Ny]-N[x,y]\big)\Big)+\Big(\rho(Nx)(Sv)-S\big(\rho(Nx)v-\rho(x)(Sv)+S(\rho(x)v)\big)\Big)\\
~ &&-\Big(\rho(Ny)(Su)-S\big(\rho(Ny)u-\rho(y)(Su)+S(\rho(y)u)\big)\Big)\\
~ &=&0.
\end{eqnarray*}
Moreover, we also have
\begin{eqnarray*}
~ &&\Courant{(N+S)(x+u),(N+S)(y+v),(N+S)(z+w)}_\ltimes\\
~ &=&\Courant{Nx,Ny,Nz}+D(Nx,Ny)(Sw)+\mu(Ny,Nz)(Su)-\mu(Nx,Nz)(Sv),
\end{eqnarray*}
and
\begin{eqnarray*}
~ &&(N+S)\Big(\Courant{(N+S)(x+u),(N+S)(y+v),z+w}_\ltimes+\Courant{x+u,(N+S)(y+v),(N+S)(z+w)}_\ltimes\\
~ &&~~+\Courant{(N+S)(x+u),y+v,(N+S)(z+w)}_\ltimes\Big)\\
~ &&~~-(N+S)^2\Big(\Courant{(N+S)(x+u),y+v,z+w}_\ltimes+\Courant{x+u,(N+S)(y+v),z+w}_\ltimes\\
~ &&~~+\Courant{x+u,y+v,(N+S)(z+w)}_\ltimes\Big)+(N+S)^3\Courant{x+u,y+v,z+w}_\ltimes\\
~ &=&(N+S)\Big(\Courant{Nx,Ny,z}+D(Nx,Ny)w+\mu(Ny,z)(Su)-\mu(Nx,z)(Sv)\\
~ &&~~+\Courant{Nx,y,Nz}+D(Nx,y)(Sw)+\mu(y,Nz)(Su)-\mu(Nx,Nz)v\\
~ &&~~+\Courant{x,Ny,Nz}+D(x,Ny)(Sw)+\mu(Ny,Nz)u-\mu(x,Nz)(Sv)\Big)\\
~ &&~~-(N^2+S^2)\Big(\Courant{Nx,y,z}+D(Nx,y)w+\mu(y,z)(Su)-\mu(Nx,z)v\\
~ &&~~+\Courant{x,Ny,z}+D(x,Ny)w+\mu(Ny,z)u-\mu(x,z)(Sv)\\
~ &&~~+\Courant{x,y,Nz}+D(x,y)(Sw)+\mu(y,Nz)u-\mu(x,Nz)v\Big)\\
~ &&~~+(N^3+S^3)\Big(\Courant{x,y,z}+D(x,y)w+\mu(y,z)u-\mu(x,z)v\Big)\\
~ &=&N\Big(\Courant{Nx,Ny,z}+\Courant{Nx,y,Nz}+\Courant{x,Ny,Nz}\Big)\\
~ &&~~-N^2\Big(\Courant{Nx,y,z}+\Courant{x,Ny,z}+\Courant{x,y,Nz}\Big)+N^3\Courant{x,y,z}\\
~ &&~~+S\Big(D(Nx,Ny)w+D(Nx,y)(Sw)+D(x,Ny)(Sw)\Big)\\
~ &&~~-S^2\Big(D(Nx,y)w+D(x,Ny)w+D(x,y)(Sw)\Big)+S^3(D(x,y)w)\\
~ &&~~+S\Big(\mu(Ny,z)(Su)+\mu(y,Nz)(Su)+\mu(Ny,Nz)u\Big)\\
~ &&~~-S^2\Big(\mu(y,z)(Su)+\mu(Ny,z)u+\mu(y,Nz)u\Big)+S^3(\mu(y,z)u)\\
~ &&~~-S\Big(\mu(Nx,z)(Sv)+\mu(Nx,Nz)v+\mu(x,Nz)(Sv)\Big)\\
~ &&~~+S^2\Big(\mu(Nx,z)v+\mu(x,z)(Sv)+\mu(x,Nz)v\Big)-S^3(\mu(x,z)v),
\end{eqnarray*}
which implies that
\begin{eqnarray*}
~ &&\Courant{(N+S)(x+u),(N+S)(y+v),(N+S)(z+w)}_\ltimes\\
~ &=&(N+S)\Big(\Courant{(N+S)(x+u),(N+S)(y+v),z+w}_\ltimes+\Courant{x+u,(N+S)(y+v),(N+S)(z+w)}_\ltimes\\
~ &&~~+\Courant{(N+S)(x+u),y+v,(N+S)(z+w)}_\ltimes\Big)\\
~ &&~~-(N+S)^2\Big(\Courant{(N+S)(x+u),y+v,z+w}_\ltimes+\Courant{x+u,(N+S)(y+v),z+w}_\ltimes\\
~ &&~~+\Courant{x+u,y+v,(N+S)(z+w)}_\ltimes\Big)+(N+S)^3\Courant{x+u,y+v,z+w}_\ltimes.
\end{eqnarray*}
This completes the proof.
\end{proof}

We introduce two linear maps $\hat\rho:\g \to \gl(V)$ and $\hat\mu:\otimes^2\g \to \gl(V)$ by
\begin{eqnarray}
\label{hatrho}\hat\rho(x)&=&\rho(Nx)+\rho(x)\circ S-S\circ \rho(x),\\
\label{hatmu}\hat\mu(x,y)&=&\mu(Nx,y)\circ S+\mu(x,Ny)\circ S+\mu(Nx,Ny)\\
~\nonumber &&-S\Big(\mu(Nx,y)+\mu(x,Ny)+\mu(x,y)\circ S\Big)+S^2\circ \mu(x,y),
\end{eqnarray}
\begin{lem}\label{prolem}
With the above notations, we have
\begin{eqnarray}
\label{hatD}\hat{D}(x,y):= D_{\hat\rho,\hat\mu}&=&D(Nx,y)\circ S+D(x,Ny)\circ S+D(Nx,Ny)\\
~\nonumber &&-S\Big(D(Nx,y)+D(x,Ny)+D(x,y)\circ S\Big)+S^2\circ D(x,y).
\end{eqnarray}
\end{lem}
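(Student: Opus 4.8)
The plan is to evaluate $\hat D=D_{\hat\rho,\hat\mu}$ directly from the defining formula \eqref{rep}, namely
$$\hat D(x,y)=\hat\mu(y,x)-\hat\mu(x,y)+[\hat\rho(x),\hat\rho(y)]-\hat\rho([x,y]_N),$$
where the relevant Lie-Yamaguti bracket is the $N$-deformed one $[x,y]_N=[Nx,y]+[x,Ny]-N[x,y]=\phi(x,y)$ rather than the original bracket. After substituting \eqref{hatrho} and \eqref{hatmu}, I would unfold every $D(\cdot,\cdot)$ appearing on the right-hand side of \eqref{hatD} through \eqref{rep}. This splits the asserted identity into two independent parts: a $\mu$-block assembled from the antisymmetrizations $\mu(b,a)-\mu(a,b)$, and a $\rho$-block assembled from $[\rho(a),\rho(b)]-\rho([a,b])$. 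Since no term on either side couples $\mu$-values with $\rho$-values, the two blocks may be matched separately.

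For the $\mu$-block, I would verify that $\hat\mu(y,x)-\hat\mu(x,y)$ equals the image of $M(a,b):=\mu(b,a)-\mu(a,b)$ under the same template in $N$ and $S$ that \eqref{hatmu} applies to $\mu$; this is a purely formal term-by-term comparison requiring no structural hypothesis, obtained by pairing the $S$-dressings of the swapped products against those of $M(Nx,y),M(x,Ny),M(Nx,Ny)$. For the $\rho$-block I would separate the pure operator products from the terms carrying a Lie-Yamaguti bracket. The products come from expanding the commutator $[\hat\rho(x),\hat\rho(y)]$ with $\hat\rho(x)=\rho(Nx)+\rho(x)\circ S-S\circ\rho(x)$ and comparing with the commutator parts of the four $D$'s on the right; these strings carry up to three factors of $S$, and they collapse to zero after repeated use of the Nijenhuis relation $\rho(Nx)\circ S=S\circ\hat\rho(x)$ (the second identity of \eqref{Nij4}, equivalently \eqref{Nijstru1}). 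This is the only place where the $\rho$-part of the Nijenhuis structure enters.

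The delicate step, and the one I expect to be the main obstacle, is the bracket bookkeeping: the single term $-\hat\rho([x,y]_N)$ on the left must absorb the three contributions $-\rho([Nx,y])$, $-\rho([x,Ny])$, $-\rho([Nx,Ny])$ buried inside $D(Nx,y)$, $D(x,Ny)$, $D(Nx,Ny)$ on the right. Here I would first invoke the Lie-Yamaguti-Nijenhuis identities \eqref{Nij1}, in the form $[Nx,y]+[x,Ny]=[x,y]_N+N[x,y]$ and $[Nx,Ny]=N[x,y]_N$, to re-express all bracket arguments through $[x,y]$ and $[x,y]_N$; the surviving $S$-weighted terms then cancel upon applying the relation $\rho(Nw)\circ S=S\circ\hat\rho(w)$ of \eqref{Nij4} to the element $w=[x,y]$. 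It is precisely this block that forces the use of the deformed bracket together with both families of Nijenhuis relations at once, and where the noncommutative strings in $S$ make the computation error-prone. Once it is settled, recombining the $\mu$-block with the $\rho$-block yields \eqref{hatD}.
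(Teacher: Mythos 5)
Your proposal is correct and is essentially the computation the paper has in mind (the paper omits it, saying only that ``the proof also involves computation''). In particular you correctly identify the two points on which the identity \eqref{hatD} hinges: that $D_{\hat\rho,\hat\mu}$ must be formed from \eqref{rep} using the deformed bracket $[x,y]_N$ rather than $[x,y]$, and that after splitting off the formally-cancelling $\mu$-block, the remaining commutator and bracket terms collapse precisely by the relation $\rho(Nz)\circ S=S\circ\hat\rho(z)$ of \eqref{Nij4} applied to $z=x$, $z=y$, and $z=[x,y]$, together with $[Nx,Ny]=N[x,y]_N$ and $[x,y]_N=[Nx,y]+[x,Ny]-N[x,y]$ from \eqref{Nij1}.
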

\begin{proof}
The proof also involves computation.
\end{proof}

\begin{pro}
Let $(N,S)$ be a Nijenhuis structure on a \LYP pair $\Big((\g,[\cdot,\cdot],\Courant{\cdot,\cdot,\cdot}),(V;\rho,\mu)\Big)$. Then $\Big((\g,[\cdot,\cdot]_N,\Courant{\cdot,\cdot,\cdot}_N),(V;\hat\rho,\hat\mu)\Big)$ is a \LYP pair, where $(\g,[\cdot,\cdot]_N,\Courant{\cdot,\cdot,\cdot}_N)$ is the deformed \LYA. Thus $(N,S)$ is a homomorphism from \LYP pair $\Big((\g,[\cdot,\cdot]_N,\Courant{\cdot,\cdot,\cdot}_N),\\
(V;\hat\rho,\hat\mu)\Big)$ to $\Big((\g,[\cdot,\cdot],\Courant{\cdot,\cdot,\cdot}),(V;\rho,\mu)\Big)$.
\end{pro}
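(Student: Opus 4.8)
The plan is to derive both assertions from Proposition \ref{pro:Nij}, which already establishes that $N+S$ is a Nijenhuis operator on the semidirect product \LYA ~$\g\ltimes V$. A Nijenhuis operator $\mathcal N$ on a \LYA ~produces a deformed \LYA ~structure for which $\mathcal N$ is itself a homomorphism onto the original algebra (see \eqref{trv1} and the preceding discussion). Applying this to $\mathcal N=N+S$ on $\g\ltimes V$ yields a \LYA ~structure on $\g\oplus V$, so that the whole proposition reduces to the single claim that this deformed algebra coincides with the semidirect product \LYA ~of the pair $\Big((\g,[\cdot,\cdot]_N,\Courant{\cdot,\cdot,\cdot}_N),(V;\hat\rho,\hat\mu)\Big)$.

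First I would compute the deformed binary operation. Writing $(N+S)(x+u)=Nx+Su$ and expanding
$$[x+u,y+v]_{\ltimes,N+S}=[(N+S)(x+u),y+v]_\ltimes+[x+u,(N+S)(y+v)]_\ltimes-(N+S)[x+u,y+v]_\ltimes,$$
the $\g$-component collapses to $[x,y]_N$ because $N$ is a Nijenhuis operator on $\g$, while collecting the $v$- and $u$-terms of the $V$-component reproduces exactly $\hat\rho(x)v-\hat\rho(y)u$ by the definition \eqref{hatrho}. Next I would treat the deformed trilinear operation the same way: its $\g$-component is $\Courant{x,y,z}_N$, and separating the $w$-, $u$-, and $v$-contributions of the $V$-component yields $\hat D(x,y)w+\hat\mu(y,z)u-\hat\mu(x,z)v$, where the $w$-coefficient matches the formula for $\hat D$ supplied by Lemma \ref{prolem} and the $u$-, $v$-coefficients match the definition \eqref{hatmu} of $\hat\mu$. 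Most of these component expansions have already been carried out in the proof of Proposition \ref{pro:Nij}, so only the bookkeeping of matching terms remains. The conclusion is that the $(N+S)$-deformed \LYA ~on $\g\oplus V$ is precisely the semidirect product \LYA ~$(\g,[\cdot,\cdot]_N,\Courant{\cdot,\cdot,\cdot}_N)\ltimes V$ determined by $(\hat\rho,\hat\mu)$.

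With this identification in hand, the characterization of representations by semidirect products (\cite{Zhang1}) immediately gives that $(V;\hat\rho,\hat\mu)$ is a representation of the deformed \LYA ~$(\g,[\cdot,\cdot]_N,\Courant{\cdot,\cdot,\cdot}_N)$, proving the first assertion. For the second, recall from Proposition \ref{pro:Nij} that $N+S$ is a \LYA ~homomorphism from the deformed algebra to $\g\ltimes V$; since $N+S$ preserves the decomposition $\g\oplus V$, acting as $N$ on $\g$ and $S$ on $V$, restricting its homomorphism identity to pure $\g$-inputs shows that $N$ is a homomorphism from $(\g,[\cdot,\cdot]_N,\Courant{\cdot,\cdot,\cdot}_N)$ to $(\g,\br,,\ltp)$, while applying it to mixed $\g$-$V$ inputs yields precisely the intertwining conditions \eqref{homo1} and \eqref{homo2} for the pair $(N,S)$. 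By Definition \ref{defi:homo}, $(N,S)$ is therefore a homomorphism of \LYP pairs, as claimed. The main obstacle is the trilinear component computation, but since Proposition \ref{pro:Nij} already expands these brackets, the remaining work is the routine identification of coefficients against \eqref{hatrho}, \eqref{hatmu}, and Lemma \ref{prolem}.
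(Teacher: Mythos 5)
Your proposal is correct and follows essentially the same route as the paper's proof: both start from Proposition \ref{pro:Nij}, expand the $(N+S)$-deformed brackets on $\g\oplus V$, identify the components with $[\cdot,\cdot]_N$, $\Courant{\cdot,\cdot,\cdot}_N$, $\hat\rho$, $\hat\mu$, and $\hat D$ (via Lemma \ref{prolem}), and read off that the result is the semidirect product of the deformed algebra with $(V;\hat\rho,\hat\mu)$, so the representation claim follows from the semidirect-product characterization. Your extra paragraph making the homomorphism assertion explicit (by restricting the homomorphism identity of $N+S$ to pure and mixed inputs) is a detail the paper leaves implicit, but it is consistent with, not a departure from, the paper's argument.
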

\begin{proof}
It was proved in \cite{ShengZhao} that $(\g,\br__N,\ltp_N)$ is a \LYA. What is left is to prove that $(V;\hat\rho,\hat\mu)$ is a representation on $(\g,\br__N,\ltp_N)$. Indeed, by Proposition \ref{pro:Nij}, we have
\begin{eqnarray*}
~ [x+u,y+v]_{N+S}&=&[(N+S)(x+u),y+v]_\ltimes+[x+u,(N+S)(y+v)]_\ltimes-(N+S)[x+u,y+v]_\ltimes\\
~&=&[Nx,y]+[x,Ny]-N[x,y]\\
~ &&+\Big(\rho(Nx)v+\rho(x)(Sv)-S(\rho(x)v)\Big)-\Big(\rho(Ny)u+\rho(y)(Su)-S(\rho(y)u)\Big)\\
~ &=&[x,y]_N+\hat\rho(x)v-\hat\rho(y)u,
\end{eqnarray*}
and by Lemma \ref{prolem}, we also have that
\begin{eqnarray*}
~ &&\Courant{x+u,y+v,z+w}_{N+S}\\
~ &=&\Courant{(N+S)(x+u),(N+S)(y+v),z+w}_\ltimes+\Courant{x+u,(N+S)(y+v),(N+S)(z+w)}_\ltimes\\
~ &&~~+\Courant{(N+S)(x+u),y+v,(N+S)(z+w)}_\ltimes\\
~ &&~~-(N+S)\Big(\Courant{(N+S)(x+u),y+v,z+w}_\ltimes+\Courant{x+u,(N+S)(y+v),z+w}_\ltimes\\
~ &&~~+\Courant{x+u,y+v,(N+S)(z+w)}_\ltimes\Big)+(N+S)^2\Courant{x+u,y+v,z+w}_\ltimes\\
~ &=&\Big(\Courant{Nx,Ny,z}+\Courant{Nx,y,Nz}+\Courant{x,Ny,Nz}\\
~ &&~~-N\big(\Courant{Nx,y,z}+\Courant{x,Ny,z}+\Courant{x,y,Nz}\big)+N^2\Courant{x,y,z}\Big)\\
~ &&+\Big(D(Nx,Ny)w+D(Nx,y)(Sw)+D(x,Ny)(Sw)\\
~ &&~~-S\big(D(Nx,y)w+D(x,Ny)w+D(x,y)(Sw)\big)+S^2(D(x,y)w)\Big)\\
~ &&~~+\Big(\mu(Ny,z)(Su)+\mu(y,Nz)(Su)+\mu(Ny,Nz)u\\
~ &&~~-S\big(\mu(y,z)(Su)+\mu(Ny,z)u+\mu(y,Nz)u\big)+S^2(\mu(y,z)u)\Big)\\
~ &&~~-\Big(\mu(Nx,z)(Sv)+\mu(Nx,Nz)v+\mu(x,Nz)(Sv)\\
~ &&~~+S\big(\mu(Nx,z)v+\mu(x,z)(Sv)+\mu(x,Nz)v\big)-S^3(\mu(x,z)v)\Big)\\
~ &=&\Courant{x,y,z}_N+\hat{D}(x,y)w+\hat\mu(y,z)u-\hat\mu(x,z)v,
\end{eqnarray*}
which implies that $(V;\hat\rho,\hat\mu)$ is a representation on $(\g,\br__N,\ltp_N)$.
\end{proof}

\medskip
\section{Relative Rota-Baxter-Nijenhuis structures and compatible relative Rota-Baxter operators}
\subsection{Relative Rota-Baxter-Nijenhuis structures on \LYP pairs}
In this subsection, we introduce the notion of relative Rota-Baxter-Nijenhuis structures and discuss several properties of relative Rota-Baxter-Nijenhuis structures. First, let us recall some notions in \cite{SZ1} about relative Rota-Baxter operators on \LYA s.
\begin{defi}{\rm (\cite{SZ1})}
Let $(\g,[\cdot,\cdot],\Courant{\cdot,\cdot,\cdot})$ be a Lie-Yamaguti algebra with a representation $(V;\rho,\mu)$ and $T:V\to \g$ a linear map. If $T$ satisfies
\begin{eqnarray*}
[Tu,Tv]&=&T\Big(\rho(Tu)v-\rho(Tv)u\Big),\\
\Courant{Tu,Tv,Tw}&=&T\Big(D(Tu,Tv)w+\mu(Tv,Tw)u-\mu(Tu,Tw)v\Big), \quad\forall u,v,w \in V,
\end{eqnarray*}
then we call $T$ a {\bf relative Rota-Baxter operator} on $\g$ with respect to the representation $(V;\rho,\mu)$.
\end{defi}

\begin{lem}{\rm (\cite{SZ1})}\label{lem:O}
Let $T$ be a relative Rota-Baxter operator on a Lie-Yamaguti algebra $(\g,[\cdot,\cdot],\Courant{\cdot,\cdot,\cdot})$ with respect to $(V;\rho,\mu)$.
Then $(V,[\cdot,\cdot]^T,\Courant{\cdot,\cdot,\cdot}^T)$ is a Lie-Yamaguti algebra, where
\begin{eqnarray}
\label{induce1}[u,v]^T&=&\rho(Tu)v-\rho(Tv)u,\\
\label{induce2}\Courant{u,v,w}^T&=&D_{\rho,\mu}(Tu,Tv)w+\mu(Tv,Tw)u-\mu(Tu,Tw)v,\quad \forall u,v,w\in V.
\end{eqnarray}
Thus $T$ is a Lie-Yamaguti algebra homomorphism from $(V,[\cdot,\cdot]^T,\Courant{\cdot,\cdot,\cdot}^T)$ to $(\g,[\cdot,\cdot],\Courant{\cdot,\cdot,\cdot})$.
Furthermore, $(V,[\cdot,\cdot]^T,\Courant{\cdot,\cdot,\cdot}^T)$ is the {\bf sub-adjacent Lie-Yamaguti algebra} of the pre-Lie-Yamaguti algebra $(V,*^T,\{\cdot,\cdot,\cdot\}^T)$ defined by
\begin{eqnarray}
u*^Tv=\rho(Tu)v,\quad \{u,v,w\}^T=\mu(Tv,Tw)u,\quad \forall u,v, w \in V.\label{preO}
\end{eqnarray}
\end{lem}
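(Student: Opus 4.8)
The plan is to sidestep a direct verification of the four Lie-Yamaguti axioms for $([\cdot,\cdot]^T,\Courant{\cdot,\cdot,\cdot}^T)$ and instead realize $(V,[\cdot,\cdot]^T,\Courant{\cdot,\cdot,\cdot}^T)$ as an isomorphic copy of a subalgebra of the semidirect product Lie-Yamaguti algebra $\g\ltimes V$. Recall that $\g\ltimes V$ carries the operations $[x+u,y+v]_\ltimes=[x,y]+\rho(x)v-\rho(y)u$ and $\Courant{x+u,y+v,z+w}_\ltimes=\Courant{x,y,z}+D(x,y)w+\mu(y,z)u-\mu(x,z)v$. Consider the graph $G_T=\{Tu+u\mid u\in V\}\subseteq\g\ltimes V$ together with the linear map $\iota\colon V\to\g\ltimes V$, $u\mapsto Tu+u$. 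Note $\iota$ is automatically injective (its $V$-component is the identity), independently of whether $T$ is injective, with image $G_T$.

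First I would show $G_T$ is closed under both operations. Computing directly, $[Tu+u,Tv+v]_\ltimes=[Tu,Tv]+\rho(Tu)v-\rho(Tv)u$, and the first defining identity of a relative Rota-Baxter operator gives $[Tu,Tv]=T(\rho(Tu)v-\rho(Tv)u)=T([u,v]^T)$, so $[Tu+u,Tv+v]_\ltimes=T([u,v]^T)+[u,v]^T\in G_T$. Likewise $\Courant{Tu+u,Tv+v,Tw+w}_\ltimes=\Courant{Tu,Tv,Tw}+D(Tu,Tv)w+\mu(Tv,Tw)u-\mu(Tu,Tw)v$, and the second defining identity yields $\Courant{Tu,Tv,Tw}=T(\Courant{u,v,w}^T)$, whence $\Courant{Tu+u,Tv+v,Tw+w}_\ltimes=T(\Courant{u,v,w}^T)+\Courant{u,v,w}^T\in G_T$. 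Thus $G_T$ is a Lie-Yamaguti subalgebra, and these two displays say exactly that $\iota$ intertwines $([\cdot,\cdot]^T,\Courant{\cdot,\cdot,\cdot}^T)$ with $([\cdot,\cdot]_\ltimes,\Courant{\cdot,\cdot,\cdot}_\ltimes)$. Since $\iota$ is a linear isomorphism onto the subalgebra $G_T$ and the Lie-Yamaguti axioms hold throughout $\g\ltimes V$, transporting them back along $\iota^{-1}$ shows $(V,[\cdot,\cdot]^T,\Courant{\cdot,\cdot,\cdot}^T)$ is a Lie-Yamaguti algebra. Moreover $T=\pr_\g\circ\iota$, and since $\pr_\g\colon\g\ltimes V\to\g$ and $\iota$ both preserve the operations, so does $T$; equivalently, $T([u,v]^T)=[Tu,Tv]$ and $T(\Courant{u,v,w}^T)=\Courant{Tu,Tv,Tw}$ are literally the defining equations of $T$, so $T$ is a homomorphism.

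For the last assertion I would argue at the level of formulas. Directly $[u,v]^T=\rho(Tu)v-\rho(Tv)u=u*^Tv-v*^Tu$ is the commutator of $*^T$. For the ternary operation, I would expand $D(Tu,Tv)w$ via its definition \eqref{rep}, rewriting $\mu(Tv,Tu)w,\mu(Tu,Tv)w$ as $\{w,v,u\}^T,\{w,u,v\}^T$, the term $[\rho(Tu),\rho(Tv)]w$ as $u*^T(v*^Tw)-v*^T(u*^Tw)$, and $\rho([Tu,Tv])w$ as $(u*^Tv)*^Tw-(v*^Tu)*^Tw$ using $[Tu,Tv]=T([u,v]^T)$; combined with $\mu(Tv,Tw)u-\mu(Tu,Tw)v=\{u,v,w\}^T-\{v,u,w\}^T$, this exhibits $\Courant{u,v,w}^T$ as the standard sub-adjacent ternary operation attached to $(V,*^T,\{\cdot,\cdot,\cdot\}^T)$. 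It then remains to check that $(*^T,\{\cdot,\cdot,\cdot\}^T)$ satisfy the pre-Lie-Yamaguti axioms, which I would do by substituting $u*^Tv=\rho(Tu)v$ and $\{u,v,w\}^T=\mu(Tv,Tw)u$ into those axioms and invoking the representation identities of Definition \ref{defi:representation} together with the relative Rota-Baxter relations.

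I expect the main obstacle to be precisely this verification of the pre-Lie-Yamaguti axioms: it is the counterpart of the fundamental identity and requires carefully matching the higher multilinear representation conditions of Definition \ref{defi:representation} after absorbing $T$ into every argument. By contrast, once the semidirect product is invoked, the Lie-Yamaguti axioms for $([\cdot,\cdot]^T,\Courant{\cdot,\cdot,\cdot}^T)$ and the homomorphism property for $T$ become essentially automatic and need no further identities.
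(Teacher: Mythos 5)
The paper offers no proof of this lemma at all --- it is imported verbatim from \cite{SZ1} --- so your proposal can only be judged on its own merits and against the standard argument. For the first two assertions your graph argument is correct and complete: closure of $G_T=\{Tu+u\mid u\in V\}$ under $[\cdot,\cdot]_\ltimes$ and $\Courant{\cdot,\cdot,\cdot}_\ltimes$ is exactly equivalent to the two defining identities of a relative Rota-Baxter operator, $\iota(u)=Tu+u$ is a linear bijection onto $G_T$ intertwining the operations, and pulling the Lie-Yamaguti axioms of the subalgebra $G_T$ back along $\iota$ establishes them for $([\cdot,\cdot]^T,\Courant{\cdot,\cdot,\cdot}^T)$; the homomorphism property of $T$ is then nothing but the defining identities re-read. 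This graph-of-$T$ mechanism is the standard route (and the one underlying \cite{SZ1}), so these parts need no repair.

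The genuine gap is in the final assertion. That assertion makes two claims: (a) the operations $u*^Tv=\rho(Tu)v$ and $\{u,v,w\}^T=\mu(Tv,Tw)u$ make $V$ a pre-Lie-Yamaguti algebra, and (b) the sub-adjacent Lie-Yamaguti operations of that pre-Lie-Yamaguti algebra are precisely $([\cdot,\cdot]^T,\Courant{\cdot,\cdot,\cdot}^T)$. Your computation handles (b): the commutator identity is immediate, and your expansion of $D(Tu,Tv)w$ via \eqref{rep}, using $[Tu,Tv]=T([u,v]^T)$, is correct and produces the sub-adjacent ternary bracket. But (a) is exactly what you defer (``which I would do by substituting \dots and invoking the representation identities''), and (a) is the substantive content of this part of the lemma: it is a multi-identity verification in which each pre-Lie-Yamaguti axiom --- none of which is even stated in the present paper; they live only in \cite{SZ1} --- must be derived from the representation conditions of Definition \ref{defi:representation} evaluated on $T$-images, combined with the Rota-Baxter identities. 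Until that computation is carried out, the statement that $(V,*^T,\{\cdot,\cdot,\cdot\}^T)$ is a pre-Lie-Yamaguti algebra, hence the last sentence of the lemma, is unproven in your proposal. Note also that this piece cannot be recovered from your graph argument: the graph trick lives entirely at the level of the skew-symmetrized Lie-Yamaguti operations on $\g\ltimes V$ and is blind to the non-symmetrized pre-Lie-Yamaguti structure, so a separate direct verification is unavoidable here.
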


With the above notations, let $S\in \gl(V)$ be a linear map on $V$, then the deformed bracket $([\cdot,\cdot]_S^T,\Courant{\cdot,\cdot,\cdot}_S^T)$ of the sub-adjacent Lie-Yamaguti algebra $(V,[\cdot,\cdot]^T,\Courant{\cdot,\cdot,\cdot}^T)$ is given as follows:
\begin{eqnarray*}
[u,v]_S^T&=&[Su,v]^T+[u,Sv]^T-S[u,v]^T,\\
\Courant{u,v,w}_S^T&=&\Courant{Su,Sv,w}^T+\Courant{Su,v,Sw}^T+\Courant{u,Sv,Sw}^T\Big)\\
~ &&-S\Big(\Courant{Su,v,w}^T+\Courant{u,Sv,w}^T+\Courant{u,v,Sw}^T\Big)+S^2\Courant{u,v,w}^T, \quad \forall u,v,w \in V.
\end{eqnarray*}

Moreover, define a pair of brackets $([\cdot,\cdot]_{\hat\rho}^T,\Courant{\cdot,\cdot,\cdot}_{\hat\mu}^T)$ to be
\begin{eqnarray*}
[u,v]_{\hat\rho}^T&=&\hat\rho(Tu)v-\hat\rho(Tv)u,\\
\Courant{u,v,w}_{\hat\mu}^T&=&\hat{D}(Tu,Tv)w+\hat\mu(Tv,Tw)u-\hat\mu(Tu,Tw)v,
\end{eqnarray*}
respectively, where $\hat\rho,~ \hat\mu$ and $\hat D$ are given by \eqref{hatrho}-\eqref{hatD}. Note that in general $([\cdot,\cdot]_{S}^T,~\Courant{\cdot,\cdot,\cdot}_S^T)$, and $([\cdot,\cdot]_{\hat\rho}^T,~\Courant{\cdot,\cdot,\cdot}_{\hat\mu}^T)$ are not necessarily Lie-Yamaguti algebra structures.

\begin{defi}
Let $(N,S)$ be a Nijenhuis structure on a \LYP pair $\Big((\g,[\cdot,\cdot],\Courant{\cdot,\cdot,\cdot}),(V;\rho,\mu)\Big)$ and let $T:V\longrightarrow \g$ be a relative Rota-Baxter operator on $(\g,[\cdot,\cdot],\Courant{\cdot,\cdot,\cdot})$ with respect to  $(V;\rho,\mu)$.
If for all $u,v,w\in V$, the following conditions are satisfied:
\begin{eqnarray}
\label{ON1}N\circ T&=&T\circ S,\\
\label{ON2}~[u,v]_S^T&=&[u,v]^{N\circ T},\\
\label{ON3}\Courant{u,v,w}_S^T&=&\Courant{u,v,w}^{N\circ T}.
\end{eqnarray}
Then we call the triple $(T,S,N)$ a {\bf relative Rota-Baxter-Nijenhuis structure} on the \LYP pair $\Big((\g,[\cdot,\cdot],\Courant{\cdot,\cdot,\cdot}),(V;\rho,\mu)\Big)$. 
\end{defi}
\emptycomment{
It can be deduced from Eqs. \eqref{ON2} and \eqref{ON3} that
\begin{eqnarray*}
~ &&\rho(Tu)(Sv)-\rho(Tv)(Su)=S([u,v]^T),\\
~ &&D(TSu,Tv)(Sw)+D(Tu,TSv)(Sw)+\mu(TSv,Tw)(Su)+\mu(Tv,TSw)(Su)\\
~ &&-\mu(TSu,Tw)(Sv)-\mu(Tu,TSw)(Sv)+S^2(\Courant{u,v,w}^T)\\
~ &=&S\Big(D(TSu,Tv)w+D(Tu,TSv)w+D(Tu,Tv)(Sw)+\mu(Tv,Tw)(Su)+\mu(TSv,Tw)u\\
~ &&+\mu(Tv,TSw)u-\mu(TSu,Tw)v-\mu(Tu,Tw)(Sv)-\mu(Tu,TSw)v\Big).
\end{eqnarray*}

If moreover, a relative Rota-Baxter-Nijenhuis structure $(T,S,N)$ also satisfies the following equation

Then we call $(T,S,N)$ {\bf strong}.}

\begin{lem}
Let $(T,S,N)$ be a relative Rota-Baxter-Nijenhuis structure on a \LYP pair $\Big((\g,[\cdot,\cdot],\Courant{\cdot,\cdot,\cdot}),(V;\rho,\mu)\Big)$. Then we have
\begin{eqnarray*}
~[u,v]_S^T&=&[u,v]_{\hat\rho}^T,\\
\Courant{u,v,w}_S^T&=&\Courant{u,v,w}_{\hat\mu}^T.
\end{eqnarray*}
\end{lem}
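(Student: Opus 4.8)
The plan is to verify both identities by directly expanding the two sides from the definitions of the sub-adjacent brackets \eqref{induce1}, \eqref{induce2} together with the formulas \eqref{hatrho}, \eqref{hatmu}, \eqref{hatD} for $\hat\rho$, $\hat\mu$, $\hat D$, and then comparing term by term. The only structural input I expect to need is the intertwining condition \eqref{ON1}, which I would use in the pointwise form $N(Tu)=T(Su)$ for all $u\in V$; neither the representation axioms nor the remaining conditions \eqref{ON2}, \eqref{ON3} should enter.

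For the binary bracket I would first substitute $\hat\rho(Tu)v=\rho(NTu)v+\rho(Tu)(Sv)-S(\rho(Tu)v)$ into $[u,v]_{\hat\rho}^T=\hat\rho(Tu)v-\hat\rho(Tv)u$, replace $NTu$ by $TSu$ and $NTv$ by $TSv$ via \eqref{ON1}, and recognize the outcome as exactly $[Su,v]^T+[u,Sv]^T-S[u,v]^T=[u,v]_S^T$. This case is short and essentially immediate once \eqref{ON1} is applied.

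The trilinear identity is the substantive part. I would expand $\Courant{u,v,w}_S^T$ into its seven constituent terms $\Courant{Su,Sv,w}^T,\ \Courant{Su,v,Sw}^T,\ \Courant{u,Sv,Sw}^T,\ -S\Courant{Su,v,w}^T,\ -S\Courant{u,Sv,w}^T,\ -S\Courant{u,v,Sw}^T,\ S^2\Courant{u,v,w}^T$, writing each out through $\Courant{a,b,c}^T=D(Ta,Tb)c+\mu(Tb,Tc)a-\mu(Ta,Tc)b$ and again using \eqref{ON1} in the form $N(Ta)=T(Sa)$. I would then separate the resulting monomials into their $D$-contributions and their two kinds of $\mu$-contributions (those originating from $\mu(Tb,Tc)a$, acting on the $u$-slot, and those from $\mu(Ta,Tc)b$, acting on the $v$-slot). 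Matching the $D$-contributions against $\hat D(Tu,Tv)w$, the first $\mu$-contributions against $\hat\mu(Tv,Tw)u$, and the second against $-\hat\mu(Tu,Tw)v$ — each expanded from \eqref{hatmu}, \eqref{hatD} after the same replacement — yields the claim. For instance, the $D$-contributions collect to $D(TSu,TSv)w+D(TSu,Tv)(Sw)+D(Tu,TSv)(Sw)-S\big(D(TSu,Tv)w+D(Tu,TSv)w+D(Tu,Tv)(Sw)\big)+S^2\big(D(Tu,Tv)w\big)$, which is precisely $\hat D(Tu,Tv)w$.

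The main obstacle is purely the bookkeeping in this trilinear expansion: there are on the order of twenty signed $D$- and $\mu$-monomials, graded by powers of $S$ and by which arguments carry $N$, and the verification rests on every one of them landing in the correct group with the correct sign. I would control this by organizing the computation as the three-way split above and checking the coefficient of each monomial type (the $S^0$, $S^1$, and $S^2$ parts, with arguments drawn from $Tu,TSu,Tv,TSv,\dots$) separately, rather than manipulating the whole expression at once.
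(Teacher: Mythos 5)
Your proposal is correct and is exactly the paper's argument: the paper's proof reads ``It follows from \eqref{ON1} and a direct computation,'' and your expansion via \eqref{induce1}, \eqref{induce2}, \eqref{hatrho}--\eqref{hatD} with the single substitution $NTa = TSa$ is precisely that computation, including the correct observation that neither \eqref{ON2}, \eqref{ON3} nor the representation axioms are needed. Your term-by-term grouping (the $D$-block and the two $\mu$-blocks) checks out.
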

\begin{proof}
It follows from \eqref{ON1} and a direct computation.
\end{proof}

Thus if $(T,S,N)$ is a relative Rota-Baxter-Nijenhuis structure on a \LYP pair $\Big((\g,[\cdot,\cdot],\Courant{\cdot,\cdot,\cdot}),\\
(V;\rho,\mu)\Big)$, then the brackets $([\cdot,\cdot]_S^T,\Courant{\cdot,\cdot,\cdot}_S^T)$ and $([\cdot,\cdot]_{\hat\rho}^T,\Courant{\cdot,\cdot,\cdot}_{\hat\mu}^T)$ are equal. Moreover, we have the following conclusion.
\begin{thm}
Let $(T,S,N)$ be a relative Rota-Baxter-Nijenhuis structure on a \LYP pair $\Big((\g,[\cdot,\cdot],\Courant{\cdot,\cdot,\cdot}),(V;\rho,\mu)\Big)$. Then $S$ is a Nijenhuis operator on the sub-adjacent Lie-Yamaguti algebra $(V,[\cdot,\cdot]^T,\Courant{\cdot,\cdot,\cdot}^T)$. Thus the brackets $([\cdot,\cdot]_{S}^T,~\Courant{\cdot,\cdot,\cdot}_S^T)$ and $([\cdot,\cdot]_{\hat\rho}^T,~\Courant{\cdot,\cdot,\cdot}_{\hat\mu}^T)$ are the Lie-Yamaguti algebra structures on $V$.
\end{thm}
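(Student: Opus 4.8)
The plan is to verify directly that $S$ obeys the two defining identities of a Nijenhuis operator on the sub-adjacent Lie-Yamaguti algebra $(V,[\cdot,\cdot]^T,\Courant{\cdot,\cdot,\cdot}^T)$. By the characterization of a Nijenhuis operator on a \LYA recalled in Eqs.~\eqref{Nij1} and the subsequent remark, these identities read
\[
[Su,Sv]^T=S[u,v]_S^T,\qquad \Courant{Su,Sv,Sw}^T=S\Courant{u,v,w}_S^T,
\]
for all $u,v,w\in V$, where $[\cdot,\cdot]_S^T$ and $\Courant{\cdot,\cdot,\cdot}_S^T$ are the deformed brackets recalled just before the theorem. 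The inputs are the formulas \eqref{induce1}--\eqref{induce2} defining $[\cdot,\cdot]^T$ and $\Courant{\cdot,\cdot,\cdot}^T$, the compatibility $N\circ T=T\circ S$ of \eqref{ON1}, and the Nijenhuis structure relations for $\rho$, $\mu$ and $D$, namely \eqref{Nij4}, \eqref{Nijstru2}, \eqref{Nijstru3} together with Lemma~\ref{fundlem}.

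For the binary identity I would first use \eqref{ON1} to replace $T(Su)$ by $N(Tu)$ and $T(Sv)$ by $N(Tv)$, so that $[Su,Sv]^T=\rho(NTu)(Sv)-\rho(NTv)(Su)$. Applying the $\rho$-relation of the Nijenhuis structure \eqref{Nij4} to each of $\rho(NTu)(Sv)$ and $\rho(NTv)(Su)$ rewrites $[Su,Sv]^T$ as a combination of terms weighted by $S$ and by $S^2$. Expanding $S[u,v]_S^T=S\big([Su,v]^T+[u,Sv]^T-S[u,v]^T\big)$ via \eqref{induce1} and the same substitutions, one sees that the two sides agree term by term, whence $[Su,Sv]^T=S[u,v]_S^T$. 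The relation \eqref{ON2} then identifies this common deformed bracket with $[u,v]^{N\circ T}$, the bracket induced by the relative Rota-Baxter operator $N\circ T$.

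The ternary identity is the main obstacle, being a lengthier bookkeeping computation. Using \eqref{ON1} three times gives
\[
\Courant{Su,Sv,Sw}^T=D(NTu,NTv)(Sw)+\mu(NTv,NTw)(Su)-\mu(NTu,NTw)(Sv).
\]
I would then expand the $D$-term by \eqref{Nijstru3} (Lemma~\ref{fundlem}) and the two $\mu$-terms by \eqref{Nijstru2}; each expansion contributes three summands weighted by $S$, three weighted by $-S^2$, and one weighted by $S^3$. The crux is to check that, comparing with the expansions of the blocks of $\Courant{u,v,w}_S^T$ (again via \eqref{ON1} and \eqref{induce2}), the nine $S$-weighted summands regroup exactly into $\Courant{Su,Sv,w}^T+\Courant{Su,v,Sw}^T+\Courant{u,Sv,Sw}^T$, the nine $S^2$-weighted summands into $\Courant{Su,v,w}^T+\Courant{u,Sv,w}^T+\Courant{u,v,Sw}^T$, and the three $S^3$-weighted summands into $\Courant{u,v,w}^T$. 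Granting this regrouping, $\Courant{Su,Sv,Sw}^T$ is precisely $S$ applied to the defining combination of $\Courant{\cdot,\cdot,\cdot}_S^T$, i.e. $S\Courant{u,v,w}_S^T$, and \eqref{ON3} identifies this with $S\Courant{u,v,w}^{N\circ T}$.

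Finally, with $S$ established as a Nijenhuis operator on the Lie-Yamaguti algebra $(V,[\cdot,\cdot]^T,\Courant{\cdot,\cdot,\cdot}^T)$, the general principle that a Nijenhuis operator deforms its algebra into a new \LYA (see \cite{ShengZhao}) shows that $([\cdot,\cdot]_S^T,\Courant{\cdot,\cdot,\cdot}_S^T)$ is a Lie-Yamaguti algebra structure on $V$. The preceding lemma, which rests on \eqref{ON1}, identifies $([\cdot,\cdot]_S^T,\Courant{\cdot,\cdot,\cdot}_S^T)$ with $([\cdot,\cdot]_{\hat\rho}^T,\Courant{\cdot,\cdot,\cdot}_{\hat\mu}^T)$, so the latter is a Lie-Yamaguti algebra structure on $V$ as well, which completes the argument.
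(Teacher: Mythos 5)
Your proposal is correct and takes essentially the same route as the paper's own proof: both verify the two Nijenhuis identities $[Su,Sv]^T=S[u,v]_S^T$ and $\Courant{Su,Sv,Sw}^T=S\Courant{u,v,w}_S^T$ by expanding via \eqref{induce1}--\eqref{induce2}, substituting $T\circ S=N\circ T$ from \eqref{ON1}, applying the Nijenhuis-structure relations \eqref{Nijstru1}, \eqref{Nijstru2}, and \eqref{Nijstru3} (Lemma \ref{fundlem}), and regrouping the $S$-, $S^2$-, and $S^3$-weighted terms exactly as you describe, after which the identification of $([\cdot,\cdot]_S^T,\Courant{\cdot,\cdot,\cdot}_S^T)$ with $([\cdot,\cdot]_{\hat\rho}^T,\Courant{\cdot,\cdot,\cdot}_{\hat\mu}^T)$ follows from the preceding lemma just as in the paper.
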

\begin{proof}
Since $(T,S,N)$ is a relative Rota-Baxter-Nijenhuis structure, then we have
\begin{eqnarray*}
~ &&[Su,Sv]^T\stackrel{\eqref{induce1}}=\rho(TSu)(Sv)-\rho(TSv)(Su)\\
~ &\stackrel{\eqref{ON1},\eqref{Nijstru1}}=&S\Big(\rho(TSu)v+\rho(Tu)(Sv)\Big)-S^2\Big(\rho(Tu)v\Big)\\
~ &&-S\Big(\rho(TSv)u+\rho(Tv)(Su)\Big)+S^2\Big(\rho(Tv)u\Big)\\
~ &\stackrel{\eqref{induce1}}=&S\Big([Su,v]^T+[u,Sv]^T\Big)-S^2[u,v]^T,
\end{eqnarray*}
and
\begin{eqnarray*}
~ &&\Courant{Su,Sv,Sw}^T\\
&\stackrel{\eqref{induce2}}=&D(TSu,TSv)(Sw)+\mu(TSv,TSw)(Su)-\mu(TSu,TSw)(Sv)\\
~ &\stackrel{\eqref{ON1},\eqref{Nijstru2},\eqref{Nijstru3}}=&S\Big(D(TSu,Tv)(Sw)+D(Tu,TSv)(Sw)+D(TSu,TSv)w\Big)\\
~ &&-S^2\Big(D(TSu,Tv)w+D(Tu,TSv)w+D(Tu,Tv)(Sw)\Big)+S^3(D(Tu,Tv)w)\\
~ &&+S\Big(\mu(TSv,Tw)(Su)+\mu(Tv,TSw)(Su)+\mu(TSv,TSw)u\Big)\\
~ &&-S^2\Big(\mu(TSv,Tw)u+\mu(Tv,TSw)u+\mu(Tv,Tw)(Su)\Big)+S^3(\mu(Tv,Tw)u)\\
 &&-S\Big(\mu(TSu,Tw)(Sv)+\mu(Tu,TSw)(Sv)+\mu(TSu,TSw)v\Big)\\
~ &&+S^2\Big(\mu(TSu,Tw)v+\mu(Tu,TSw)v+\mu(Tu,Tw)(Sv)\Big)-S^3(\mu(Tu,Tw)v)\\
~ &\stackrel{\eqref{induce2}}=&S\Big(\Courant{Su,Sv,w}^T+\Courant{Su,v,Sw}^T+\Courant{u,Sv,Sw}^T\Big)\\
~ &&-S^2\Big(\Courant{Su,v,w}^T+\Courant{u,Sv,w}^T+\Courant{Su,v,Sw}^T\Big)+S^3\Courant{u,v,w}^T.
\end{eqnarray*}
Thus, $S$ is a Nijenhuis operator on the Lie-Yamaguti algebra $(V,[\cdot,\cdot]^T,\Courant{\cdot,\cdot,\cdot}^T)$.
\end{proof}

\begin{thm}\label{Nij}
Let $(T,S,N)$ be a relative Rota-Baxter-Nijenhuis structure on a \LYP pair $\Big((\g,[\cdot,\cdot],\Courant{\cdot,\cdot,\cdot}),(V;\rho,\mu)\Big)$. Then we have
\begin{itemize}
\item [\rm (i)] $T$ is a relative Rota-Baxter operator on the deformed Lie-Yamaguti algebra $(\g,[\cdot,\cdot]_N,\Courant{\cdot,\cdot,\cdot}_N)$ with respect to a representation $(V;\hat\rho,\hat\mu)$;
\item [\rm (ii)] $N\circ T$ is a relative Rota-Baxter operator on the Lie-Yamaguti algebra $(\g,[\cdot,\cdot],\Courant{\cdot,\cdot,\cdot})$ with respect to a representation $(V;\rho,\mu)$.
\end{itemize}
\end{thm}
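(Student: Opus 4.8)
The plan is to prove both assertions by \emph{factoring} each relative Rota-Baxter identity on $\g$ through the sub-adjacent Lie-Yamaguti algebra $(V,[\cdot,\cdot]^T,\Courant{\cdot,\cdot,\cdot}^T)$ of $T$, and then invoking the defining conditions \eqref{ON1}--\eqref{ON3}. The two identities that carry the whole argument are
\[
[Tu,Tv]_N = T\big([u,v]_S^T\big),\qquad \Courant{Tu,Tv,Tw}_N = T\big(\Courant{u,v,w}_S^T\big).
\]

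For the first identity I would expand $[Tu,Tv]_N=[NTu,Tv]+[Tu,NTv]-N[Tu,Tv]$, use \eqref{ON1} to rewrite each $NT$ as $TS$, and then apply the relative Rota-Baxter property of $T$ to $[TSu,Tv]$, $[Tu,TSv]$ and $[Tu,Tv]$. Since $T$ is a relative Rota-Baxter operator these become $T([Su,v]^T)$, $T([u,Sv]^T)$ and $T([u,v]^T)$ respectively, and $N[Tu,Tv]=NT([u,v]^T)=T(S[u,v]^T)$ again by \eqref{ON1}; summing reproduces $T([Su,v]^T+[u,Sv]^T-S[u,v]^T)=T([u,v]_S^T)$. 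The ternary identity is proved in the same spirit: I would expand $\Courant{Tu,Tv,Tw}_N$ using the formula for the deformed ternary bracket $\Courant{\cdot,\cdot,\cdot}_N$ in \eqref{Nij1}, convert through $NT=TS$ (and its iterate $N^2T=TS^2$, obtained by applying \eqref{ON1} twice), apply the relative Rota-Baxter property of $T$ to each of the seven resulting terms, and check that they reassemble into $T(\Courant{u,v,w}_S^T)$ with the correct signs. With these factorizations in hand, statement (i) follows at once: rewriting the right-hand sides by the preceding lemma, $[u,v]_S^T=[u,v]_{\hat\rho}^T=\hat\rho(Tu)v-\hat\rho(Tv)u$ and $\Courant{u,v,w}_S^T=\Courant{u,v,w}_{\hat\mu}^T=\hat{D}(Tu,Tv)w+\hat\mu(Tv,Tw)u-\hat\mu(Tu,Tw)v$, so $T$ satisfies exactly the two relative Rota-Baxter identities on $(\g,[\cdot,\cdot]_N,\Courant{\cdot,\cdot,\cdot}_N)$ with respect to $(V;\hat\rho,\hat\mu)$, the latter being a genuine representation by the proposition preceding this theorem.

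Statement (ii) I would then derive from (i) by applying the map $N$, which by the Remark following the definition of a Nijenhuis structure is a Lie-Yamaguti algebra homomorphism from $(\g,[\cdot,\cdot]_N,\Courant{\cdot,\cdot,\cdot}_N)$ to $(\g,[\cdot,\cdot],\Courant{\cdot,\cdot,\cdot})$. Thus $[NTu,NTv]=N[Tu,Tv]_N=NT([u,v]_S^T)$ and $\Courant{NTu,NTv,NTw}=N\Courant{Tu,Tv,Tw}_N=NT(\Courant{u,v,w}_S^T)$, using the factorizations from (i). Finally, \eqref{ON2} and \eqref{induce1} give $[u,v]_S^T=[u,v]^{N\circ T}=\rho(NTu)v-\rho(NTv)u$, while \eqref{ON3} and \eqref{induce2} give $\Courant{u,v,w}_S^T=\Courant{u,v,w}^{N\circ T}=D(NTu,NTv)w+\mu(NTv,NTw)u-\mu(NTu,NTw)v$; substituting shows that $N\circ T$ obeys the relative Rota-Baxter identities with respect to $(V;\rho,\mu)$.

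The only real obstacle is the bookkeeping in the ternary factorization in (i): one must expand $\Courant{Tu,Tv,Tw}_N$ into its seven summands, track the conversions $NT=TS$ and $N^2T=TS^2$ through each, and match the outcome term-by-term against the definition of $\Courant{u,v,w}_S^T$ without sign or index slips. Everything else is conceptual and short -- the relative Rota-Baxter property turns $T$-images of brackets into $T$ of sub-adjacent brackets, \eqref{ON1} intertwines $N$ and $S$ across $T$, and $N$ transports the deformed structure to the original one as a homomorphism.
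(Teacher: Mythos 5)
Your proposal is correct and follows essentially the same route as the paper's own proof: both establish the key factorizations $[Tu,Tv]_N=T\big([u,v]_S^T\big)$ and $\Courant{Tu,Tv,Tw}_N=T\big(\Courant{u,v,w}_S^T\big)$ using $N\circ T=T\circ S$ together with the relative Rota-Baxter property of $T$, then identify $[\cdot,\cdot]_S^T$ with $[\cdot,\cdot]_{\hat\rho}^T$ (and likewise for the ternary brackets) to get (i), and combine \eqref{ON2}--\eqref{ON3} with the homomorphism identities $N[x,y]_N=[Nx,Ny]$, $N\Courant{x,y,z}_N=\Courant{Nx,Ny,Nz}$ to get (ii). The only difference is cosmetic: your chains of equalities run in the opposite direction from the paper's.
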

\begin{proof}
Since $T$ is a relative Rota-Baxter operator on the Lie-Yamaguti algebra $(\g,[\cdot,\cdot],\Courant{\cdot,\cdot,\cdot})$ with respect to a representation $(V;\rho,\mu)$ and $N\circ T=T\circ S$, we have
\begin{eqnarray*}
~ T[u,v]_{\hat\rho}^T&=&T[u,v]_S^T=T\Big([Su,v]^T+[u,Sv]^T-S[u,v]^T\Big)\\
~ &=&[TSu,Tv]+[Tu,TSv]-T\circ S[u,v]^T\\
~ &=&[NTu,Tv]+[Tu,NTv]-N\circ T[u,v]^T\\\
~ &=&[NTu,Tv]+[Tu,NTv]-N[Tu,Tv]\\
~ &=&[Tu,Tv]_N,
\end{eqnarray*}
and
\begin{eqnarray*}
~&& T\Courant{u,v,w}_{\hat\mu}^T=T\Courant{u,v,w}_S^T\\
~ &=&T\Big(\Courant{Su,Sv,w}^T+\Courant{Su,v,Sw}^T+\Courant{u,Sv,Sw}^T\\
~ &&-S\Courant{Su,v,w}^T-S\Courant{u,Sv,w}^T-S\Courant{u,v,Sw}^T+S^2\Courant{u,v,w}^T\Big)\\
~ &=&\Courant{TSu,TSv,w}+\Courant{TSu,Tv,TSw}+\Courant{Tu,TSv,TSw}\\
~ &&-T\circ S\Courant{Su,TSv,w}^T-T\circ S\Courant{Su,Sv,w}-T\circ S\Courant{u,v,Sw}^T+T\circ S^2\Courant{u,v,w}^T\\
~ &=&\Courant{NTu,NTv,w}+\Courant{NTu,Tv,NTw}+\Courant{Tu,NTv,NTw}\\
~ &&-N\Big(\Courant{NTu,Tv,Tw}^T+\Courant{NTu,Tv,NTw}+\Courant{Tu,Tv,NTw}^T\Big)+N^2\Courant{Tu,Tv,Tw}\\
~ &=&\Courant{Tu,Tv,Tw}_N.
\end{eqnarray*}
This proves (i).
By \eqref{ON2} and \eqref{ON3}, we have that
$$N\circ T[u,v]^{N\circ T}=N\circ T[u,v]_S^T=N[Tu,Tv]_N=[NTu,NTv],$$
and
$$N\circ T\Courant{u,v,w}^{N\circ T}=N\circ T\Courant{u,v,w}_S^T=N\Courant{Tu,Tv,Tw}_N=\Courant{NTu,NTv,NT.w}$$
This proves (ii).
\end{proof}
\emptycomment{
Let $(T,S,N)$ be a relative Rota-Baxter-Nijenhuis structure on a \LYP pair $\Big((\g,\br,\ltp),(V;\rho,\mu)\Big)$. We give the following commutative diagram to summarize the conclusions in the former section, each of which arrows is a \LYA~ homomorphism:

\[
 \xymatrix{
 (V,[\cdot,\cdot]_S^T,\Courant{\cdot,\cdot,\cdot}_S^T)\ar[d]^{S}\ar[dr]^{N \circ T} \ar[r]^{T} & (\g,[\cdot,\cdot]_N,\Courant{\cdot,\cdot,\cdot}_N)\ar[d]^{N}    \\
(V,[\cdot,\cdot]^T,\Courant{\cdot,\cdot,\cdot}^T) \ar[r]^{T} &(\g,\br,\ltp)
}
\]
}

\subsection{Compatible relative Rota-Baxter operators on \LYA s}
In this subsection, we introduce the notion of compatible relative Rota-Baxter operators on \LYA, and we show that a relative Rota-Baxter-Nijenhuis structure gives rise to a pair of relative Rota-Baxter operators on \LYA s under a certain condition.

\begin{defi}\label{compO}
Let $T_1,~T_2: V \to \g$ be two relative Rota-Baxter operators on a \LYA~ $(\g,\br,\ltp)$ with respect to a representation $(V;\rho,\mu)$. If for all $k_1$ and $k_2$, $k_1T_1+k_2T_2$ is still a relative Rota-Baxter operator on $\g$, then we say that $T_1$ and $T_2$ are {\bf compatible}.
\end{defi}

By a direct computation, we have the following proposition.
\begin{pro}\label{compatible}
Let $T_1,~T_2:V \to \g$ be two relative Rota-Baxter operators on a \LYA~ $(\g,\br,\ltp)$. Then $T_1$ and $T_2$ are compatible if and only if the following equations are satisfied:
\begin{eqnarray*}
~ &&[T_iu,T_jv]+[T_ju,T_iv]\\
~ &=&T_i\Big(\rho(T_ju)v-\rho(T_jv)u\Big)+T_j\Big(\rho(T_iu)v-\rho(T_iv)u\Big),\\
~ &&\Courant{T_iu,T_iv,T_jw}+\Courant{T_iu,T_jv,T_iw}+\Courant{T_ju,T_iv,T_iw}\\
~ &=&T_i\Big(D(T_iu,T_jv)w+\mu(T_iv,T_jw)u-\mu(T_iu,T_jw)v\\
~ &&+D(T_ju,T_iv)w+\mu(T_jv,T_iw)u-\mu(T_ju,T_iw)v\Big)\\
~ &&+T_j\Big(D(T_iu,T_iv)w+\mu(T_iv,T_iw)u-\mu(T_iu,T_iw)v\Big), \quad(i,j=1,2).
\end{eqnarray*}
\end{pro}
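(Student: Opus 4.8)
The plan is to read off the conclusion directly from Definition \ref{compO}: the operators $T_1$ and $T_2$ are compatible exactly when the linear map $T := k_1 T_1 + k_2 T_2$ satisfies both defining identities of a relative Rota-Baxter operator on $(\g,[\cdot,\cdot],\Courant{\cdot,\cdot,\cdot})$ with respect to $(V;\rho,\mu)$ for \emph{all} scalars $k_1,k_2\in\mathbb{K}$. So I would substitute $Tu = k_1 T_1 u + k_2 T_2 u$ into each of these two identities, expand everything by multilinearity, and organize both sides as polynomials in $k_1,k_2$ with coefficients in $\g$ (for fixed $u,v,w\in V$). Since $\mathbb{K}$ has characteristic zero and is therefore infinite, such a polynomial identity holds for all $k_1,k_2$ if and only if each of its coefficients vanishes; matching coefficients will produce precisely the two displayed equations, and this equivalence supplies both directions of the proposition at once.

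For the binary identity $[Tu,Tv] = T\bigl(\rho(Tu)v - \rho(Tv)u\bigr)$, expanding the left-hand side by bilinearity of $[\cdot,\cdot]$ and the right-hand side by linearity of $\rho$ and of $T$ yields two homogeneous quadratics in $(k_1,k_2)$. The coefficients of $k_1^2$ agree because $T_1$ is a relative Rota-Baxter operator, and those of $k_2^2$ agree because $T_2$ is; hence the identity collapses to the vanishing of the $k_1k_2$-coefficient, which is exactly the first displayed equation (the instance $\{i,j\}=\{1,2\}$, the two orderings giving the same relation by symmetry).

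For the ternary identity $\Courant{Tu,Tv,Tw} = T\bigl(D(Tu,Tv)w + \mu(Tv,Tw)u - \mu(Tu,Tw)v\bigr)$ I would proceed in the same fashion, first noting that $D = D_{\rho,\mu}$ is bilinear: this is immediate from its defining formula \eqref{rep}, since $\rho$, $\mu$, and $[\cdot,\cdot]$ are all (bi)linear. After substituting $T = k_1 T_1 + k_2 T_2$ and using trilinearity of $\Courant{\cdot,\cdot,\cdot}$, both sides become homogeneous cubics in $(k_1,k_2)$. The $k_1^3$- and $k_2^3$-coefficients cancel by the relative Rota-Baxter property of $T_1$ and $T_2$ respectively, while the coefficient of $k_1^2 k_2$ gives the instance $(i,j)=(1,2)$ and that of $k_1 k_2^2$ the instance $(i,j)=(2,1)$ of the second displayed equation, the two being exchanged by the symmetry $T_1 \leftrightarrow T_2$. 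The only genuine obstacle is the bookkeeping in this ternary expansion, where the trilinear bracket unfolds into several terms that must be grouped correctly by their $(k_1,k_2)$-degree and, within the mixed degrees, sorted into the three positions $\Courant{T_iu,T_iv,T_jw}$, $\Courant{T_iu,T_jv,T_iw}$, $\Courant{T_ju,T_iv,T_iw}$ on the left and the corresponding $D$- and $\mu$-terms on the right; once the bilinearity of $D$ is in hand, no conceptual difficulty remains. Assembling the quadratic and cubic coefficient conditions then establishes the stated equivalence.
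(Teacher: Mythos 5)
Your proposal is correct and is essentially the paper's own argument: the paper dismisses this proposition with ``by a direct computation,'' and the intended computation is precisely what you carry out --- substitute $k_1T_1+k_2T_2$ into the two defining identities, expand by multilinearity (using that $D=D_{\rho,\mu}$ is bilinear), and compare coefficients of the monomials in $k_1,k_2$, the pure powers vanishing because $T_1$ and $T_2$ are themselves relative Rota-Baxter operators and the mixed coefficients yielding exactly the displayed equations. Your appeal to the infinitude of $\mathbb{K}$ (characteristic $0$) to pass from ``holds for all $k_1,k_2$'' to ``all coefficients vanish'' is the one point the paper leaves implicit, and it is handled correctly.
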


The following proposition shows that a pair of compatible relative Rota-Baxter operators with an invertible condition yields a Nijenhuis operator.
\begin{pro}
Let $T_1,~T_2:V \to \g$ be two compatible relative Rota-Baxter operators on a \LYA~ $(\g,\br,,\ltp)$ with respect to a representation $(V;\rho,\mu)$. If $T_2$ is invertible, then $N=T_1 \circ T_2^{-1}$ is a Nijenhuis operator on the \LYA ~$(\g,\br,,\ltp)$.
\end{pro}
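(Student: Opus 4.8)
The plan is to check that $N=T_1\circ T_2^{-1}$ satisfies the two defining identities of a Nijenhuis operator on a Lie-Yamaguti algebra, recorded in \eqref{Nij1} (see also \cite{ShengZhao}):
\begin{eqnarray*}
[Nx,Ny]&=&N\big([Nx,y]+[x,Ny]-N[x,y]\big),\\
\Courant{Nx,Ny,Nz}&=&N\big(\Courant{Nx,Ny,z}+\Courant{x,Ny,Nz}+\Courant{Nx,y,Nz}\big)\\
~ &&-N^2\big(\Courant{Nx,y,z}+\Courant{x,Ny,z}+\Courant{x,y,Nz}\big)+N^3\Courant{x,y,z}.
\end{eqnarray*}
Because $T_2$ is invertible, every element of $\g$ is uniquely of the form $T_2u$ with $u\in V$, and $N=T_1\circ T_2^{-1}$ yields the single structural relation $N\circ T_2=T_1$, so $N(T_2u)=T_1u$. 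First I would substitute $x=T_2u,\ y=T_2v,\ z=T_2w$ into both identities, turning each $Nx,Ny,Nz$ into $T_1u,T_1v,T_1w$ while the bare $x,y,z$ stay $T_2u,T_2v,T_2w$; this reduces the identities to relations among $T_1,T_2,\rho,\mu,D$, to be closed up by the relative Rota-Baxter identities for $T_1$ and $T_2$ together with the compatibility equations of Proposition~\ref{compatible}.

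For the binary identity, set $a=\rho(T_2u)v-\rho(T_2v)u$ and $b=\rho(T_1u)v-\rho(T_1v)u$. The relative Rota-Baxter identity for $T_2$ gives $[T_2u,T_2v]=T_2a$, hence $N[x,y]=NT_2a=T_1a$, whereas the first equation of Proposition~\ref{compatible} (with $i=1,j=2$) gives $[T_1u,T_2v]+[T_2u,T_1v]=T_1a+T_2b$. The inner bracket therefore collapses to $T_1a+T_2b-T_1a=T_2b$, and applying $N$ gives $NT_2b=T_1b=[T_1u,T_1v]=[Nx,Ny]$, as needed.

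For the ternary identity I would group the three brackets in the first parenthesis and the three in the second by how many factors $T_1$ they contain. After substitution the first parenthesis is the sum $P_1$ of the three brackets with two $T_1$'s, the second is the sum $P_2$ of the three with one $T_1$, and the last term is $P_3=\Courant{T_2u,T_2v,T_2w}$, so the right-hand side reads $NP_1-N^2P_2+N^3P_3$. Applying the second equation of Proposition~\ref{compatible} with $i=1,j=2$ to $P_1$ and with $i=2,j=1$ to $P_2$, and the relative Rota-Baxter identity for $T_2$ to $P_3$, gives $P_1=T_1\alpha+T_2\beta$, $P_2=T_2\alpha'+T_1\beta'$ and $P_3=T_2\beta'$, where $\beta=D(T_1u,T_1v)w+\mu(T_1v,T_1w)u-\mu(T_1u,T_1w)v$, $\beta'=D(T_2u,T_2v)w+\mu(T_2v,T_2w)u-\mu(T_2u,T_2w)v$, and $\alpha,\alpha'$ denote the two mixed $D$-$\mu$ expressions. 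The crux is a pair of cancellations. First, $\alpha=\alpha'$, since both equal the sum of $D(T_1u,T_2v)w+\mu(T_1v,T_2w)u-\mu(T_1u,T_2w)v$ and $D(T_2u,T_1v)w+\mu(T_2v,T_1w)u-\mu(T_2u,T_1w)v$, merely written in opposite order. Second, using $NT_2=T_1$ one finds $NP_1=NT_1\alpha+T_1\beta$, $N^2P_2=NT_1\alpha'+N^2T_1\beta'$ and $N^3P_3=N^2T_1\beta'$, so the $NT_1\alpha$ terms cancel between $NP_1$ and $N^2P_2$ (here $\alpha=\alpha'$ is exactly what is needed), and the $N^2T_1\beta'$ terms cancel between $N^2P_2$ and $N^3P_3$. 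What remains is $T_1\beta$, which by the relative Rota-Baxter identity for $T_1$ equals $\Courant{T_1u,T_1v,T_1w}=\Courant{Nx,Ny,Nz}$, completing the verification. I expect the only genuine difficulty to be spotting these cancellations and, upstream of them, the fact that the two orderings of the mixed compatibility terms produce the identical block $\alpha=\alpha'$; the individual expansions themselves are routine and rely on the skew-symmetry of $D$ and the auxiliary $\mu$- and $D$-identities following Definition~\ref{defi:representation}.
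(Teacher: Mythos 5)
Your proof is correct and follows essentially the same route as the paper's: substitute $x=T_2u$, $y=T_2v$, $z=T_2w$ using the invertibility of $T_2$, then combine the two compatibility identities of Proposition~\ref{compatible} (with $(i,j)=(1,2)$ and $(2,1)$) and the relative Rota-Baxter identities for $T_1=N\circ T_2$ and $T_2$ so that the mixed terms cancel. Your explicit bookkeeping (naming $\alpha,\alpha',\beta,\beta'$, observing $\alpha=\alpha'$, and cancelling the $NT_1\alpha$ and $N^2T_1\beta'$ terms) is exactly the computation the paper carries out implicitly in its two displayed applications of Proposition~\ref{compatible}.
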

\begin{proof}
Since $T_2$ is invertible, for all $x,y,z\in\g$, there exist $u,v,w\in V$, such that $x=T_2u,~y=T_2v,~z=T_2w$. Thus it is sufficient to show that
{\footnotesize
\begin{eqnarray}
~\label{NIJE1}[NT_2u,NT_2v]&=&N\Big([NT_2u,T_2v]+[T_2u,NT_2v]\Big)-N^2[T_2u,T_2v],\\
~\label{NIJE2}\Courant{NT_2u,NT_2v,NT_2w}&=&N\Big(\Courant{NT_2u,NT_2v,T_2w}+\Courant{NT_2u,T_2v,NT_2w}+\Courant{T_2u,NT_2v,NT_2w}\Big)\\
~ \nonumber&&-N^2\Big(\Courant{NT_2u,T_2v,T_2w}+\Courant{T_2u,NT_2v,T_2w}+\Courant{T_2u,T_2v,NT_2w}\Big)\\
~ \nonumber &&+N^3\Courant{T_2u,T_2v,T_2w}.
\end{eqnarray}}
Since $T_1=N\circ T_2$ is a relative Rota-Baxter operator, the left hand side of Eqs. \eqref{NIJE1} and \eqref{NIJE2} are
$$NT_2\Big(\rho(NT_2u)v-\rho(NT_2v)u\Big)$$
and
$$NT_2\Big(D(NT_2u,NT_2v)w+\mu(NT_2v,NT_2w)u-\mu(NT_2u,NT_2w)v\Big)$$
respectively.
Since $T_1=N\circ T_2$ and $T_2$ are compatible, by Proposition \ref{compatible}, we have
\begin{eqnarray}
~\nonumber &&[NT_2u,T_2v]+[T_2u,NT_2v]\\
~ \label{N}&=&T_2\Big(\rho(NT_2u)v-\rho(NT_2v)u\Big)+NT_2\Big(\rho(T_2u)v-\rho(T_2v)u\Big)\\
~ \nonumber&=&T_2\Big(\rho(NT_2u)v-\rho(NT_2v)u\Big)+N[T_2u,T_2v].
\end{eqnarray}
The action of $N$ on both sides of \eqref{N} yields Eq \eqref{NIJE1}. On the other hand, as for \eqref{NIJE2} we have 
\begin{eqnarray*}
~ &&\Courant{NT_2u,NT_2v,T_2w}+\Courant{T_2u,NT_2v,NT_2w}+\Courant{NT_2u,T_2v,NT_2w}\\
~ &=&T_2\Big(D(NT_2u,NT_2v)w+\mu(NT_2v,NT_2w)u-\mu(NT_2u,NT_2w)v\Big)\\
~ &&+NT_2\Big(D(NT_2u,T_2v)w+\mu(NT_2v,T_2w)u-\mu(NT_2u,T_2w)v\\
~ &&+D(T_2u,NT_2v)w+\mu(T_2v,NT_2w)u-\mu(T_2u,NT_2w)v\Big),
\end{eqnarray*}
and
\begin{eqnarray*}
~ &&\Courant{NT_2u,T_2v,T_2w}+\Courant{T_2u,NT_2v,T_2w}+\Courant{T_2u,T_2v,NT_2w}\\
~ &=&NT_2\Big(D(T_2u,T_2v)w+\mu(T_2v,T_2w)u-\mu(T_2u,T_2w)v\Big)\\
~ &&+T_2\Big(D(NT_2u,T_2v)w+\mu(NT_2v,T_2w)u-\mu(NT_2u,T_2w)v\\
~ &&+D(T_2u,NT_2v)w+\mu(T_2v,NT_2w)u-\mu(T_2u,NT_2w)v\Big).
\end{eqnarray*}
Thus the right hand side of Eq \eqref{NIJE2} is
$$NT_2\Big(D(NT_2u,NT_2v)w+\mu(NT_2v,NT_2w)u-\mu(NT_2u,NT_2w)v\Big),$$
which is exactly the left hand side of \eqref{NIJE2}. This finishes the proof.
\end{proof}

Given a relative Rota-Baxter-Nijenhuis structure $(T,S,N)$, by Theorem \ref{Nij}, $T \circ S=N\circ T$ is a relative Rota-Baxter operator. However, in general, $T$ and $T\circ S$ are not compatible spontaneously. Indeed, we can show that they are compatible under a certain condition.
\begin{pro}\label{pro:Nij}
Let $(T,S,N)$ be a relative Rota-Baxter-Nijenhuis structure on a \LYP
pair $\Big((\g,[\cdot,\cdot],\Courant{\cdot,\cdot,\cdot}),(V;\rho,\mu)\Big)$ satisfying
\begin{equation}
	D(Tu,Tv)(Sw)+\mu(Tv,Tw)(Su)-\mu(Tu,Tw)(Sv)=S\big(\Courant{u,v,w}^T\big), \quad \forall u,v,w \in \g.\label{strong2}
\end{equation}
Then $T$ and $T \circ S$ are compatible relative Rota-Baxter operators.
\end{pro}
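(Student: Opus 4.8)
The plan is to apply the compatibility criterion of Proposition \ref{compatible}. By hypothesis $T$ is a relative Rota-Baxter operator, and by Theorem \ref{Nij}(ii) together with \eqref{ON1} so is $T\circ S=N\circ T$; thus, setting $T_1:=T$ and $T_2:=T\circ S$, both are relative Rota-Baxter operators, and it remains only to verify the two mixed identities of Proposition \ref{compatible}, since the diagonal cases $i=j$ reduce to the defining equations of the $T_i$. These identities are moreover symmetric under $i\leftrightarrow j$, so I would treat only $i=1,\,j=2$.

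For the binary identity
\[
[Tu,T_2v]+[T_2u,Tv]=T\big(\rho(T_2u)v-\rho(T_2v)u\big)+T_2\big(\rho(Tu)v-\rho(Tv)u\big),
\]
I would expand the left-hand side using the relative Rota-Baxter identity for $T$ and the substitution $T_2=T\circ S$; after cancelling the two terms it shares with the first summand on the right, the equation collapses to the single requirement
\[
\rho(Tu)(Sv)-\rho(Tv)(Su)=S\big(\rho(Tu)v-\rho(Tv)u\big).
\]
This is exactly what \eqref{ON2} becomes once $[u,v]_S^T$ and $[u,v]^{N\circ T}$ are written out via \eqref{induce1} and the common terms $\rho(NTu)v$ and $\rho(NTv)u$ are cancelled (using $N\circ T=T\circ S$). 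Hence the binary identity holds with no extra hypothesis.

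The ternary identity is where the assumption \eqref{strong2} is used. Each of the three cyclic terms on its left-hand side has all of its entries in $\Img T$, so expanding them by the relative Rota-Baxter identity for $T$ yields nine summands; six of these coincide with the six summands of the first $T(\cdots)$ block on the right and cancel, while the remaining three assemble into $T\big(D(Tu,Tv)(Sw)+\mu(Tv,Tw)(Su)-\mu(Tu,Tw)(Sv)\big)$. This must be matched against the second block $T_2\big(\Courant{u,v,w}^T\big)=T\circ S\big(\Courant{u,v,w}^T\big)$, so the residual requirement is
\[
T\Big(D(Tu,Tv)(Sw)+\mu(Tv,Tw)(Su)-\mu(Tu,Tw)(Sv)-S\big(\Courant{u,v,w}^T\big)\Big)=0,
\]
and the argument of $T$ vanishes by precisely \eqref{strong2}.

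The conceptual heart of the matter, and the reason the extra hypothesis cannot be dropped, is the asymmetry between the two arities: the identity $\rho(Tu)(Sv)-\rho(Tv)(Su)=S(\rho(Tu)v-\rho(Tv)u)$ driving the binary case is \emph{forced} by \eqref{ON2}, whereas its ternary analogue \eqref{strong2} is \emph{not} a consequence of \eqref{ON3} and so must be imposed. This is the manifestation, in the compatibility problem, of the ternary-versus-binary phenomenon emphasised in the introduction. Apart from this point, the only genuine labour is the term-by-term bookkeeping in the ternary cancellation, which becomes routine once every entry has been pushed inside $\Img T$.
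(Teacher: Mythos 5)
Your overall strategy coincides with the paper's: both proofs reduce the statement to the criterion of Proposition \ref{compatible}, your binary computation is the paper's (the paper likewise extracts $\rho(Tu)(Sv)-\rho(Tv)(Su)=S\big([u,v]^T\big)$ from \eqref{ON2}), and your ternary computation is correct for the terms containing two copies of $T$ and one copy of $T\circ S$, where \eqref{strong2} indeed suffices. The gap is your symmetry claim. The ternary condition in Proposition \ref{compatible} is \emph{not} symmetric under $i\leftrightarrow j$: the case $(i,j)=(1,2)$ constrains $\Courant{Tu,Tv,TSw}+\Courant{Tu,TSv,Tw}+\Courant{TSu,Tv,Tw}$ (two $T$'s, one $T\circ S$), while the case $(i,j)=(2,1)$ constrains $\Courant{TSu,TSv,Tw}+\Courant{TSu,Tv,TSw}+\Courant{Tu,TSv,TSw}$ (two $T\circ S$'s, one $T$). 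Both are required by Definition \ref{compO}: they are the coefficients of $k_1^2k_2$ and of $k_1k_2^2$, respectively, in the Rota-Baxter identity for $k_1T+k_2(T\circ S)$, and the second does not follow from the first by relabelling. You prove only the first.

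The missing case is also not an instance of \eqref{strong2}: expanding its left-hand side by the Rota-Baxter identity for $T$ and cancelling against the right-hand side leaves the requirement (under $T$)
\begin{align*}
&D(TSu,Tv)(Sw)+\mu(TSv,Tw)(Su)-\mu(TSu,Tw)(Sv)\\
&\qquad+D(Tu,TSv)(Sw)+\mu(Tv,TSw)(Su)-\mu(Tu,TSw)(Sv)\\
&=S\Big(D(TSu,Tv)w+\mu(TSv,Tw)u-\mu(TSu,Tw)v\\
&\qquad+D(Tu,TSv)w+\mu(Tv,TSw)u-\mu(Tu,TSw)v\Big),
\end{align*}
and substituting $Su$, $Sv$, or $Sw$ into \eqref{strong2} generates unwanted $S^2$-terms rather than this identity. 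This is exactly where \eqref{ON3} must enter --- note that your argument never invokes \eqref{ON3} at all, which is the tell-tale sign of the omission. The paper obtains the displayed identity by expanding \eqref{ON3}: the terms $D(TSu,TSv)w$, $\mu(TSv,TSw)u$, $\mu(TSu,TSw)v$ cancel from the two sides, and the leftover $S^2\Courant{u,v,w}^T$ is absorbed by applying $S$ to \eqref{strong2}; what survives is precisely the identity above. Adding this step (i.e., treating $(i,j)=(2,1)$ separately via \eqref{ON3} combined with \eqref{strong2}) closes the gap and recovers the paper's proof.
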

\begin{proof}
Since $T$ is a relative Rota-Baxter operator, we have
\begin{eqnarray*}
~ &&[TSu,Tv]+[Tu,TSv]\\
~ &=&T\Big(\rho(TSu)v-\rho(Tv)(Su)+\rho(Tu)(Sv)-\rho(TSv)u\Big)\\
~ &\stackrel{\eqref{ON2}}=&T\Big(\rho(TSu)v-\rho(TSv)u\Big)+TS\Big(\rho(Tu)v-\rho(Tv)u\Big).
\end{eqnarray*}
Moreover, Eqs. \eqref{ON3} and \eqref{strong2} imply that
\begin{eqnarray}
\nonumber && D(TSu,Tv)(Sw)+\mu(TSv,Tw)(Su)-\mu(TSu,Tw)(Sv)\\
\nonumber&&+D(Tu,TSv)(Sw)+\mu(Tv,TSw)(Su)-\mu(Tu,TSw)(Sv)\\
\label{strong1}&=&S\Big(D(TSu,Tv)w+\mu(TSv,Tw)u-\mu(TSu,Tw)v\\
\nonumber&&+D(Tu,TSv)w+\mu(Tv,TSw)u-\mu(Tu,TSw)v\Big).
\end{eqnarray}
By the fact that $T$ is a relative Rota-Baxter operator again, we have
\begin{eqnarray*}
&&\Courant{TSu,TSv,Tw}+\Courant{TSu,Tv,TSw}+\Courant{Tu,TSv,TSw}\\
&=&T\Big(D(TSu,TSv)w+\mu(TSv,Tw)(Su)-\mu(TSu,Tw)(Sv)\\
 &&+D(TSu,Tv)(Sw)+\mu(Tv,TSw)(Su)-\mu(TSu,TSw)v\\
&&+D(Tu,TSv)(Sw)+\mu(TSv,TSw)u-\mu(Tu,TSw)(Sv)\Big)\\
&\stackrel{\eqref{strong1}}=&T\Big(D(TSu,TSv)w+\mu(TSv,TSw)u-\mu(TSu,TSw)v\Big)\\
&&+T\circ S\Big(D(TSu,Tv)w+\mu(TSv,Tw)u-\mu(TSu,Tw)v\\
&&+D(Tu,TSv)w+\mu(Tv,TSw)u-\mu(Tu,TSw)v\Big).
\end{eqnarray*}
Similarly, we deduce from \eqref{strong2} that
\begin{eqnarray*}
~ &&\Courant{Tu,TSv,Tw}+\Courant{Tu,Tv,TSw}+\Courant{TSu,Tv,Tw}\\
~ &=&T\circ S\Big(D(Tu,Tv)w+\mu(Tv,Tw)u-\mu(Tu,Tw)v\Big)\\
~ &&+T\Big(\rho(TSu,Tv)w+\mu(TSv,Tw)u-\mu(TSu,Tw)v+D(Tu,TSv)w\\
~ &&+\mu(Tv,TSw)u-\mu(Tu,TSw)v\Big).
\end{eqnarray*}
By Proposition \ref{compatible}, the conclusion thus follows.
\end{proof}

\begin{rmk}
In the context of Lie algebras or associative algebras, for any $k\in \mathbb N$, $T_k=N^k\circ T$ are relative Rota-Baxter operators and what is more, $T_k$ and $T_l$ are compatible. However, this property fails in the context of $3$-Lie algebras (\cite{Zhao-Liu}) and \LYA s, which leads to a different content of the relative Rota-Baxter-Nijenhuis structures on ternary case.
\end{rmk}
\emptycomment{
\section{Maurer-Cartan operators on twilled \LYA s}
In this section, we introduce the notions of twilled \LYA s and Maurer-Cartan operators on twilled \LYA s and investigate the relationship between Maurer-Cartan operators and relative Rota-Baxter-Nijenhuis structures.

\begin{defi}
Let $(\g,\br_{_\g},\ltp_\g)$ be a \LYA ~and admit a decomposition into two subspaces: $\g=\g_1\oplus\g_2$. The triple $(\g,\g_1,\g_2)$ is called a {\bf twilled \LYA} if $\g_1$ and $\g_2$ are subalgebras of $\g$. We usually denote a twilled \LYA ~by $\g=\g_1\bowtie \g_2$.
\end{defi}

\begin{rmk}
Let $(\g,\br_{_\g},\ltp_\g)$ be a \LYA ~and $E:\g\longrightarrow\g$ a linear map. In \cite{Sheng Zhao}, we proved that $E$ is a product structure \footnote{$E$ is a involution, i.e., $E^2={\Id}$, and is a Nijenhuis operator at the same time.} if and only if $\g$ admits a decomposition into subalgebras. Thus $\g$ is a twilled \LYA ~if and only if there exists a product structure $E$ on $\g$. One can see \cite{Sheng Zhao} for more details about product structures and complex structures on \LYA s.
\end{rmk}

It is easy to see that if $\g=\g_1\bowtie \g_2$ is a twilled \LYA, then linear maps $\rho_1:\g_1\to \gl(\g_2)$, $\mu_1:\otimes^2\g_1\to \gl(\g_2)$ and $\rho_2:\g_2\to \gl(\g_1)$, $\mu_2:\otimes^2\g_2\to \gl(\g_1)$ such that $(\g_2;\rho_1,\mu_1)$ is a representation of $(\g_1,\br__{\g_1},\ltp_{\g_1})$ and $(\g_1;\rho_2,\mu_2)$ is a representation of $(\g_2,\br__{\g_2},\ltp_{\g_2})$, where $(\br__{\g_1},\ltp_{\g_1})$ and $(\br__{\g_2},\ltp_{\g_2})$ are the \LYA ~structure $(\br__\g,\ltp_\g)$ restricting to $\g_1$ and $\g_2$ respectively:
\begin{eqnarray*}
[x+u,y+v]_\g&=&[x,y]_{\g_1}+\rho_2(u)y-\rho_2(v)x+[u,v]_{\g_2}+\rho_1(x)v-\rho_1(y)u,\\
\Courant{x+u,y+v,z+w}_\g&=&\Courant{x,y,z}_{\g_1}+D_2(u,v)z+\mu_2(v,w)x-\mu_2(u,w)y\\
~&&+\Courant{u,v,w}_{\g_2}+D_1(x,y)w+\mu_1(y,z)u-\mu_1(x,z)v,\,\forall x,y,z\in \g_1,~u,v,w\in \g_2,
\end{eqnarray*}
where $D_1:=D_{\rho_1,\mu_1}$ and $D_2:=D_{\rho_2,\mu_2}$.

\begin{defi}
Let $\g=\g_1\bowtie \g_2$ be a twilled \LYA, and $\Omega:\g_1\longrightarrow \g_2$ a linear map. We call the operator $\Omega$ a {\bf Maurer-Cartan operator} on $\g$ if $\Omega$ is a relative Rota-Baxter operator on $\g_2$ with respect to $(\g_1;\rho_2,\mu_2)$ and satisfies the following conditions
\begin{eqnarray*}
\Omega([x,y]_{\g_1})&=&\rho_1(x)\big(\Omega(y)\big)-\rho_1(y)\big(\Omega(x)\big),\\
\Omega(\Courant{x,y,z}_{\g_1})&=&D_1(x,y)\big(\Omega(z)\big)+\mu_1(y,z)\big(\Omega(x)\big)-\mu_1(x,z)\big(\Omega(y)\big),\quad \forall x,y,z\in \g_1.
\end{eqnarray*}
\end{defi}

Let $T:V\longrightarrow\g$ be a relative Rota-Baxter operator on a \LYA ~$(\g,\br,,\ltp)$ with respect  to $(V;\rho,\mu)$. We denote by $V^T=(V,\br^{^T},\ltp^T)$ the sub-adjecent \LYA ~given by \eqref{induce1} and \eqref{induce2}. Recall that in \cite{Zhao Qiao}, we constructed a representation of $V^T$ on $\g$.

\begin{lem}\label{antirep}
Define linear maps $\varrho^T:V\to \gl(\g)$ and $\varpi^T:\otimes^2V\to \gl(\g)$ by
\begin{eqnarray}
\label{repre1}\varrho^T(u)x&:=&[Tu,x]+T\big(\rho(x)u\big),\\
\label{repre2}\varpi^T(u,v)x&:=&\Courant{x,Tu,Tv}-T\big(D_{\rho,\mu}(x,Tu)v-\mu(x,Tv)u\big), \quad \forall x\in \g,~u,v \in V.
\end{eqnarray}
Then $(\g;\rho^T,\mu^T)$ is a representation of $V^T$, where $D^T:=D_{\rho^T,\mu^T}$ is given by
\begin{eqnarray}
\label{repre3}D^T(u,v)x=\Courant{Tu,Tv,x}-T\Big(\mu(Tv,x)u-\mu(Tu,x)v\Big), \quad \forall u,v \in V, ~x\in \g.
\end{eqnarray}
\end{lem}

Thus we have the following theorem.
\begin{thm}\label{twilled}
With the above notations, $(\g\oplus V,\br__T,\ltp_T)$ is a \LYA, where
\begin{eqnarray*}
[x+u,y+v]_T&=&[x,y]+\varrho^T(u)y-\varrho^T(v)x+[u,v]^T+\rho(x)v-\rho(y)u,\\
\Courant{x+u,y+v,z+w}_T&=&\Courant{x,y,z}+D^T(u,v)z+\varpi^T(v,w)x-\varpi^T(u,w)y\\
~ &&+\Courant{u,v,w}^T+D(x,y)w+\mu(y,z)u-\mu(x,z)v,\quad \forall x,y,z\in \g,~u,v,w\in V.
\end{eqnarray*}
The corresponding twilled \LYA ~is denoted by $\g\bowtie V^T$. A linear map $\Omega:\g\longrightarrow V$ is a Maurer-Cartan operator on the twilled \LYA ~$\g\bowtie V^T$ if and only if for all $x,y,z\in \g$, the following conditions are satisfied
\begin{eqnarray}
\label{Mc3}\Omega([x,y])&=&\rho(x)\big(\Omega(y)\big)-\rho(y)\big(\Omega(x)\big),\\
\label{Mc4}\Omega(\Courant{x,y,z})&=&D(x,y)\big(\Omega(z)\big)+\mu(y,z)\big(\Omega(x)\big)-\mu(x,z)\big(\Omega(y)\big),\\
~\label{Mc1}[\Omega(x),\Omega(y)]^T&=&\Omega\Big(\varrho^T(\Omega(x))y-\varrho^T(\Omega(y))x\Big),\\
~\label{Mc2}\Courant{\Omega(x),\Omega(y),\Omega(z)}^T&=&\Omega\Big(D^T(\Omega(x),\Omega(y))z+\varpi^T(\Omega(y),\Omega(z))x-\varpi^T(\Omega(x),\Omega(z))y\Big).
\end{eqnarray}
\end{thm}

By Eqs. \eqref{Mc1} and \eqref{Mc2}, the linear map $\Omega:\g\longrightarrow V$ is a relative Rota-Baxter operator on the sub-adjacent \LYA ~$V^T$ with respect to $(\g;\varrho^T,\varpi^T)$. Thus it induces a \LYA ~structure $(\br^{^\Omega},\ltp^\Omega)$ on $\g$, denoted by $\g^\Omega$, where
\begin{eqnarray*}
[x,y]^\Omega&=&\varrho^T(\Omega(x))y-\varrho^T(\Omega(y))x,\\
\Courant{x,y,z}^\Omega&=&D^T(\Omega(x),\Omega(y))z+\varpi^T(\Omega(y),\Omega(z))x-\varpi^T(\Omega(x),\Omega(z))y,\quad \forall x,y,z\in \g.
\end{eqnarray*}

Moreover, by Lemma \ref{antirep}, the linear maps $\varrho^\Omega:\g\to \gl(V)$ and $\varpi^\Omega:\otimes^2\g\to \gl(V)$ given by
\begin{eqnarray*}
\varrho^\Omega(x)u&=&[\Omega(x),u]^T+\Omega\Big(\varrho^T(u)x\Big),\\
\varpi^\Omega(x,y)u&=&\Courant{u,\Omega(x),\Omega(y)}^T-\Omega\Big(D^T(u,\Omega(x))y-\varpi^T(u,\Omega(y))x\Big), \quad \forall u,v\in V,~x\in \g,
\end{eqnarray*}
form a representation of $\g^\Omega$ on $V$,
where $D^\Omega:=D_{\varrho^\Omega,\varpi^\Omega}$ is given by
\begin{eqnarray*}
D^\Omega(x,y)u=\Courant{\Omega(x),\Omega(y),u}^T-\Omega\Big(\varpi^T(\Omega(y),u)x-\varpi^T(\Omega(x),u)y\Big), \quad \forall u,v\in V,~x\in \g.
\end{eqnarray*}
On $\g\oplus V$, for all $x,y,z\in \g$ and $u,v,w\in V$, we define $(\br_{_T^\Omega},\ltp_T^\Omega)$ to be
\begin{eqnarray*}
[u+x,v+y]_T^\Omega&=&[u,v]^T+\varrho^\Omega(x)v-\varrho^\Omega(y)u+[x,y]^\Omega+\varrho^T(u)y-\varrho^T(v)x,\\
\Courant{u+x,v+y,w+z}_T^\Omega&=&\Courant{u,v,w}^T+D^\Omega(x,y)w+\varpi^\Omega(y,z)u-\varpi^\Omega(x,z)v\\
~ &&+\Courant{x,y,z}^\Omega+D^T(u,v)z+\varpi^T(v,w)x-\varpi^T(u,w)y.
\end{eqnarray*}

\begin{pro}
Let $\Omega:\g\longrightarrow V$ be a Maurer-Cartan operator on twilled \LYA ~$\g\bowtie V^T$, then we have
$(\g\oplus V,\br__T^\Omega,\ltp_T^\Omega)$ is a \LYA, and the corresponding twilled \LYA ~is denoted by $V^T\bowtie\g^\Omega$.
\end{pro}
\begin{proof}
By Theorem \ref{twilled}, we can deduce that $(\g\oplus V,\br__T^\Omega,\ltp_T^\Omega)$ is a \LYA ~directly.
\end{proof}

The following proposition says that a Maurer-Cartan operator on a twilled \LYA ~gives rise to a Nijenhuis operator.
\begin{pro}\label{pro:fund}
Let $\Omega:\g\longrightarrow V$ be a Maurer-Cartan operator on the twilled \LYA ~$\g\bowtie V^T$ such that $S=\Omega \circ T$ satisfies Eq. \eqref{strong2}. Then we have
$N=T\circ \Omega$ is a Nijenhuis operator on \LYA ~$(\g,\br,,\ltp)$.
\end{pro}
\begin{proof}
First, since $T$ is a relative Rota-Baxter operator, we have that
\begin{eqnarray*}
[T\Omega x,T\Omega y]&=&T\Big([\Omega x,\Omega y]^T\Big)\stackrel{\eqref{Mc1}}=T\Big(\varrho^T(\Omega x)y-\varrho^T(\Omega y)x\Big)\\
~ &\stackrel{\eqref{repre1}}=&T\Omega\Big([T\Omega x,\Omega y]+[\Omega x,T\Omega y]-T\Omega[x,y]\Big).
\end{eqnarray*}
For the other equality, on one hand, we have
\begin{eqnarray}
\nonumber\Courant{T\Omega x,T\Omega y,T\Omega z}&=&T\Big(\Courant{\Omega x,\Omega y,\Omega z}^T\Big)\\
~\label{underline} &\stackrel{\eqref{Mc2}}=&T\Omega\Big(D^T(\Omega x,\Omega y)z+\varpi^T(\Omega y,\Omega z)x-\varpi^T(\Omega x,\Omega z)y\Big)\\
~ \nonumber&\stackrel{\eqref{repre2},\eqref{repre3}}=&T\Omega\Big(\Courant{T\Omega x,T\Omega y,z}+\Courant{x,T\Omega y,T\Omega z}-\Courant{y,T\Omega x,T\Omega z}\Big)\\
~ \nonumber&&\underline{-T\Omega T\Big(\mu(T\Omega y,z)(\Omega x)-\mu(T\Omega x,z)(\Omega y)+D(x,T\Omega y)(\Omega z)}\\
~ \nonumber&&\underline{-\mu(x,T\Omega z)(\Omega y)-D(y,T\Omega x)(\Omega z)+\mu(y,T\Omega z)(\Omega x)\Big)},
\end{eqnarray}
on the other hand, we also have
\begin{eqnarray*}
~ &&T\Omega\Big(T\Omega\big(\Courant{T\Omega x,y,z}+\Courant{x,T\Omega y,z}+\Courant{x,y,T\Omega z}\big)-(T\Omega )^2\Courant{x,y,z}\Big)\\
~ &\stackrel{\eqref{Mc4}}=&T\Omega T\Big(D(T\Omega x,y)(\Omega z)+\mu(y,z)(\Omega T\Omega x)-\mu(T\Omega x,z)(\Omega y)\\
~ &&~~+D(x,T\Omega y)(\Omega z)+\mu(T\Omega y,z)(\Omega x)-\mu(x,z)(\Omega T\Omega y)\\
~ &&~~+D(x,y)(\Omega T\Omega z)+\mu(y,T\Omega z)(\Omega x)-\mu(x,T\Omega z)(\Omega y)\Big)\\
~ &&~~-T\Omega T\Omega T\Big(D(x,y)(\Omega z)+\mu(y,z)(\Omega x)-\mu(x,z)(\Omega y)\Big)\\
~ &\stackrel{\eqref{strong2}}=&{\rm the ~~underlined~~ patr~~ of~~ \eqref{underline}}.
\end{eqnarray*}
Thus $N$ is a Nijenhuis operator on \LYA ~$(\g,\br,,\ltp)$.
\end{proof}

The following theorem is the main result in this section. It relates Maurer-Cartan operators with relative Rota-Baxter-Nijenhuis structures, that is, a relative Rota-Baxter-Nijenhuis structure gives rise to a Maurer-Cartan operator under a suitable condition.
\begin{thm}
Let $(T,N,S)$ be a strong relative Rota-Baxter-Nijenhuis structure on a \LYP pair $\Big((\g,\br,,\ltp),(V;\rho,\mu)\Big)$. Suppose that $T$ is invertible and set $\Omega=T^{-1}\circ N=S\circ T^{-1}$. Then we have
\begin{itemize}
\item [\rm (i)] $\Omega:\g\longrightarrow V$ is a Maurer-Cartan operator on the twilled \LYA ~$\g\bowtie V^T$;
\item [\rm (ii)] $T$ is a Maurer-Cartan operator on the twilled \LYA ~$V^T\bowtie\g^\Omega$;
\item [\rm (iii)] $T$ is a relative Rota-Baxter operator on the \LYA ~$\g^\Omega$ with respect to the representation $(V;\varrho^\Omega,\varpi^\Omega)$.
\end{itemize}
\end{thm}
\begin{proof}
To prove (i), we need to check the conditions \eqref{Mc3}-\eqref{Mc2}.
Since $N=T \circ \Omega$ is a Nijenhuis operator on $\g$, thus we have that
{\footnotesize
\begin{eqnarray}
[T\Omega x,T\Omega y]&=&T\Omega\Big([T\Omega x,y]+[x,T\Omega y]-T\Omega[x,y]\Big),\\
\label{Nijen2}\Courant{T\Omega x,T\Omega y,T\Omega z}&=&T\Omega\Big(\Courant{T\Omega x,T\Omega y,z}+\Courant{T\Omega x,y,T\Omega z}+\Courant{x,T\Omega y,T\Omega z}\\
~\nonumber &&\underline{-T\Omega\big(\Courant{T\Omega x,y,z}+\Courant{x,T\Omega y,z}+\Courant{x,y,T\Omega z}\big)+(T\Omega)^2\Courant{x,y,z}\Big)}.
\end{eqnarray}}
By a direct computation, we have
\begin{eqnarray*}
\varrho^T(\Omega(x))y-\varrho^T(\Omega(y))x=[T\Omega x,y]+[x,T\Omega y]-T\Omega[x,y].
\end{eqnarray*}
Since $T$ is a relative Rota-Baxter operator, we have
\begin{eqnarray}
T\Big([\Omega x,\Omega y]^T\Big)=[\Omega x,\Omega y]=T\Omega\Big(\varrho^T(\Omega(x))y-\varrho^T(\Omega(y))x\Big).\label{prof1}
\end{eqnarray}
Since $T$ is invertible, for $x,y,z\in \g$, there exist $u,v,w\in V$, such that $x=Tu,y=Tv,z=Tw$. Hence, since $(T,S,N)$ is strong, by a direct computation, the underlined part of \eqref{Nijen2} becomes
\begin{eqnarray*}
~ &&T\Omega\Big(\Courant{T\Omega Tu,Tv,Tw}+\Courant{Tu,T\Omega Tv,Tw}+\Courant{Tu,Tv,T\Omega Tw}\Big)-(T\Omega)^2\Courant{Tu,Tv,Tw}\\
~ &=&T\Omega T\Big(D(T\Omega Tu,Tv)w-\mu(T\Omega Tu,Tw)v+D(Tu,T\Omega Tv)w\\
~ &&+\mu(T\Omega Tv,Tw)u+\mu(Tv,T\Omega Tw)u-\mu(Tu,T\Omega Tw)v\Big)\\
~ &=&T\Big(D(T\Omega Tu,Tv)(\Omega Tw)-\mu(T\Omega Tu,Tw)(\Omega Tv)+D(Tu,T\Omega Tv)(\Omega Tw)\\
~ &&+\mu(T\Omega Tv,Tw)(\Omega Tu)+\mu(Tv,T\Omega Tw)(\Omega Tu)-\mu(Tu,T\Omega Tw)(\Omega Tv)\Big)\\
~ &=&T\Big(D(T\Omega x,y)(\Omega z)-\mu(T\Omega x,z)(\Omega y)+D(x,T\Omega y)(\Omega z)\\
~ &&+\mu(T\Omega y,z)(\Omega x)+\mu(y,T\Omega z)(\Omega x)-\mu(x,T\Omega z)(\Omega y)\Big).
\end{eqnarray*}
Moreover, we compute
\begin{eqnarray*}
~ &&D^T(\Omega x,\Omega y)z+\varpi^T(\Omega y,\Omega z)x-\varpi^T(\Omega x,\Omega z)y\\
~ &=&\Courant{T\Omega x,T\Omega y,z}-T\Big(\mu(T\Omega y,z)(\Omega x)-\mu(T\Omega x,z)(\Omega y)\Big)\\
~&&+\Courant{x,T\Omega y,T\Omega z}-T\Big(D(x,T\Omega y)(\Omega z)-\mu(x,T\Omega z)(\Omega y)\Big)\\
~ &&-\Courant{y,T\Omega x,T\Omega z}+T\Big(D(y,T\Omega x)(\Omega z)-\mu(y,T\Omega z)(\Omega x)\Big).
\end{eqnarray*}
Thus we obtain that
\begin{eqnarray}
\label{prof2}T\Big(\Courant{\Omega x,\Omega y,\Omega z}^T\Big)&=&\Courant{T\Omega x,T\Omega y,T\Omega z}\\
~\nonumber &=&T\Omega\Big(D^T(\Omega x,\Omega y)z+\varpi^T(\Omega y,\Omega z)x-\varpi^T(\Omega x,\Omega z)y\Big).
\end{eqnarray}
Since $T$ is invertible, Eqs. \eqref{Mc1} and \eqref{Mc2} follow from Eqs. \eqref{prof1} and \eqref{prof2} directly.
Since $T$ is invertible and $[u,v]_S^T=[u,v]^{T\circ S}$, where $S=\Omega\circ T$, we have
\begin{eqnarray*}
\Omega\Big([x,y]\Big)&=&\Omega\Big([Tu,Tv]\Big)=\Omega T\Big(\rho(Tu)v-\rho(Tv)u\Big)\\
~ &=&\rho(Tu)(\Omega Tv)-\rho(Tv)(\Omega Tu)=\rho(x)(\Omega y)-\rho(y)(\Omega x).
\end{eqnarray*}
Similarly, since $(T,S,N)$ is strong, we have
\begin{eqnarray*}
\Omega\Big(\Courant{Tu,Tv,Tw}\Big)&=&\Omega T\Big(D(Tu,Tv)w+\mu(Tv,Tw)u-\mu(Tu,Tw)v\Big)\\
~ &=&D(Tu,Tv)(\Omega Tw)+\mu(Tv,Tw)(\Omega Tu)-\mu(Tu,Tw)(\Omega Tv)\\
~ &=&D(x,y)(\Omega z)+\mu(y,z)(\Omega x)-\mu(x,z)(\Omega y).
\end{eqnarray*}
This gives Eqs. \eqref{Mc3} and \eqref{Mc4}, which proves (i).

To prove (ii), we need to prove the following equalities
\begin{eqnarray}
\label{THM1}T\Big([u,v]^T\Big)&=&\varrho^T(u)\Big(Tv\Big)-\varrho^T(v)\Big(Tu\Big);\\
\label{THM2}~T\Big(\Courant{u,v,w}^T\Big)&=&D^T(u,v)\Big(Tw\Big)+\varpi^T(v,w)\Big(Tu\Big)-\varpi^T(u,w)\Big(Tv\Big);\\
\label{THM3}~[Tu,Tv]^\Omega&=&T\Big(\varrho^\Omega(Tu)v-\varrho^\Omega(Tv)u\Big);\\
\label{THM4}~\Courant{Tu,Tv,Tw}^\Omega&=&T\Big(D^\Omega(Tu,Tv)w+\varpi^\Omega(Tv,Tw)u-\varpi^\Omega(Tu,Tw)v\Big),
\end{eqnarray}
for all $u,v,w\in V$. Since $T$ is a relative Rota-Baxter operator, first we have
\begin{eqnarray*}
{\rm r.h.s. ~~of~~ \eqref{THM1}}&=&[Tu,Tv]-[Tv,Tu]+T\Big(\rho(Tv)u-\rho(Tu)v\Big)\\
~ &=&[Tu,Tv]={\rm l.h.s~~ of~~ \eqref{THM1}}.
\end{eqnarray*}
Second, we have
\begin{eqnarray*}
{\rm r.h.s. ~of ~\eqref{THM2}}&=&\Courant{Tu,Tv,Tw}-T\Big(\mu(Tv,Tw)u-\mu(Tu,Tw)v\Big)\\
~ &&+\Courant{Tu,Tv,Tw}-T\Big(D(Tu,Tv)w-\mu(Tu,Tw)v\Big)\\
~ &&-\Courant{Tv,Tu,Tw}+T\Big(D(Tv,Tu)w-\mu(Tv,Tw)w\Big)\\
~ &=&\Courant{Tu,Tv,Tw}+2\Big(\Courant{Tu,Tv,Tw}-T\big(D(Tu,Tv)w+\mu(Tv,Tw)u-\mu(Tu,Tw)v\big)\Big)\\
~ &=&\Courant{Tu,Tv,Tw}={\rm l.h.s. ~of ~\eqref{THM2}}.
\end{eqnarray*}
Third, on one hand, we have
\begin{eqnarray*}
[Tu,Tv]^\Omega&=&\varrho^T(\Omega Tu)(Tv)-\varrho^T(\Omega Tv)(Tu)\\
~ &=&[T\Omega Tu,Tv]-[T\Omega Tv,Tu]+T\Big(\rho(Tv)(\Omega Tu)-\rho(Tu)(\Omega Tv)\Big)\\
~ &=&[T\Omega Tu,Tv]-[T\Omega Tv,Tu]-T\Omega[Tu,Tv],
\end{eqnarray*}
on the other hand, we also have that
\begin{eqnarray*}
~ &&T\Big(\varrho^\Omega(Tu)v-\varrho^\Omega(Tv)u\Big)\\
~ &=&T\Big([\Omega Tu,v]^T+\Omega\big(\varrho^T(v)Tu\big)-[\Omega Tv,u]^T+\Omega\big(\varrho^T(u)Tv\big)\Big)\\
~ &=&[T\Omega Tu,Tv]-[T\Omega Tv,Tu]-T\Omega[Tu,Tv].
\end{eqnarray*}
Thus, \eqref{THM3} holds. Lastly, we have
\begin{eqnarray*}
{\rm r.h.s.~~of ~~\eqref{THM4}}&=&T\Big(D^\Omega(Tu,Tv)w+\varpi^\Omega(Tv,Tw)u-\varpi^\Omega(Tu,Tw)v\Big)\\
~ &=&T\Big(\Courant{\Omega Tu,\Omega Tv,w}^T\Big)-T\Omega\Big(\varpi^T(\Omega Tv,w)(Tu)-\varpi^T(\Omega Tu,w)(Tv)\Big)\\
~ &&+T\Big(\Courant{u,\Omega Tv,\Omega Tw}^T\Big)-T\Omega\Big(D^T(u,\Omega Tv)(Tw)-\varpi^T(u,\Omega Tw)(Tv)\Big)\\
~ &&-T\Big(\Courant{v,\Omega Tu,\Omega Tw}^T\Big)+T\Omega\Big(D^T(v,\Omega Tu)(Tw)-\varpi^T(v,\Omega Tw)(Tu)\Big)\\
~ &=&\Courant{T\Omega Tu,T\Omega Tv,Tw}+\Courant{Tu,T\Omega Tv,T\Omega Tw}-\Courant{Tv,T\Omega Tu,T\Omega Tw}\\
~ &&-T\Omega T\Big(\mu(T\Omega Tv,Tw)u-\mu(T\Omega Tu,Tw)v+D(Tu,T\Omega Tv)w-\mu(Tu,T\Omega Tw)v\\
~ &&-D(Tv,T\Omega Tu)w+\mu(Tv,T\Omega Tw)u\Big)\\
~ &=&\Courant{T\Omega Tu,T\Omega Tv,Tw}+\Courant{Tu,T\Omega Tv,T\Omega Tw}-\Courant{Tv,T\Omega Tu,T\Omega Tw}\\
~ &&-T\Big(\mu(T\Omega Tv,Tw)(\Omega Tu)-\mu(T\Omega Tu,Tw)(\Omega Tv)+D(Tu,T\Omega Tv)(\Omega Tw)\\
~ &&-\mu(Tu,T\Omega Tw)(\Omega Tv)-D(Tv,T\Omega Tu)(\Omega Tw)+\mu(Tv,T\Omega Tw)(\Omega Tu)\Big)\\
~ &=&\Courant{Tu,Tv,Tw}^\Omega={\rm l.h.s. ~of ~~\eqref{THM4}}.
\end{eqnarray*}
This proves (ii)

The proof of (iii) is similar to that of (ii). We omit the details.
This finishes the proof.
\end{proof}

By Theorem \ref{Nij}, Proposition \ref{pro:Nij}, and Proposition \ref{pro:fund}, we have the following corollary.
\begin{cor}
Let $(T,S,N)$ be a relative Rota-Baxter-Nijenhuis structure on a \LYP pair $\Big((\g,\br,,\ltp),(V;\rho,\mu)\Big)$ and $\Omega:\g\longrightarrow V$ a Maurer-Cartan operator on the twilled \LYA ~$\g\bowtie V^T$. Set $N=T\circ \Omega$. Then $N\circ T$ is a relative Rota-Baxter operator on $(\g,\br,,\ltp)$ with respect to $(V;\rho,\mu)$. Moreover, if $(T,S,N)$ is strong, then $T$ and $N\circ T$ are compatible.
\end{cor}

\begin{ex}\label{ex:integral}
Let $\g=C^\infty([0,1])$ endowed with the following operations
\begin{eqnarray*}
[f,g](x)&=&f(x)g'(x)-g(x)f'(x),\\
\Courant{f,g,h}(x)&=&f(x)g'(x)h'(x)-g(x)f'(x)h'(x)-h(x)\Big(f(x)g''(x)-g(x)f''(x)\Big),\quad \forall x\in [0,1],
\end{eqnarray*}
for all $ f,g,h\in \g$. Then $(\g,\br,,\ltp)$ forms a \LYA. The {\em integral operator} $R:\g\longrightarrow\g$ defined to be
$$R(f)(x):=\int_0^xf(t){\mathrm d}t,$$
is a Rota-Baxter operator on $\g$. For $\lambda\in \mathbb K$, we define
$$\Omega:\g\longrightarrow\g$$
to be
$$\Omega(f)(x):=\lambda f'(x),\quad \forall x\in [0,1],$$
for all $f\in \g$. Then by a direct computation, we have $\Omega$ is a Maurer-Cartan operator on the twilled \LYA ~$\g\bowtie\g^R$.
\end{ex}
}

\section{$r$-matrix-Nijenhuis structures and Rota-Baxter-Nijenhuis structures}
In this section, we introduce the notions of $r$-matrix-Nijenhuis structures and Rota-Baxter-Nijenhuis structures on \LYA s and investigate the relationship between them.
If $(V;\rho,\mu)$ is a representation of a \LYA ~$(\g,\br,,\ltp)$, then the dual representation of $(V;\rho,\mu)$ is given by
$$\Big(V^*;\rho^*,-\mu^*\tau\Big),$$
where $\rho^*,~\mu^*$ are dual to $-\rho,~-\mu$ respectively, and $\tau:\otimes^2\g\to \otimes^2\g$ is the switching map, i.e.,
$$\tau(x\otimes y)=y\otimes x,\quad \forall x\otimes y\in \otimes^2\g.$$
See \cite{SZ1} for ore details about dual representation of \LYA s.

\begin{ex}{\rm (\cite{SZ1})}
Let $(\g;\ad,\frkR)$ be the adjoint representation of a \LYA ~$\g$ given in Example \ref{ad}, then $(\g^*;\ad^*,-\frkR^*\tau)$ is the dual representation of $(\g;\ad,\frkR)$, which is called the {\bf coadjoint representation}.
\end{ex}
\emptycomment{
The following notion of symplectic structures is standard.
\begin{defi}{\rm (\cite{SZ1})}
 Let $(\g,[\cdot,\cdot],\Courant{\cdot,\cdot,\cdot})$ be a Lie-Yamaguti algebra. A {\bf symplectic structure} on $\g$ is a nondegenerate, skew-symmetric bilinear form $\omega \in \wedge^2\g^*$ such that for all $x,y,z,w \in \g$,
 \begin{eqnarray*}
 \omega(x,[y,z])+\omega(y,[z,x])+\omega(z,[x,y])&=&0,\\
 \omega(z,\Courant{x,y,w})-\omega(x,\Courant{w,z,y})+\omega(y,\Courant{w,z,x})-\omega(w,\Courant{x,y,z})&=&0.
 \end{eqnarray*}
 \end{defi}}

\emptycomment{
Let $\g$ be a vector space for the moment, for any $\pi\in \wedge^2\g$, there induces a linear map $\pi^\sharp:\g^*\longrightarrow\g$ defined to be
\begin{eqnarray*}
\langle \pi^\sharp(\alpha),\beta\rangle=\pi(\alpha,\beta),\quad \forall \alpha,\beta\in \g^*.
\end{eqnarray*}
If the induced map $\pi^\sharp$ is invertible, then $\pi$ is called {\bf invertible}.

\begin{defi}
Let $(\g,\br,,\ltp)$ be a \LYA. An element $\pi\in \wedge^2\g$ is called a {\bf Lie-Yamaguti $r$-matrix} if it satisfies the following equations
\begin{eqnarray*}
\label{classical} [[\pi,\pi]]=0,&{\rm and}&[[\pi,\pi,\pi]]=0,
\end{eqnarray*}
where $[[\pi,\pi]]\in \otimes^3\g,~[[\pi,\pi,\pi]]\in\otimes^4\g$ are respectively defined to be
\begin{eqnarray*}
\label{r1}[[\pi,\pi]](\alpha,\beta,\gamma)&=&\langle\gamma,[\pi^\sharp(\alpha),\pi^\sharp(\beta)]\rangle+\langle\alpha,[\pi^\sharp(\beta),\pi^\sharp(\gamma)]\rangle
+\langle\beta,[\pi^\sharp(\gamma),\pi^\sharp(\alpha)]\rangle,\\
\label{r2}[[\pi,\pi,\pi]](\alpha,\beta,\gamma,\delta)&=&\langle\gamma,\Courant{\pi^\sharp(\alpha),\pi^\sharp(\beta),\pi^\sharp(\delta)}\rangle
-\langle\alpha,\Courant{\pi^\sharp(\delta),\pi^\sharp(\gamma),\pi^\sharp(\beta)}\rangle\\
~\nonumber &&+\langle\beta,\Courant{\pi^\sharp(\delta),\pi^\sharp(\gamma),\pi^\sharp(\alpha)}\rangle
-\langle\delta,\Courant{\pi^\sharp(\alpha),\pi^\sharp(\beta),\pi^\sharp(\gamma)}\rangle,
\end{eqnarray*}
for all $\alpha,\beta,\gamma,\delta\in \g^*$.
\end{defi}

\begin{ex}\label{ex:sy}
Let $(\g,\br,,\ltp)$ be a $2$-dimensional \LYA with a basis  $\{e_1,e_2\}$, where $(\br,,\ltp)$ is defined to be
$$[e_1,e_2]=e_1,\quad\Courant{e_1,e_2,e_2}=e_1.$$
Then $\pi=ke_1\wedge e_2$ is an $r$-matrix of $\g$.
\end{ex}}
\emptycomment{
The following proposition clarifies the relationship between Lie-Yamaguti $r$-matrices and relative Rota-Baxter operators.
\begin{pro}\label{rO}
Let $(\g,\br,,\ltp)$ be a \LYA. Then $\pi\in \wedge^2\g$ is a Lie-Yamaguti $r$-matrix if and only if $\pi^\sharp:\g^*\longrightarrow\g$ is a relative Rota-Baxter operator on $\g$ with respect to $(\g^*;\ad^*,-\frkR^\tau)$.
\end{pro}
\begin{proof}
Let $\alpha,\beta,\gamma,\delta\in \g^*$. Since $\pi$ is skew-symmetric, we have
\begin{eqnarray*}
\Big\langle\gamma,\Courant{\pi^\sharp(\alpha),\pi^\sharp(\beta),\pi^\sharp(\delta)}\Big\rangle&=&-\langle\frkL^*(\pi^\sharp(\alpha),\pi^\sharp(\beta))\gamma,\pi^\sharp(\delta)\rangle
=\langle\delta,\pi^\sharp(\frkL^*(\pi^\sharp(\alpha),\pi^\sharp(\beta)))\gamma\rangle,\\
-\Big\langle\alpha,\Courant{\pi^\sharp(\delta),\pi^\sharp(\gamma),\pi^\sharp(\beta)}\Big\rangle&=&\langle\frkR^*(\pi^\sharp(\gamma),\pi^\sharp(\beta))\alpha,\pi^\sharp(\delta)\rangle
=-\langle\delta,\pi^\sharp(\frkR^*(\pi^\sharp(\gamma),\pi^\sharp(\beta)))\alpha\rangle,\\
\Big\langle\beta,\Courant{\pi^\sharp(\delta),\pi^\sharp(\gamma),\pi^\sharp(\alpha)}\Big\rangle&=&-\langle\frkR^*(\pi^\sharp(\gamma),\pi^\sharp(\alpha))\beta,\pi^\sharp(\delta)\rangle
=\langle\delta,\pi^\sharp(\frkR^*(\pi^\sharp(\gamma),\pi^\sharp(\alpha)))\beta\rangle.
\end{eqnarray*}
Then Eq. $[[\pi,\pi,\pi]]=0$ reads
\begin{eqnarray*}
\Big\langle\delta,\pi^\sharp\Big(\frkL^*(\pi^\sharp(\alpha),\pi^\sharp(\beta)))\gamma-\frkR^*(\pi^\sharp(\gamma),\pi^\sharp(\beta)))\alpha
+\frkR^*(\pi^\sharp(\gamma),\pi^\sharp(\alpha)))\beta\Big)-\Courant{\pi^\sharp(\alpha),\pi^\sharp(\beta),\pi^\sharp(\gamma)}\Big\rangle=0.
\end{eqnarray*}
For $\delta$ is arbitrary, we have that
$$\Courant{\pi^\sharp(\alpha),\pi^\sharp(\beta),\pi^\sharp(\gamma)}=\pi^\sharp\Big(\frkL^*(\pi^\sharp(\alpha),\pi^\sharp(\beta)))\gamma-\frkR^*(\pi^\sharp(\gamma),\pi^\sharp(\beta)))\alpha
+\frkR^*(\pi^\sharp(\gamma),\pi^\sharp(\alpha)))\beta\Big).$$
Similarly, Eq. $[[\pi,\pi]]=0$ is equivalent to
$$[\pi^\sharp(\alpha),\pi^\sharp(\beta)]=\ad^*_{\pi^\sharp(\alpha)}\beta-\ad^*_{\pi^\sharp(\beta)}\alpha.$$
This shows that $\pi^\sharp$ is a relative Rota-Baxter operator on $\g$ with respect to the coadjoint representation $(\g^*;\ad^*,-\frkR^*\tau)$. This finishes the proof.
\end{proof}

Let $(\g,\br,,\ltp)$ be a \LYA ~and $\pi\in \wedge^2\g$ invertible. Define $\omega\in \wedge^2\g^*$ to be
\begin{eqnarray}
\label{sympl}\omega(x,y):=\langle (\pi^\sharp)^{-1}(x),y\rangle,\quad \forall x,y\in \g.
\end{eqnarray}

\begin{pro}\label{symp}
 With the above notations, $\pi$ is a Lie-Yamaguti $r$-matrix if and only if $\omega\in \wedge^2\g^*$ defined by \eqref{sympl} is a symplectic structure on $\g$.
\end{pro}
\begin{proof}
It is straightforward.
\end{proof}
Proposition \ref{symp} reveals the fact that an $r$-matrix can be seen as a symplectic structure. Thus we have the following corollary.
\begin{cor}\label{cor:sym}
Let $(\g,\br,,\ltp)$ be a \LYA ~and $\pi\in \wedge^2\g$ a invertible $r$-matrix. Then there exists a pre-\LYA ~structure $(*_\g,\{\cdot,\cdot,\cdot\}_\g)$ on $\g$ given by
\begin{eqnarray*}
\omega(x*{_\g}y,z)&=&-\omega(y,[x,z]),\\
\omega(\{x,y,z\}_\g,w)&=&\omega(x,\Courant{w,z,y}),\quad \forall x,y,z,w\in \g,
\end{eqnarray*}
where $\omega$ is given by \eqref{sympl}.
\end{cor}
\begin{proof}
It follows from Proposition \ref{symp} and Proposition 4.4 in \cite{SZ1} directly.
\end{proof}}

\emptycomment{
\begin{defi}\cite{SZ1}
 A {\bf symplectic \LYA} is a pair $\Big((\g,[\cdot,\cdot],\Courant{\cdot,\cdot,\cdot}),\pi\Big)$, where $(\g,[\cdot,\cdot],\Courant{\cdot,\cdot,\cdot})$ is a Lie-Yamaguti algebra, and  $\pi \in \wedge^2\g^*$ is a symplectic structure, i.e., $\pi$ is a nondegenerate, skew-symmetric bilinear form satisfying
 \begin{eqnarray}
 ~ &&\label{syml}\pi(x,[y,z])+\pi(y,[z,x])+\pi(z,[x,y])=0,\\
 ~ &&\label{sym2}\pi(z,\Courant{x,y,w})-\pi(x,\Courant{w,z,y})+\pi(y,\Courant{w,z,x})-\pi(w,\Courant{x,y,z})=0, \quad \forall x,y,z\in \g.
 \end{eqnarray}
 \end{defi}

 An equivalent description of a bilinear form $\pi\in \wedge^2\g^*$ being a symplectic structure is that $\pi^\sharp:\g^*\to \g$ is a relative Rota-Baxter operator on $\g$ with respect to the coadjoint representation, i.e., for all $\alpha,\beta,\gamma\in \g^*,$
 \begin{eqnarray*}
~[\pi^\sharp(\alpha),\pi^\sharp(\beta)]&=&\pi^\sharp\Big(\ad^*_{\pi^\sharp(\alpha)}\beta-\ad^*_{\pi^\sharp(\beta)}\alpha\Big),\\
\Courant{\pi^\sharp(\alpha),\pi^\sharp(\beta),\pi^\sharp(\gamma)}&=&\pi^\sharp\Big(\frkL^*(\pi^\sharp(\alpha),\pi^\sharp(\beta))\gamma
-\frkR^*(\pi^\sharp(\gamma),\pi^\sharp(\beta))\alpha+\frkR^*(\pi^\sharp(\gamma),\pi^\sharp(\alpha))\beta\Big),
\end{eqnarray*}
where the induced invertible map $\pi^\sharp:\g^*\to \g$ is given by $\pi(x,y)=\langle(\pi^\sharp)^{-1}(x),y\rangle$. See \cite{SZ1} for more details.}

First, we introduce the notions of dual Nijenhuis structures and relative Rota-Baxter-dual-Nijenhuis structures on \LYA s.

\begin{defi}
Let $\Big((\g,\br,,\ltp),(V;\rho,\mu)\Big)$ be a \LYP pair, $N\in \gl(\g)$, $S\in \gl(V)$, and $T:V\longrightarrow \g$ a relative Rota-Baxter operator on $\g$ with respect to $(V;\rho,\mu)$.
\begin{itemize}
\item [\rm (i)]  A pair $(N,S)$ is called a {\bf dual Nijenhuis structure} if $(N,S^*)$ is a Nijenhuis structure on the \LYP pair $\Big((\g,\br,,\ltp),(V^*;\rho^*,-\mu^*\tau)\Big)$.
\item [\rm (ii)] If moreover, $T$ and dual Nijenhuis structure $(N,S)$ satifies Eqs. \eqref{ON1}-\eqref{ON3}, the triple $(T,S,N)$ is called a {\bf relative Rota-Baxter-dual-Nijenhuis structure} on \LYP pair $\Big((\g,\br,,\ltp),(V;\rho,\mu)\Big)$.
\end{itemize}
\end{defi}

Note that a pair $(N,S)$ is a dual Nijenhuis structure is equivalent to that $N$ is a Nijenhuis operator on $(\g,\br,,\ltp)$ and for all $x,y\in \g,v\in V$, the following conditions are satisfied:
\begin{eqnarray*}
\rho(Nx)(Sv)&=&S\big(\rho(Nx)v-\rho(x)(Sv)\Big)+\rho(x)S^2v,\\
\mu(Nx,Ny)(Sv)&=&S\Big(\mu(Nx,Ny)v-\mu(Nx,y)(Sv)-\mu(x,Ny)(Sv)+\mu(x,y)(S^2v)\Big)\\
~ &&+\mu(Nx,y)(S^2v)+\mu(x,Ny)(S^2v)-\mu(x,y)(S^3v).
\end{eqnarray*}

\begin{ex}
Let $N$ be a Nijenhuis operator on a \LYA ~$(\g,\br,,\ltp)$. Then the pair $(N,N^*)$ is a dual Nijenhuis structure on the \LYP pair $\Big((\g,\br,,\ltp),(\g^*;\ad^*,-\frkR^*\tau)\Big)$.
\end{ex}

\begin{rmk}
Note that in the context of Lie algebras or associative algebras, a relative Rota-Baxter-Nijenhuis structure $(T,N,S)$ is also a relative Rota-Baxter-dual-Nijenhuis structure under the condition that $T$ is invertible. However, this property fails in the the context of $3$-Lie algebras or \LYA s even though $T$ is invertible. This is another different contents of ternary operations.
\end{rmk}

The notion of the classical Lie-Yamaguti $r$-matrices was introduced in \cite{ZQ2}, and we call it an $r$-matrix in the sequel. Moreover, we proved that a skew-symmetric $2$-tensor $\pi\in \wedge^2\g$ is an $r$-matrix if and only if the induced map $\pi^\sharp:\g^*\longrightarrow\g$ is a relative Rota-Baxter operator with respect to the coadjoint representation, where $\pi^\sharp$ is defined to be
\begin{eqnarray}
\pair{\pi^\sharp(\alpha),\beta}=\pi(\alpha,\beta),\quad\forall \alpha,\beta\in \g^*.\label{pisharp}
\end{eqnarray}

In the following, we introduce the notion of $r$-matrix-Nijenhuis structures on \LYA s.
\begin{defi}
Let $(\g,\br,,\ltp)$ be a \LYA, $\pi\in \wedge^2\g$ an $r$-matrix, and $N:\g\longrightarrow \g$ a Nijenhuis operator on $\g$. A pair $(\pi,N)$ is called an {\bf $r$-matrix-Nijenhuis structure} on $\g$ if for all $\alpha,\beta,\gamma\in \g^*$, they satisfy
\begin{eqnarray}
N\circ \pi^\sharp&=&\pi^\sharp\circ N^*,\\
~[\alpha,\beta]^{N\circ \pi^\sharp}&=&[\alpha,\beta]^{\pi^\sharp}_{N^*},\\
~\Courant{\alpha,\beta,\gamma}^{N\circ \pi^\sharp}&=&\Courant{\alpha,\beta,\gamma}^{\pi^\sharp}_{N^*},
\end{eqnarray}
where $\pi^\sharp$ is given by \eqref{pisharp}.
\end{defi}

\begin{rmk}
 It is easy to see that an $r$-matrix-Nijenhuis structure $(\pi,N)$ on $\g$ is a special relative Rota-Baxter-dual-Nijenhuis structure on the \LYP pair $\Big((\g,\br,,\ltp),(\g^*,\ad^*,-\frkR^*\tau)\Big)$:
 $$\Big(T=\pi^\sharp,S=N^*,N\Big).$$
\end{rmk}
\emptycomment{
From Proposition \ref{pro:Nij} and Proposition \ref{rO}, we have the following corollary.
\begin{cor}\label{cor:Nij}
Let $(\pi,N)$ be a strong $r$-matrix-Nijenhuis structure on a \LYA ~$(\g,\br,,\ltp)$. Then $\pi$ and $\pi_N$ are compatible $r$-matrices in the sense that any linear combination of $\pi$ and $\pi_N$ is still an $r$-matrix, where $\pi_N\in \wedge^2\g$ is defined to be
$$\pi_N(\alpha,\beta)=\langle N\circ\pi^\sharp(\alpha),\beta\rangle, \quad \forall \alpha,\beta\in \g^*.$$
\end{cor}}
\emptycomment{
By Proposition \ref{pro:thm}, Corollary \ref{cor:sym}, and Corollary \ref{cor:Nij}, we also have the following corollary.
\begin{cor}
Let $(\pi,N)$ be a strong $r$-matrix-Nijenhuis structure on a \LYA ~$(\g,\br,,\ltp)$. Suppose that $\pi$ and $N$ are invertible, then there exists a pair of compatible pre-\LYA ~structures on $\g$ defined to be for all $x,y,z,w\in \g,$
\begin{eqnarray}
\omega(x*_1y,z)=\omega(y,[x,z]),&\quad&\omega(\{x,y,z\}_1,w)=\omega(x,\Courant{w,z,y}),\\
\label{corcom}\omega(x*_2y,z)=\omega(N^{-1}(y),[Nz,x]),&\quad&\omega(\{x,y,z\}_2,w)=-\omega(N^{-1}(x),\Courant{Nw,z,y}),
\end{eqnarray}
where $\omega$ is given by \eqref{sympl}.
\end{cor}
\begin{proof}
Since $(\pi,N)$ is strong, by Corollary \ref{cor:Nij}, we have that $T_1=\pi^\sharp$ and $T_2=N\circ\pi^\sharp$ are compatible relative Rota-Baxter operators on $\g$ with respect to $(\g^*;\ad^*,-\frkR^*\tau)$. By Proposition \ref{pro:thm}, there is a pair of compatible pre-\LYA ~structures on $\g$:
\begin{eqnarray*}
x*_1y=T_1\ad_xT_1^{-1}(y), &\quad& x*_2y=T_2\ad_xT_2^{-1}(y),\\
\{x,y,z\}_1=T_1\frkR^*(z,y)T_1^{-1}(x), &\quad& \{x,y,z\}_2=T_2\frkR^*(z,y)T_2^{-1}(x),\quad \forall x,y,z\in \g.
\end{eqnarray*}
We only show the proof of Eqs. \eqref{corcom} since the others can be proved similarly.
For all $x,y,z,w\in \g$, we have that
\begin{eqnarray*}
\omega(x*_2y,z)&=&\langle T_1^{-1}\Big(T_2\ad_x^*T_2^{-1}(y)\Big),z\rangle=\langle\ad_x^*T_2^{-1}(y),T_2T_1^{-1}(z)\rangle\\
~ &=&\langle\ad^*_xT_2^{-1}(y),Nz\rangle=-\langle T_2^{-1}(y),[x,Nz]\rangle\\
~ &=&\langle T_1^{-1}T_1T_2^{-1}(y),[Nz,x]\rangle=\langle T_1^{-1}N^{-1}(y),[Nz,x]\rangle\\
~ &=&\omega(N^{-1}(y),[Nz,x]),
\end{eqnarray*}
Similarly, we also have that
\begin{eqnarray*}
\omega(\{x,y,z\}_2,w)&=&\langle T_1^{-1}\Big(T_2\frkR^*(z,y)T_2^{-1}(x)\Big),w\rangle=\langle\frkR^*(z,y)T_2^{-1}(x),T_2T_1^{-1}(w)\rangle\\
~ &=&\langle\frkR^*(z,y)T_2^{-1}(x),Nw\rangle=-\langle T_2^{-1}(x),\Courant{Nw,z,y}\rangle\\
~ &=&-\langle T_1^{-1}T_1T_2^{-1}(x),\Courant{Nw,z,y}\rangle=-\langle T_1^{-1}N^{-1}(x),\Courant{Nw,z,y}\rangle\\
~ &=&-\omega(N^{-1}(x),\Courant{Nw,z,y}).
\end{eqnarray*}
This finishes the proof.
\end{proof}
}
Similar to the case of $r$-matrix-Nijenhuis structures, we give the notion of Rota-Baxter-Nijenhuis structures as follows.

\begin{defi}
Let $(\g,\br,,\ltp)$ be a \LYA. Suppose that $R:\g\to \g$ is a Rota-Baxter operator and $N:\g\to \g$ is a Nijenhuis operator on $\g$. A pair $(R,N)$ is a {\bf Rota-Baxter-Nijenhuis structure} on $\g$ if for all $x,y,z\in \g$, they satisfy
\begin{eqnarray}
N\circ R&=&R\circ N,\label{rn1}\\
~[x,y]^{N\circ R}&=&[x,y]^{R}_{N},\label{rn2}\\
~\Courant{x,y,z}^{N\circ R}&=&\Courant{x,y,z}^{R}_{N},\label{rn3}
\end{eqnarray}
\end{defi}

\begin{rmk}
 It is not hard to see that a Rota-Baxter-Nijenhuis structure $(R,N)$ on $\g$ is a special relative Rota-Baxter-Nijenhuis structure on the \LYP pair $\Big((\g,\br,,\ltp),(\g;\ad,\frkR)\Big)$:
$$\Big(T=R,S=N,N\Big).$$
\end{rmk}

\begin{defi}(\cite{Kikkawa})
A {\bf quadratic Lie-Yamaguti algebra} is a pair $\Big((\g,\br,,\ltp),B\Big)$, where $(\g,\br,,\ltp)$ is a Lie-Yamaguti algebra, and $B \in \otimes^2\g^*$ is a nondegenerate symmetric bilinear form satisfying the following invariant conditions
\begin{eqnarray*}
\label{invr1}B([x,y],z)&=&-B(y,[x,z]),\\
\label{invr2}B(\Courant{x,y,z},w)&=&B(x,\Courant{w,z,y}), \quad \forall x,y,z \in \g.
\end{eqnarray*}
\end{defi}

Then $B$ induces a bijection $B^\sharp:\g^*\to \g$ defined by
$$B(x,y)=\langle(B^\sharp)^{-1}(x),y\rangle,\quad \forall x,y\in\g.$$

In \cite{SZ1}, by the invariance of $B$, it is proved that
\begin{eqnarray}
\label{inva1}B^\sharp\Big(\ad^*_x\alpha\Big)&=&\ad_x\Big(B^\sharp(\alpha)\Big),\label{inv1}\\
~\label{inva2}-B^\sharp\Big(\frkR^*(y,x)\alpha\Big)&=&\frkR(x,y)\Big(B^\sharp(\alpha)\Big),\quad \forall \alpha\in \g^*.\label{inv2}
\end{eqnarray}

Moreover, it is easy to see that
\begin{eqnarray}
\label{inva3}B^\sharp\Big(\frkL^*(x,y)\alpha\Big)&=&\frkL(x,y)\Big(B^\sharp(\alpha)\Big),\quad \forall \alpha\in \g^*.\label{inv3}
\end{eqnarray}

A {\bf skew-symmetric endomorphism} of $(\g,B)$ is a linear map $R:\g\to \g$ such that $R\circ B^\sharp:\g^*\to \g$ is skew-symmetric in the sense that
\begin{eqnarray*}
 \langle\alpha,R\circ B^\sharp(\beta)\rangle+\langle\beta,R\circ B^\sharp(\alpha)\rangle=0,\quad \forall \alpha, \beta \in \g^*,\label{skew}
 \end{eqnarray*}

The following theorem illustrates the relationship between $r$-matrix-Nijenhuis structures and Rota-Baxter-Nijenhuis structures.

\begin{thm}
 Let $\Big((\g,\br,,\ltp),B\Big)$ be a quadratic \LYA, $N:\g \longrightarrow \g$ a Nijenhuis operator on $\g$ and $R:\g\longrightarrow\g$ a skew-symmetric endomorphism of $(\g,B)$. Set $\pi^\sharp=R\circ B^\sharp$ and assume that $B$ and $N$ are compatible, i.e.,
\begin{eqnarray}
B^\sharp\circ N^*=N\circ B^\sharp.\label{app}
\end{eqnarray}
Then we have
\begin{itemize}
\item [\rm (i)] If $(R,N)$ is a Rota-Baxter-Nijenhuis structure on $\g$, then $(\pi,N)$ is an $r$-matrix-Nijenhuis structure on $\g$.
\item [\rm (ii)] If $(\pi,N)$ is an $r$-matrix-Nijenhuis structure on the \LYA~ $\g$, then the pair $(R=\pi^\sharp\circ (B^\sharp)^{-1},N)$ is a Rota-Baxter-Nijenhuis structure on $\g$.
\end{itemize}
\end{thm}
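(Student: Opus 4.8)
The plan is to use the bijection $B^\sharp:\g^*\to\g$ as a dictionary that carries every ingredient of an $r$-matrix-Nijenhuis structure on the coadjoint pair $\big((\g,\br,,\ltp),(\g^*;\ad^*,-\frkR^*\tau)\big)$ to the corresponding ingredient of a Rota-Baxter-Nijenhuis structure on $\g$, and back; once the dictionary is in place, (i) and (ii) are mirror images of one another.

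First I would establish the basic intertwining identities. Put $x=B^\sharp\alpha$, $y=B^\sharp\beta$, $z=B^\sharp\gamma$; since $\pi^\sharp=R\circ B^\sharp$ we have $\pi^\sharp\alpha=Rx$, and so on. Applying $B^\sharp$ to the sub-adjacent brackets $[\alpha,\beta]^{\pi^\sharp}$ and $\Courant{\alpha,\beta,\gamma}^{\pi^\sharp}$ (given by \eqref{induce1}--\eqref{induce2} for the coadjoint representation) and using the invariance relations---\eqref{inv1} to replace $\ad^*$ by $\ad$, and \eqref{inv2}, \eqref{inv3} to replace $\frkR^*,\frkL^*$ by $\frkR,\frkL$ (the sign in \eqref{inv2} exactly compensating the switch $\tau$ in $\mu=-\frkR^*\tau$)---and then collecting terms yields
\begin{eqnarray*}
B^\sharp\big([\alpha,\beta]^{\pi^\sharp}\big)&=&[x,y]^R,\\
B^\sharp\big(\Courant{\alpha,\beta,\gamma}^{\pi^\sharp}\big)&=&\Courant{x,y,z}^R.
\end{eqnarray*}
Thus $B^\sharp$ is an isomorphism of Lie-Yamaguti algebras from the sub-adjacent $(\g^*,[\cdot,\cdot]^{\pi^\sharp},\Courant{\cdot,\cdot,\cdot}^{\pi^\sharp})$ onto $(\g,[\cdot,\cdot]^R,\Courant{\cdot,\cdot,\cdot}^R)$. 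Since the bracket formulas \eqref{induce1}--\eqref{induce2} are defined for an arbitrary linear map (the Rota-Baxter property is never used in this step), the identical computation with $\pi^\sharp$ replaced by $N\circ\pi^\sharp$ and $R$ replaced by $N\circ R$ gives $B^\sharp([\alpha,\beta]^{N\circ\pi^\sharp})=[x,y]^{N\circ R}$ and $B^\sharp(\Courant{\alpha,\beta,\gamma}^{N\circ\pi^\sharp})=\Courant{x,y,z}^{N\circ R}$.

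Next I would transport the Nijenhuis deformations. Applying $B^\sharp$ to $[\alpha,\beta]^{\pi^\sharp}_{N^*}=[N^*\alpha,\beta]^{\pi^\sharp}+[\alpha,N^*\beta]^{\pi^\sharp}-N^*[\alpha,\beta]^{\pi^\sharp}$, then using the intertwining identity above together with the compatibility $B^\sharp\circ N^*=N\circ B^\sharp$ from \eqref{app} to move $B^\sharp$ past $N^*$, gives $B^\sharp([\alpha,\beta]^{\pi^\sharp}_{N^*})=[x,y]^R_N$; the same manipulation on the trilinear deformation $\Courant{\cdot,\cdot,\cdot}^{\pi^\sharp}_{N^*}$ gives $B^\sharp(\Courant{\alpha,\beta,\gamma}^{\pi^\sharp}_{N^*})=\Courant{x,y,z}^R_N$. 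Finally, \eqref{app} gives $N\circ\pi^\sharp=N\circ R\circ B^\sharp$ and $\pi^\sharp\circ N^*=R\circ N\circ B^\sharp$, so invertibility of $B^\sharp$ makes $N\circ\pi^\sharp=\pi^\sharp\circ N^*$ equivalent to $N\circ R=R\circ N$, matching the first defining equation \eqref{rn1}.

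With this dictionary the theorem follows by pushing the defining equations through $B^\sharp$. For (i): as $R$ is Rota-Baxter, $[Rx,Ry]=R([x,y]^R)$, which via the first intertwining identity reads $[\pi^\sharp\alpha,\pi^\sharp\beta]=\pi^\sharp([\alpha,\beta]^{\pi^\sharp})$ (and similarly in the ternary slot), so $\pi^\sharp$ is a relative Rota-Baxter operator for the coadjoint representation and hence $\pi\in\wedge^2\g$ (skew-symmetry of $\pi^\sharp=R\circ B^\sharp$ is the skew-symmetry of $R$) is an $r$-matrix; the conditions \eqref{rn2}, \eqref{rn3}, transported through $B^\sharp$ by the identities above, become exactly $[\alpha,\beta]^{N\circ\pi^\sharp}=[\alpha,\beta]^{\pi^\sharp}_{N^*}$ and $\Courant{\alpha,\beta,\gamma}^{N\circ\pi^\sharp}=\Courant{\alpha,\beta,\gamma}^{\pi^\sharp}_{N^*}$, while \eqref{rn1} yields $N\circ\pi^\sharp=\pi^\sharp\circ N^*$, so $(\pi,N)$ is an $r$-matrix-Nijenhuis structure. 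Part (ii) is the same argument run in reverse through $(B^\sharp)^{-1}$: $\pi$ an $r$-matrix forces $R=\pi^\sharp\circ(B^\sharp)^{-1}$ to be Rota-Baxter, and the three $r$-matrix-Nijenhuis conditions pull back to \eqref{rn1}--\eqref{rn3}. All computations are routine; the one place demanding care is the ternary intertwining identity, where the switch $\tau$ in $\mu=-\frkR^*\tau$ and the opposite signs of \eqref{inv2} and \eqref{inv3} must be matched against the asymmetric three slots of $\Courant{\cdot,\cdot,\cdot}^{\pi^\sharp}$ and of its $N^*$-deformation---this bookkeeping is the main (though purely mechanical) obstacle.
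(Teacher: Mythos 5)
Your proposal is correct and follows essentially the same route as the paper's proof: both hinge on the invariance identities \eqref{inv1}--\eqref{inv3} to establish the transport identities $B^\sharp([\alpha,\beta]^{\pi^\sharp})=[x,y]^R$ and $B^\sharp(\Courant{\alpha,\beta,\gamma}^{\pi^\sharp})=\Courant{x,y,z}^R$ (the paper's \eqref{con1}--\eqref{con2}), use the compatibility \eqref{app} to move $N^*$ past $B^\sharp$, and then push the conditions \eqref{rn1}--\eqref{rn3} through $B^\sharp$ to obtain the $r$-matrix-Nijenhuis conditions and back. The only cosmetic difference is ordering: the paper first verifies directly that $\pi^\sharp$ is a relative Rota-Baxter operator for the coadjoint representation and then derives the transport identities, whereas you set up the dictionary first and read everything off it.
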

\begin{proof}
First, by \eqref{rn1} and \eqref{app}, it is sasy to see that $N\circ \pi^\sharp=\pi^\sharp\circ N^*$.
Next, we show that $\pi^\sharp:=R\circ B^\sharp$ is a relative Rota-Baxter operator with respect to the coadjoint representation, which is equivalent to that $\pi\in \wedge^2\g$ is an $r$-matrix. Indeed, by the invariance of $B$, we have
\begin{eqnarray*}
~ &&\pi^\sharp\Big(\ad^*_{\pi^\sharp(\alpha)}\beta-\ad^*_{\pi^\sharp(\beta)}\alpha\Big)\\
~ &=&R\circ B^\sharp\Big(\ad^*_{\pi^\sharp(\alpha)}\beta-\ad^*_{\pi^\sharp(\beta)}\alpha\Big)\\
~ &\stackrel{\eqref{inva1}}=&R\Big(\ad_{\pi^\sharp(\alpha)}B^\sharp(\beta)-\ad_{\pi^\sharp(\beta)}B^\sharp(\alpha)\Big)\\
~ &=&R\Big([R\circ B^\sharp(\alpha),B^\sharp(\beta)]+[B^\sharp(\alpha),R\circ B^\sharp(\beta)]\Big)\\
~ &=&[R\circ B^\sharp(\alpha),R\circ B^\sharp(\beta)]=[\pi^\sharp(\alpha),\pi^\sharp(\beta)],
\end{eqnarray*}
and similarly
\begin{eqnarray*}
~ &&\pi^\sharp\Big(\frkL^*(\pi^\sharp(\alpha),\pi^\sharp(\beta))\gamma-\frkR^*(\pi^\sharp(\gamma),\pi^\sharp(\beta))\alpha
+\frkR^*(\pi^\sharp(\gamma),\pi^\sharp(\alpha))\beta\Big)\\
~ &=&R\Big(\Courant{\pi^\sharp(\alpha),\pi^\sharp(\beta),B^\sharp(\gamma)}+\Courant{B^\sharp(\alpha),\pi^\sharp(\beta),\pi^\sharp(\gamma)}
-\Courant{B^\sharp(\beta),\pi^\sharp(\alpha),\pi^\sharp(\gamma)}\Big)\\
~ &=&\Courant{R\circ B^\sharp(\alpha),R\circ B^\sharp(\beta),R\circ B^\sharp(\gamma)}=\Courant{\pi^\sharp(\alpha),\pi^\sharp(\beta),\pi^\sharp(\gamma)}.
\end{eqnarray*}
What is left is to show $[\alpha,\beta]^{N\circ \pi^\sharp}=[\alpha,\beta]^{\pi^\sharp}_{N^*}$ and $\Courant{\alpha,\beta,\gamma}^{N\circ \pi^\sharp}=\Courant{\alpha,\beta,\gamma}^{\pi^\sharp}_{N^*}$.

Let $\alpha=(B^\sharp)^{-1}(x),~\beta=(B^\sharp)^{-1}(y),~\gamma=(B^\sharp)^{-1}(z)$, by \eqref{inv1}-\eqref{inv3}, we have
\begin{eqnarray*}
~[\alpha,\beta]^{\pi^\sharp}&=&\ad^*_{\pi^\sharp((B^\sharp)^{-1}(x))}(B^\sharp)^{-1}(y)-\ad^*_{\pi^\sharp((B^\sharp)^{-1}(y))}(B^\sharp)^{-1}(x)\\
~ &\stackrel{\eqref{inva1}}=&\ad^*_{R(x)}(B^\sharp)^{-1}(y)-\ad^*_{R(y)}(B^\sharp)^{-1}(x)\\
~ &=&(B^\sharp)^{-1}\Big([R(x),y]-[R(y),x]\Big)=(B^\sharp)^{-1}[x,y]^R,
\end{eqnarray*}
and
\begin{eqnarray*}
~ &&\Courant{\alpha,\beta,\gamma}^{\pi^{\sharp}}\\
~ &=&\frkL^*\Big(\pi^\sharp(B^\sharp)^{-1}(x),\pi^\sharp(B^\sharp)^{-1}(y)\Big)\Big((B^\sharp)^{-1}(z)\Big)
-\frkR^*\Big(\pi^\sharp(B^\sharp)^{-1}(z),\pi^\sharp(B^\sharp)^{-1}(y)\Big)\Big((B^\sharp)^{-1}(x)\Big)\\
~ &&+\frkR^*\Big((\pi^\sharp(B^\sharp)^{-1}(z),\pi^\sharp(B^\sharp)^{-1}(x)\Big)\Big((B^\sharp)^{-1}(y)\Big)\\
~ &=&\frkL^*\Big(R(x),R(y)\Big)\Big((B^\sharp)^{-1}(z)\Big)-\frkR^*\Big(R(z),R(y)\Big)\Big((B^\sharp)^{-1}(x)\Big)
+\frkR^*\Big(R(z),R(x)\Big)\Big((B^\sharp)^{-1}(y)\Big)\\
~ &\stackrel{\eqref{inva2},\eqref{inva3}}=&(B^\sharp)^{-1}\Big(\Courant{R(x),R(y),z}+\Courant{x,R(y),R(z)}-\Courant{y,R(x),R(z)}\Big)=(B^\sharp)^{-1}\Courant{x,y,z}^R.
\end{eqnarray*}
Thus we have
\begin{eqnarray}
~[(B^\sharp)^{-1}(x),(B^\sharp)^{-1}(y)]^{\pi^\sharp}&=&(B^\sharp)^{-1}[x,y]^R,\label{con1}\\
\Courant{(B^\sharp)^{-1}(x),(B^\sharp)^{-1}(y),(B^\sharp)^{-1}(z)}^{\pi^{\sharp}}&=&(B^\sharp)^{-1}\Courant{x,y,z}^R.\label{con2}
\end{eqnarray}
Hence, by \eqref{app}, \eqref{con1}, and \eqref{con2},  we have
\begin{eqnarray*}
~ &&[\alpha,\beta]^{N\circ \pi^\sharp}-[\alpha,\beta]^{\pi^\sharp}_{N^*}
=[\alpha,\beta]^{N\circ \pi^\sharp}-\Big([N^*(\alpha),\beta]^{\pi^\sharp}+[\alpha,N^*(\beta)]^{\pi^\sharp}-N^*[\alpha,\beta]^{\pi^\sharp}\Big)\\
~ &\stackrel{\eqref{con1}}=&(B^\sharp)^{-1}\Big([x,y]^{N\circ R}\Big)-\Big([N^*((B^{\sharp})^{-1}(x)),(B^{\sharp})^{-1}(y)]^{\pi^\sharp}
+[(B^{\sharp})^{-1}(x),N^*((B^{\sharp})^{-1}(y))]^{\pi^\sharp}\\
~ &&-N^*[(B^{\sharp})^{-1}(x),(B^{\sharp})^{-1}(y)]^{\pi^\sharp}\Big)\\
~ &\stackrel{\eqref{app}}=&(B^\sharp)^{-1}\Big([x,y]^{N\circ R}\Big)-\Big([(B^{\sharp})^{-1}N(x),(B^{\sharp})^{-1}(y)]^{\pi^\sharp}
+[(B^{\sharp})^{-1}(x),(B^{\sharp})^{-1}N(y)]^{\pi^\sharp}\\
~ &&-N^*[(B^{\sharp})^{-1}(x),(B^{\sharp})^{-1}(y)]^{\pi^\sharp}\Big)\\
~ &\stackrel{\eqref{con1}}=&(B^\sharp)^{-1}\Big([x,y]^{N\circ R}\Big)-\Big((B^{\sharp})^{-1}[Nx,y]^R+(B^{\sharp})^{-1}[x,Ny]^R-N^*(B^\sharp)^{-1}[x,y]^R\Big)\\
~ &=&(B^\sharp)^{-1}\Big([x,y]^{N\circ R}\Big)-(B^{\sharp})^{-1}\Big([Nx,y]^R+[x,Ny]^R-N[x,y]^R\Big)\\
~ &=&(B^\sharp)^{-1}\Big([x,y]^{N\circ R}-[x,y]^R_N\Big)=0,
\end{eqnarray*}
and
\begin{eqnarray*}
~ &&\Courant{\alpha,\beta,\gamma}^{N\circ \pi^\sharp}-\Courant{\alpha,\beta,\gamma}^{\pi^\sharp}_{N^*}\\
~ &=&\Courant{\alpha,\beta,\gamma}^{N\circ \pi^\sharp}
-\Big(\Courant{N^*(\alpha),N^*(\beta),\gamma}^{\pi^\sharp}+\Courant{N^*(\alpha),\beta,N^*(\gamma)}^{\pi^\sharp}
+\Courant{\alpha,N^*(\beta),N^*(\gamma)}^{\pi^\sharp}\\
~ &&-N^*\big(\Courant{N^*(\alpha),\beta,\gamma}^{\pi^\sharp}+\Courant{\alpha,N^*(\beta),\gamma}^{\pi^\sharp}
+\Courant{\alpha,\beta,N^*(\gamma)}^{\pi^\sharp}\big)+(N^*)^2\Courant{\alpha,\beta,\gamma}^{\pi^\sharp}\Big)\\
~ &\stackrel{\eqref{con2}}=&(B^\sharp)^{-1}\Courant{x,y,z}^{N\circ R}
-\Big(\Courant{(B^\sharp)^{-1}N(x),(B^\sharp)^{-1}N(y),(B^\sharp)^{-1}(z)}^{\pi^\sharp}\\
~ &&+\Courant{(B^\sharp)^{-1}N(x),(B^\sharp)^{-1}(y),(B^\sharp)^{-1}N(z)}^{\pi^\sharp}
+\Courant{(B^\sharp)^{-1}(x),(B^\sharp)^{-1}N(y),(B^\sharp)^{-1}N(z)}^{\pi^\sharp}\\
~ &&-N^*\big(\Courant{(B^\sharp)^{-1}N(x),(B^\sharp)^{-1}(y),(B^\sharp)^{-1}(z)}^{\pi^\sharp}
+\Courant{(B^\sharp)^{-1}(x),(B^\sharp)^{-1}N(y),(B^\sharp)^{-1}(z)}^{\pi^\sharp}\\
~ &&+\Courant{(B^\sharp)^{-1}(x),(B^\sharp)^{-1}(y),(B^\sharp)^{-1}N(z)}^{\pi^\sharp}\big)
+(N^*)^2\Courant{(B^\sharp)^{-1}(x),(B^\sharp)^{-1}(y),(B^\sharp)^{-1}(z)}^{\pi^\sharp}\Big)\\
~ &\stackrel{\eqref{con2}}=&(B^\sharp)^{-1}\Courant{x,y,z}^{N\circ R}-\Big((B^\sharp)^{-1}\Courant{N(x),N(y),z}^R+(B^\sharp)^{-1}\Courant{N(x),y,N(z)}^R+(B^\sharp)^{-1}\Courant{x,N(y),N(z)}^R\\
~&&-N^*\circ (B^\sharp)^{-1}\big(\Courant{N(x),y,z}^R+\Courant{x,N(y),z}^R+\Courant{x,y,N(z)}^R\big)+(N^*)^2(B^\sharp)^{-1}\Courant{x,y,z}^R\Big)\\
~ &=&(B^\sharp)^{-1}\Courant{x,y,z}^{N\circ R}-(B^\sharp)^{-1}\Big(\Courant{N(x),N(y),z}^R+\Courant{N(x),y,N(z)}^R+\Courant{x,N(y),N(z)}^R\\
~ &&-N\big(\Courant{N(x),y,z}^R+\Courant{x,N(y),z}^R+\Courant{x,y,N(z)}^R\big)+N^2\Courant{x,y,z}^R\Big)\\
~ &=&(B^\sharp)^{-1}\Courant{x,y,z}^{N\circ R}-(B^\sharp)^{-1}\Courant{x,y,z}^R_N=(B^\sharp)^{-1}\Big(\Courant{x,y,z}^{N\circ R}-\Courant{x,y,z}^R_N\Big)=0,
\end{eqnarray*}
which implies that
$$[\alpha,\beta]^{N\circ \pi^\sharp}=[\alpha,\beta]^{\pi^\sharp}_{N^*} \quad{\rm and}\quad \Courant{\alpha,\beta,\gamma}^{N\circ \pi^\sharp}=\Courant{\alpha,\beta,\gamma}^{\pi^\sharp}_{N^*}.$$
Thus $(\pi,N)$ is an $r$-matrix-Nijenhuis structure. This proves (i).

(ii) can be proved similarly.
\end{proof}

We give some examples of Rota-Baxter-Nijenhuis structures on \LYA s to end up this section.

\begin{ex}
Let $\g=C^\infty([0,1])$ endowed with the following operations
\begin{eqnarray*}
[f,g](x)&=&f(x)g'(x)-g(x)f'(x),\\
\Courant{f,g,h}(x)&=&f(x)g'(x)h'(x)-g(x)f'(x)h'(x)-h(x)\Big(f(x)g''(x)-g(x)f''(x)\Big),\quad \forall x\in [0,1],
\end{eqnarray*}
for all $ f,g,h\in \g$. Then $(\g,\br,,\ltp)$ forms a \LYA. For all $f\in C^\infty([0,1])$, the {\em integral operator} $R:\g\longrightarrow\g$ defined to be
$$R(f)(x):=\int_0^xf(t){\mathrm d}t,\quad\forall x\in [0,1]$$
is a Rota-Baxter operator on $\g$, and $N:\g\longrightarrow\g$ given by
$$N(f)(x):=\lambda f(x),\quad\forall x\in [0,1]$$
is a Nijenhuis operator for $\lambda\in \mathbb K$.
Then $(R,N)$ is a Rota-Baxter-Nijenhuis structure on $\g$.
\end{ex}

\begin{ex}
Let $(\g,\br,,\ltp)$ be a $2$-dimensional \LYA ~with a basis  $\{e_1,e_2\}$, where $(\br,,\ltp)$ is defined to be
$$[e_1,e_2]=e_1,\quad\Courant{e_1,e_2,e_2}=e_1.$$ Then $(R,N)$ is a Rota-Baxter-Nijenhuis structure on $\g$, where
\begin{eqnarray*}
R=\begin{pmatrix}
 0 & a \\
 0 & 0
 \end{pmatrix},
 \quad N=\begin{pmatrix}
 0 & \lambda \\
 0 & 0
 \end{pmatrix}.
\end{eqnarray*}
\end{ex}
\emptycomment{
\begin{ex}
Let $\g$ be a $4$-dimensional \LYA ~with a basis $\{e_1,e_2,e_3,e_4\}$. The nonzero brackets are given by
$$[e_1,e_2]=2e_4,\quad\Courant{e_1,e_2,e_1}=e_4.$$
Then $(R,N)$ is a Rota-Baxter-Nijenhuis structure on $\g$, where
\begin{eqnarray*}
R=\begin{pmatrix}
 0 & 0 & 0 & 0 \\
 0 & 0 & 0 & 0 \\
 a & 0 & b & 0 \\
 0 & c & 0 & d
 \end{pmatrix},
 \quad N=\begin{pmatrix}
 1 & 0 & 0 & 0 \\
 0 & -1 & 0 & 0 \\
 0 & 0 & 1 & 0 \\
 0 & 0 & 0 & -1
 \end{pmatrix}.
\end{eqnarray*}
\end{ex}}

\emptycomment{
\section*{Appendix}
{\em The proof of Lemma \ref{fundlem}}:
Since there holds
$$D(Nx,Ny)(Sv)=\mu(Ny,Nx)(Sv)-\mu(Nx,Ny)(Sv)+[\rho(Nx),\rho(Ny)](Sv)-\rho([Nx,Ny])(Sv),$$
by using Eq. \eqref{Nijstru1} repeatedly, we calculate
\begin{eqnarray*}
~ &&[\rho(Nx),\rho(Ny)](Sv)\\
~ &=&\rho(Nx)\Big(S\big(\rho(Ny)v\big)+S\big(\rho(y)(Sv)\big)-S^2\big(\rho(y)v\big)\Big)\\
~ &&-\rho(Ny)\Big(S\big(\rho(Nx)v\big)+S\big(\rho(x)(Sv)\big)-S^2\big(\rho(x)v\big)\Big)\\
~ &=&S\Big(\rho(Nx)\rho(Ny)v+\rho(x)\big(S(\rho(Ny)v)\big)\Big)-S^2\Big(\rho(x)\rho(Ny)v\Big)\\
~ &&+S\Big(\rho(Nx)\rho(y)(Sv)+\rho(x)\big(S(\rho(y)(Sv))\big)\Big)-S^2\Big(\rho(x)\rho(y)(Sv)\Big)\\
~ &&-S\Big(\rho(Nx)\big(S(\rho(y)v\big)+\rho(x)\big(S^2(\rho(y)v)\big)\Big)+S^2\Big(\rho(x)\big(S(\rho(y)v\big)\Big)\\
~ &&-S\Big(\rho(Ny)\rho(Nx)v+\rho(y)\big(S(\rho(Nx)v)\big)\Big)+S^2\Big(\rho(y)\rho(Nx)v\Big)\\
~ &&-S\Big(\rho(Ny)\rho(x)(Sv)+\rho(y)\big(S(\rho(x)(Sv))\big)\Big)+S^2\Big(\rho(y)\rho(x)(Sv)\Big)\\
~ &&+S\Big(\rho(Ny)\big(S(\rho(x)v\big)+\rho(y)\big(S^2(\rho(x)v)\big)\Big)-S^2\Big(\rho(y)\big(S(\rho(x)v\big)\Big)\\
~ &=&S\Big(\rho(Nx)\rho(Ny)v+\rho(x)\big(S(\rho(Ny)v)\big)\Big)-S^2\Big(\rho(x)\rho(Ny)v\Big)\\
~ &&+S\Big(\rho(Nx)\rho(y)(Sv)+\rho(x)\big(S(\rho(y)(Sv))\big)\Big)-S^2\Big(\rho(x)\rho(y)(Sv)\Big)\\
~ &&-S^2\Big(\rho(x)\big(S(\rho(y)v)\big)+\rho(Nx)\rho(y)v\Big)+S^3\Big(\rho(x)\rho(y)v\Big)\\
~ &&-S\Big(\rho(x)\big(S^2(\rho(y)v)\big)\Big)+S^2\Big(\rho(x)\big(S(\rho(y)v\big)\Big)\\
~ &&-S\Big(\rho(Ny)\rho(Nx)v+\rho(y)\big(S(\rho(Nx)v)\big)\Big)+S^2\Big(\rho(y)\rho(Nx)v\Big)\\
~ &&-S\Big(\rho(Ny)\rho(x)(Sv)+\rho(y)\big(S(\rho(x)(Sv))\big)\Big)+S^2\Big(\rho(y)\rho(x)(Sv)\Big)\\
~ &&+S^2\Big(\rho(y)\big(S(\rho(x)v)\big)+\rho(Ny)\rho(x)v\Big)-S^3\Big(\rho(y)\rho(x)v\Big)\\
~ &&+S\Big(\rho(y)\big(S^2(\rho(x)v)\big)\Big)-S^2\Big(\rho(y)\big(S(\rho(x)v\big)\Big)\\
~ &=&S\Big([\rho(Nx),\rho(y)](Sv)+[\rho(x),\rho(Ny)](Sv)+[\rho(Nx),\rho(Ny)]v\Big)\\
~ &&-S^2\Big([\rho(Nx),\rho(y)]v+[\rho(x),\rho(Ny)]v+[\rho(x),\rho(y)](Sv)\Big)+S^3\Big([\rho(x),\rho(y)]v\Big),
\end{eqnarray*}
and
\begin{eqnarray*}
~ &&S\Big(\rho([Nx,y])(Sv)+\rho([x,Ny])(Sv)+\rho([Nx,Ny])v\Big)\\
~ &&-S^2\Big(\rho([Nx,y])v+\rho([x,Ny])v+\rho([x,y])(Sv)\Big)\\
~ &&+S^3\Big(\rho([x,y])v\Big)-\rho([Nx,Ny])(Sv)\\
~ &=&S\Big(\rho([Nx,y])(Sv)+\rho([x,Ny])(Sv)+\rho([Nx,Ny])v\Big)\\
~ &&-S^2\Big(\rho([Nx,y])v+\rho([x,Ny])v+\rho([x,y])(Sv)\Big)\\
~ &&+S^3\Big(\rho([x,y])v\Big)-S\Big(\rho([x,y]_N)(Sv)+\rho(N[x,y]_N)v\Big)+S^2\Big(\rho([x,y]_N)v\Big)\\
~ &=&-S^2\Big(\rho(N[x,y])v+\rho([x,y])(Sv)\Big)+S\Big(\rho(N[x,y])(Sv)\Big)+S^3\Big(\rho([x,y])v\Big)\\
~ &=&0.
\end{eqnarray*}
Thus the conclusion follows.
\qed
}

\end{document}